\documentclass[11pt]{article}
\usepackage[margin=1in]{geometry}
\usepackage{amsmath,amssymb,amsthm}
\usepackage{booktabs}
\usepackage{graphicx}
\usepackage{tikz}
\usetikzlibrary{decorations.pathreplacing}
\usepackage[dvipsnames]{xcolor}
\usepackage[colorlinks=true,linkcolor=MidnightBlue,citecolor=MidnightBlue,
            urlcolor=MidnightBlue]{hyperref}
\usepackage{url}

\newtheorem{theorem}{Theorem}[section]
\newtheorem{lemma}[theorem]{Lemma}
\newtheorem{proposition}[theorem]{Proposition}
\newtheorem{corollary}[theorem]{Corollary}
\newtheorem{conjecture}[theorem]{Conjecture}
\theoremstyle{definition}
\newtheorem{definition}[theorem]{Definition}
\theoremstyle{remark}
\newtheorem{remark}[theorem]{Remark}

\newcommand{\OFG}{\mathrm{OFG}}
\newcommand{\dgrid}{d_1}
\newcommand{\Z}{\mathbb{Z}}
\newcommand{\Q}{\mathbb{Q}}
\newcommand{\Mm}{M_{m,n}}
\newcommand{\Gmn}{G_{m,n}}
\newcommand{\Aset}{\mathcal{A}}

\title{One construction for the Miura-ori flip-graph degree sequence}
\author{Chakshu Gupta\\
  \small College of Computing, Georgia Institute of Technology\\
  \small \texttt{cgupta65@gatech.edu}}
\date{}

\begin{document}
\maketitle

\begin{abstract}
The flip graph of an origami crease pattern has the locally flat-foldable
mountain-valley assignments as vertices, and an edge joins two of them that differ
by a single face flip. A basic invariant of this graph is the degree sequence,
which counts the vertices of each degree. On the $m\times n$ Miura-ori,
this sequence is known to be a bivariate polynomial only for small degrees,
each count obtained by a separate argument. This paper gives
one uniform construction that expresses, for every degree $d$, the number of
degree-$d$ vertices as a single symmetric polynomial in $(m,n)$ for all
sufficiently large $m,n$. Its degree in each variable is $d-2$ unconditionally.
Subject to a single degree bound, its total degree is $d-2$ as well,
with top-degree part an explicit multiple of
$m^{d-2}+n^{d-2}$ for $d\ge5$. The bound is proved here when the count splits into
independent row and column factors, and open otherwise. The
region is $m,n\ge\max(d-1,2)$. The
polynomials are given in closed form through $d=10$, unconditional through $d=8$,
where the degree bound holds in every case, and conditional on it beyond.
Below this region the count departs from the polynomial. One step below, this
departure has leading coefficient $-4$ times a Baxter number through $d=11$. Each
such polynomial thus counts the Miura-ori's locally flat-foldable assignments
admitting exactly $d$ single face flips.
\end{abstract}

% --- CT submission metadata: map to ct.sty \keywords{} and \MSC{} at conversion ---
\medskip
\noindent\textbf{Keywords.}
Miura-ori, origami flip graph, degree sequence, quasi-polynomial,
lattice-point enumeration, Baxter numbers.

\noindent\textbf{2020 Mathematics Subject Classification.}
Primary 05A15; Secondary 05C07, 05C30, 52C07.

% ===========================================================================
% REOPENED 2026-08-04: settled range moved d=7 -> d=8, open range d>=8 -> d>=9.
% Introduction RESTRUCTURED + RE-LOCKED 2026-07-12 (Option B, 4 paragraphs:
% field | context/gap | construction | results). Adversary stress-test PASS:
% 0 CRITICAL, 0 WRONG, 2 SUSPICIOUS (acceptable), 3 MINOR; all 18 atomic claims
% + 9 citations of the prior lock preserved, boundary/Baxter sentence ADDED
% (verbatim-in-scope with the abstract). Findings: /tmp/adversary-intro-optB.md.
% Prior lock 2026-06-28 (2-para) + post-fold refresh 2026-07-05 superseded.
% REWRITTEN 2026-08-24 (U2) to the v2 prose standard: removed the displayed
%   Theorem-A block, the author-named related-work paragraph, and the flourish
%   opener (all v3 CT-fix drift). Main result stated in prose; all v3 substance
%   kept (locally flat-foldable, m,n>=2, lem:degextrema, d=8 via prop:converse,
%   open d>=9, subdir codebase URL). Curious+Adversary converged.
% ===========================================================================
\section{Introduction}\label{sec:intro}
% ===========================================================================

A crease pattern's locally flat-foldable mountain-valley assignments are the
vertices of its origami flip graph $\OFG$~\cite{Hull2022maximal}, with an edge
between two that differ by a single face flip~\cite{Akitaya2020faceflips}. A
vertex's degree is then the number of face flips it admits. For the $m\times n$
Miura-ori $\Mm$~\cite{Miura1994map}, a basic invariant of $\OFG(\Mm)$ is its
degree sequence, the number of vertices of each degree.

The count is known for every $d$ at $m=2$~\cite{Christensen2025origami}, and
for $d\le5$ at all sufficiently large $m,n$, where it is a symmetric
polynomial in $(m,n)$ of total degree $d-2$~\cite{Gupta2026companion}. Each was
proved by a separate argument, the first only at $m=2$ and the second only for
$d\le5$. Whether the count is a polynomial for every $d$, and whether one
construction can prove it uniformly, are the questions addressed here.

Both questions are answered by one construction, built on the height-function
reduction of Section~\ref{sec:background}. Under the Ginepro--Hull
bijection~\cite{GineproHull2014counting} and the bipartite height-function
lift~\cite{Cereceda2009mixing}, $\OFG(\Mm)$ is identified for $m,n\ge2$ with the
integer height functions on the $m\times n$ grid, where a vertex's degree equals
its number of strict local extrema (Lemma~\ref{lem:degextrema}). The envelope
encoding (Theorem~\ref{thm:envelope}) represents each height function by an
integer configuration, so counting the configurations with $d$ extrema is a
parametric lattice-point problem whose count is piecewise quasi-polynomial in $(m,n)$ by the
Barvinok--Woods theory~\cite{BarvinokWoods2003,BogartWoods2022}
(Theorem~\ref{thm:quasipoly}). On the high region $m,n\ge\max(d-1,2)$ this
quasi-polynomial collapses to a single symmetric polynomial $p_d(m,n)$, whose
existence, symmetry, region, and per-variable degree $d-2$ are unconditional
(Theorem~\ref{thm:poly}). Subject to a degree bound (Conjecture~\ref{conj:G2}),
its total degree is $d-2$ as well, with top-degree part
$4/(d-2)!\,(m^{d-2}+n^{d-2})$ for $d\ge5$ (Proposition~\ref{prop:leading}).

These polynomials are computed in closed form through $d=10$
(Section~\ref{sec:data}). The degree bound is proved for the separable
configurations (Theorem~\ref{thm:sepcase}), where the count splits into
independent row and column factors. It holds in every case through $d=8$. Direct
enumeration through $d=7$ establishes it in a self-contained
codebase.\footnote{\url{https://github.com/ChakshuGupta13/lab/tree/main/math/gupta2026degseq}}
At $d=8$, the top-degree part of $p_8$, determined without
assuming the bound, forces it (Proposition~\ref{prop:converse}). The
non-separable case remains open for $d\ge9$. When exactly one of $m,n$ falls
below the high region, the count departs from $p_d$ by a polynomial in the
larger variable, determined by the smaller (Proposition~\ref{prop:meta}). One
step below the region, this polynomial's leading coefficient is, through degree
eleven, $-4$ times a Baxter number (Section~\ref{sec:data}).

% ===========================================================================
\section{The Miura-ori and its flip graph}\label{sec:miura}
% Section 2 ADDED 2026-08-02 in response to the Combinatorial Theory desk
%   rejection (not self-contained; no definition of the main object of study).
%   Every claim grounded in ginepro2014-counting-miura-ori.txt (R7-verified):
%   crease pattern + MV assignment l.80-83; herringbone array + angles
%   alpha,alpha,pi-alpha,pi-alpha l.73-75; Kawasaki + the Miura alternating sum
%   l.100-105; Maekawa l.106-108; bird's-foot forcing and the count six
%   l.110-117; the locally-flat-foldable definition l.124-128; the
%   local-not-global counterexample l.530-541 (GH's own "5x1" figure label is
%   NOT reproduced: under Def 2.1 an m x n Miura-ori has (m-1)(n-1) interior
%   degree-four vertices, which is 0 when n=1, so no bird's-foot vertex exists
%   there and the multi-vertex obstruction cannot arise).
% ===========================================================================

Origami folds a flat sheet of paper along straight lines. Unfolded, the sheet
retains those lines as creases, each recording a convex fold as a mountain or a
concave fold as a valley. A \emph{crease pattern} is a connected
graph $C$ embedded in the plane with straight-line edges, its creases. A
\emph{mountain-valley assignment} on $C$ is a map
$\mu\colon E(C)\to\{\mathrm{M},\mathrm{V}\}$~\cite{GineproHull2014counting}.

\begin{definition}[Miura-ori]\label{def:miura}
Fix an angle $\alpha$ with $0<\alpha<\pi/2$ and integers $m,n\ge2$. The $m\times n$ Miura-ori $\Mm$ is
the crease pattern of $mn$ congruent parallelograms in a herringbone array, with
corners $v_{i,j}$ for $0\le i\le m$ and $0\le j\le n$. Its creases are the fold
lines shared by two of the parallelograms, the zigzag row lines and straight
column lines interior to the sheet. These meet four at a time at each of the $(m-1)(n-1)$
interior vertices, at angles $\alpha,\alpha,\pi-\alpha,\pi-\alpha$ in cyclic
order. The sheet's outer boundary carries no
crease~\cite{GineproHull2014counting}. The hypothesis $m,n\ge2$ excludes strips, which have
no interior vertex.
\end{definition}

A concrete instance appears in Figure~\ref{fig:miura}. Neither the angle $\alpha$ nor
the side ratio of the parallelograms figures in what follows, which uses $\Mm$ only
through its combinatorial structure.

\begin{figure}[ht]
\centering
\begin{tikzpicture}[x=1.2cm, y=1.2cm, font=\small]
  \fill[gray!18] (2.3,0.95) -- (3.45,1.23) -- (3.45,2.18) -- (2.3,1.9) -- cycle;
  \foreach \yb in {0.95, 1.9}
    \draw[thick] (0,\yb) -- (1.15,\yb+0.28) -- (2.3,\yb)
                 -- (3.45,\yb+0.28) -- (4.6,\yb);
  \foreach \yb in {0, 2.85}
    \draw[gray!55] (0,\yb) -- (1.15,\yb+0.28) -- (2.3,\yb)
                 -- (3.45,\yb+0.28) -- (4.6,\yb);
  \draw[thick] (1.15,0.28) -- (1.15,3.13);
  \draw[thick] (2.3,0) -- (2.3,2.85);
  \draw[thick] (3.45,0.28) -- (3.45,3.13);
  \draw[gray!55] (0,0) -- (0,2.85);
  \draw[gray!55] (4.6,0) -- (4.6,2.85);
  \fill (2.3,0.95) circle (1.6pt);
  \node[font=\footnotesize, anchor=east] at (2.20,0.82) {$v_{1,2}$};
  \node[font=\scriptsize] at (2.58,1.31) {$\alpha$};
  \node[font=\scriptsize] at (2.69,0.64) {$\pi-\alpha$};
  \node[font=\footnotesize] at (3.05,1.78) {$(2,3)$};
\end{tikzpicture}
\caption{The Miura-ori $M_{3,4}$: three rows and four columns of congruent
parallelograms, with zigzag row lines and straight column lines. At the interior
vertex $v_{1,2}$ the two acute angles $\alpha$ are adjacent, as are the two
obtuse angles $\pi-\alpha$. One of each is marked. The lighter outer edges bound
the sheet and carry no crease. The shaded face is cell
$(2,3)$ of the grid $\Gmn$ of Section~\ref{sec:background}, with $v_{1,2}$ at one
corner.}
\label{fig:miura}
\end{figure}

A vertex of a crease pattern folds flat when the paper around it can be pressed
into a book without crumpling and without adding creases. Two classical
conditions govern this. A vertex folds flat iff the alternating sum of its angles, taken in cyclic
order, vanishes~\cite[Thm.~1]{Hull2003counting}. Every vertex of $\Mm$ satisfies this condition, since
$\alpha-\alpha+(\pi-\alpha)-(\pi-\alpha)=0$. At a vertex that folds flat, the numbers
of mountain and valley creases differ by exactly
two~\cite[Thm.~2]{Hull2003counting}. Each interior vertex of $\Mm$ has degree four, so with the counts differing by two it carries three
mountains and one valley, or three valleys and one mountain. Of the eight
assignments this permits at a vertex, the angles rule out two, leaving exactly
six, the bird's-foot forcing~\cite[\S2]{GineproHull2014counting}. An assignment $\mu$ is
\emph{locally flat-foldable} if it restricts at every interior vertex to one of
the six.

\begin{definition}[Face flip and the origami flip graph]\label{def:ofg}
A face flip~\cite{Akitaya2020faceflips} switches every crease bounding a single
face of $C$ between mountain and valley. The origami flip graph $\OFG(C)$ has
the locally flat-foldable assignments of $C$ as its
vertices~\cite{Hull2022maximal}, with an edge joining two that differ by a single face flip.
\end{definition}

The Ginepro--Hull bijection carries the locally flat-foldable
assignments of
$\Mm$ to the proper $3$-colourings of the $m\times n$ grid in which a chosen
corner cell has a fixed colour, sending each face of $\Mm$ to a cell of that grid
and each face flip to a change of colour at the corresponding
cell~\cite{GineproHull2014counting,Christensen2025origami}. Without that
normalisation the three colourings related by a global colour rotation describe
one assignment. Section~\ref{sec:background} develops this into the
height-function model used throughout.

Local flat-foldability constrains each vertex on its own and does not make the
whole sheet fold. Some locally flat-foldable Miura-ori assignments are realised
by no folding of the sheet. The obstruction is layer orders forced
incompatibly across multiple vertices~\cite[\S5]{GineproHull2014counting}. Which assignments fold globally is
a separate question, not addressed here, so the vertices of $\OFG(\Mm)$ are the
locally flat-foldable assignments throughout.

% ===========================================================================
\section{The height-function reduction}\label{sec:background}
% REOPENED 2026-08-04: degree = #extrema was imported from the companion by
%   citation; it is now proved in-paper as lem:degextrema, from the face-flip /
%   colour-change correspondence stated in Section 2. The Ginepro-Hull and
%   Cereceda bijections remain cited, as external results should be.
% Section 3 LOCKED 2026-06-28 (post-Adversary final scrutiny PASS:
%   0 CRITICAL, 0 WRONG; all citations verified against source; Option A
%   Option A applied for the hypothesis on degree=#extrema, CORRECTED
%   2026-08-04 from mn>=3 to m,n>=2: a 1 x n Miura-ori has no interior vertex,
%   so the reduction itself fails there, not merely the injectivity step.
% +Figure 1 (3-cone envelope on G_{5,5}) added 2026-06-30 post-Adversary
%   (APPROVED-WITH-FIXES: dgrid macro + drop |Q|!=|P| overclaim + name
%   meeting cells).
% ===========================================================================

Composing the Ginepro--Hull bijection~\cite{GineproHull2014counting} to the
grid $3$-colourings of Section~\ref{sec:miura} with the bipartite
height-function lift of those colourings~\cite{Cereceda2009mixing} identifies
the vertices of $\OFG(\Mm)$ with integer height functions on the $m\times n$
grid~\cite{Gupta2026companion}. This grid, $\Gmn$, has cells $(i,j)$ with
$1\le i\le m$ and $1\le j\le n$, two of them adjacent when they differ by one in
a single coordinate. Their grid distance is
$\dgrid\big((i,j),(i',j')\big)=|i-i'|+|j-j'|$, the Manhattan distance. A
height function is a map $h\colon\Gmn\to\Z$ with $|h(u)-h(v)|=1$ on each
edge, normalised so that $h(1,1)=0$. A cell is a strict local minimum of
$h$ if every neighbour exceeds it by one and a strict local maximum if
every neighbour falls short by one; the minima and maxima are the extrema
of $h$.

The following lemma recasts the degree sequence as a count of height
functions, and every later section works with these in place of $\OFG(\Mm)$.

\begin{lemma}[Degree and extrema]\label{lem:degextrema}
Let $m,n\ge2$, and let $h$ be the height function of a vertex of $\OFG(\Mm)$. The
neighbours of that vertex correspond to the extrema of $h$, one apiece. The
degree of the vertex is therefore the number of extrema of $h$.
\end{lemma}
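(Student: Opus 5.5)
The plan is to pass through the two bijections in turn and check that a face flip corresponds exactly to flipping a strict local extremum of the height function. By Section~\ref{sec:miura}, the Ginepro--Hull bijection sends each face of $\Mm$ to a cell of $\Gmn$ and each face flip to a change of colour at that cell. The flipped assignment must again be locally flat-foldable, and this corresponds to the new colouring again being proper. So the neighbours of a vertex correspond one-to-one to pairs $(c,x)$, where $c$ is a cell and $x$ is a colour different from $c$'s current colour and absent from all neighbours of $c$. Recolourings producing the same assignment are identified. This gives the neighbour set on the colouring side. I would then transport it through the height-function lift of \cite{Cereceda2009mixing}: a proper $3$-colouring $\kappa$ of the bipartite grid corresponds to an integer $h$ with $|h(u)-h(v)|=1$ on edges and $h\equiv\kappa\pmod 3$, unique once $h(1,1)=0$ fixes the normalisation.

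The local step is as follows. A recolouring at $c$ changes $\kappa(c)$ by $\pm1\pmod 3$. The new value $h'(c)$ must still differ by one from each neighbour's height. Since the grid is bipartite, all neighbours' heights have parity opposite to $h(c)$, and the lift forces $h'(c)\equiv h(c)\pm1 \pmod 3$ together with the same parity as $h(c)$. Thus $h'(c)=h(c)\pm2$, the value $h(c)\pm1\pmod 6$ being excluded by parity. The choice $h(c)+2$ is admissible exactly when every neighbour equals $h(c)+1$, that is, when $c$ is a strict local minimum. Similarly $h(c)-2$ is admissible exactly when $c$ is a strict local maximum.

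Since $m,n\ge2$, every cell has at least one neighbour, so no cell is both a minimum and a maximum. Each cell therefore admits at most one recolouring. Moreover, a legal recolouring exists at $c$ iff all neighbours share one colour, which happens iff all neighbours share one height, which is exactly the condition for $c$ to be an extremum. This yields an injection from the extrema of $h$ to neighbours of the vertex, sending $c$ to the face flip at the corresponding face, and it is onto by the forward direction.

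The main obstacle is the normalisation and injectivity. When $c=(1,1)$, the recolouring changes the fixed corner colour, and the lifted function violates $h(1,1)=0$. I would handle this by applying a global colour rotation, equivalently subtracting the constant $h'(1,1)$, and using the remark in Section~\ref{sec:miura} that rotated colourings describe the same assignment. The corner face flip is therefore still a genuine neighbour. I must also check that distinct extrema give distinct neighbours. Two flips at different cells $c\neq c'$ yield height functions that differ from $h$ on different single cells. After renormalisation, these could coincide only if one differed from the other by a constant, which is impossible on a grid with at least two cells. Hence the correspondence is a bijection, and the degree equals the number of extrema.
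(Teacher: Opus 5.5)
Your proposal is correct and follows the paper's proof. A flip changes $h$ at a single cell, and the only admissible new value there is $h(c)\pm2$, which the paper writes as $2t-h(v)$. That value exists exactly when all neighbours share one height, that is, at an extremum. The corner flip is absorbed by renormalisation, and distinct extrema give distinct neighbours.

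One slip needs fixing in the injectivity step. The difference of the two unnormalised flips, $h_c-h_{c'}$, is nonzero exactly at $c$ and $c'$. It can therefore be a constant only if the grid has no third cell. So the argument needs at least three cells, not two. On $G_{1,2}$ with $h=(0,1)$, both flips give $(0,-1)$ after renormalisation. The hypothesis $m,n\ge2$ supplies the third cell, which is exactly how the paper uses it.
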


\begin{proof}
A face flip changes the colour of exactly one cell
(Section~\ref{sec:miura}), so on height functions it alters $h$ at a single cell
$v$ and leaves the rest fixed. When $v$ is $(1,1)$, a global shift also restores
$h(1,1)=0$.
The result is again a height function exactly when the new value $h'(v)$ differs
by one from $h(u)$ at every neighbour $u$ of $v$, and $h'(v)\ne h(v)$.

\medskip\noindent
Every neighbour $u$ of $v$ has $h(u)=h(v)\pm1$. Suppose two of them disagree,
say $h(u_1)=h(v)-1$ and $h(u_2)=h(v)+1$. The only value at distance one from
both $h(v)-1$ and $h(v)+1$ is $h(v)$ itself, so no flip is available at $v$.
Suppose instead every neighbour carries a common height $t$, which forces
$h(v)=t\pm1$. The values at distance one from $t$ are $t-1$ and $t+1$, so
$h'(v)=2t-h(v)$ is the sole admissible change and the flip at $v$ is unique. When
the neighbours share height $t$, $h(v)=t-1$ makes $v$ a strict local minimum and
$h(v)=t+1$ a strict local maximum; conversely, an extremum has all its
neighbours at a common height. The cells admitting a flip are therefore exactly
the extrema of $h$.

\medskip\noindent
Distinct extrema give distinct neighbours. A flip at a cell other than $(1,1)$
alters $h$ there and nowhere else, so two such flips move different cells. The
flip at $(1,1)$ would change $h(1,1)$ by $\pm2$; renormalising to hold
$h(1,1)=0$ shifts every other cell by $\mp2$ instead. A flip at another extremum
$w$ moves only $w$. Since $m,n\ge2$, the grid has a third cell, distinct from
$(1,1)$ and $w$; the $(1,1)$ flip moves that cell, while the flip at $w$ fixes
it. Each extremum
thus contributes one neighbour, and different extrema contribute different ones.
\end{proof}

To reconstruct $h$ from its extrema and their values, let
$P \subseteq \Gmn$ be the set of cells at which $h$ attains a strict
local minimum. A cone with apex
$p\in\Gmn$ and offset $c\in\Z$ is the function $v\mapsto c+\dgrid(p,v)$,
taking value $c$ at the apex, its lowest point. For $p\in P$, setting the offset to $h(p)$
makes the cone agree with $h$ at $p$, and $h$ is then the pointwise minimum of
these cones (Figure~\ref{fig:kcone-envelope}).

\begin{lemma}[Cone envelope]\label{lem:coneenv}
Let $h$ be a height function on $\Gmn$, let $P$ be its set of strict local
minima and $Q$ its set of strict local maxima. Then $P$ and $Q$ are nonempty
and
\begin{equation}\label{eq:env}
  h(v)=\min_{p\in P}\big(h(p)+\dgrid(p,v)\big)
      =\max_{q\in Q}\big(h(q)-\dgrid(q,v)\big)\qquad\text{for every }v\in\Gmn.
\end{equation}
\end{lemma}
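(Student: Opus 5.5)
We prove the minimum formula in \eqref{eq:env}; the maximum formula then follows by applying it to $-h$, which is again a height function (after the harmless normalisation) and whose strict local minima are exactly the strict local maxima of $h$. Nonemptiness of $P$ comes first: $\Gmn$ is finite, so $h$ attains a global minimum at some cell $v_0$. Every neighbour $u$ satisfies $h(u)=h(v_0)\pm1$ and $h(u)\ge h(v_0)$, so $h(u)=h(v_0)+1$. Since $m,n\ge2$, $v_0$ has at least one neighbour, so $v_0$ is a strict local minimum. Symmetrically, a global maximum lies in $Q$.

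The upper bound $h(v)\le \min_{p\in P}(h(p)+\dgrid(p,v))$ is the $1$-Lipschitz property. Take a shortest grid path from $p$ to $v$ of length $\dgrid(p,v)$. Along it $h$ changes by exactly one per step, so $h(v)\le h(p)+\dgrid(p,v)$ for every cell $p$, and in particular for every $p\in P$.

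For the lower bound we use steepest descent. Fix $v$. If $v\in P$, take $p=v$. Otherwise $v$ is not a strict local minimum, so some neighbour $v_1$ has $h(v_1)=h(v)-1$. Repeat from $v_1$. Heights strictly decrease and are bounded below on the finite grid, so the walk terminates after some $k$ steps at a cell $p\in P$ with $h(p)=h(v)-k$. The walk has $k$ steps, so $\dgrid(p,v)\le k$. Hence
\[
h(p)+\dgrid(p,v)\le h(v)-k+k=h(v).
\]
Combined with the upper bound, the minimum is attained and equals $h(v)$.

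The one point needing care is the termination criterion. "Not a strict local minimum" must mean "some neighbour is lower", and this equivalence holds because all neighbours differ from $h(v)$ by exactly one. If $v$ is not a strict local minimum, then not every neighbour exceeds it by one, so some neighbour is exactly one lower. I expect no real obstacle here. The only subtleties are this dichotomy and the fact that every cell has a neighbour when $m,n\ge2$, which is needed so that a global extremum qualifies as strict.
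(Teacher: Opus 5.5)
Your proof is correct and follows the paper's argument: the upper bound comes from unit steps along a shortest path, the lower bound from a descent walk that halts at a strict local minimum, and the maximum formula from passing to $-h$. The only deviation is that you get $P\neq\emptyset$ from a global minimum, whereas the paper reads it off the halting cell of the descent walk; that route also shows your appeal to $m,n\ge2$ is unnecessary, since under the paper's definition a cell all of whose (possibly zero) neighbours exceed it by one is already a strict local minimum.
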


\begin{proof}
Fix a cell $v$. A shortest path to $v$ from any cell $u$ has $\dgrid(u,v)$
edges, and since $h$ changes by exactly $1$ across each, $h(v)-h(u)\le\dgrid(u,v)$. Restricting $u$ to $P$ yields
$h(v)\le\min_{p\in P}\big(h(p)+\dgrid(p,v)\big)$.

\medskip\noindent
For the reverse inequality, step from $v$ to a neighbour of smaller height and
repeat while such a neighbour exists. Each step lowers $h$ by exactly $1$, so no
cell recurs and the walk halts on the finite grid, at a cell $p$ whose
neighbours all exceed it. That cell is a strict local minimum, so $P$ is nonempty
and contains $p$. The walk takes $h(v)-h(p)$ steps and joins $p$ to $v$, so
$\dgrid(p,v)\le h(v)-h(p)$, hence $h(v)\ge h(p)+\dgrid(p,v)$. Minimising over $P$
gives $h(v)\ge\min_{p'\in P}\big(h(p')+\dgrid(p',v)\big)$. With the upper bound $h(v)\le\min_{p\in P}\big(h(p)+\dgrid(p,v)\big)$,
this gives the first equality.

\medskip\noindent
Negating a height function gives a height function, because the unit edge
differences persist and $-h(1,1)=0$. Negation exchanges strict local minima with
strict local maxima. The strict local minima of $-h$ are therefore $Q$, made nonempty by the descent
argument applied to $-h$. The first equality applied to $-h$
reads $-h(v)=\min_{q\in Q}\big(-h(q)+\dgrid(q,v)\big)$. Multiplying by $-1$ turns
the minimum into a maximum and gives
$h(v)=\max_{q\in Q}\big(h(q)-\dgrid(q,v)\big)$, the second equality.
\end{proof}

\begin{figure}[ht]
\centering
\begin{tikzpicture}[x=1.1cm, y=-1.1cm, font=\small]
  \draw[gray!50, step=1] (1,1) grid (5,5);
  \foreach \i/\j/\h in {
    1/1/0, 1/2/1, 1/3/2, 1/4/1, 1/5/0,
    2/1/1, 2/2/2, 2/3/3, 2/4/2, 2/5/1,
    3/1/2, 3/2/3, 3/3/4, 3/4/3, 3/5/2,
    4/1/1, 4/2/2, 4/3/3, 4/4/4, 4/5/3,
    5/1/0, 5/2/1, 5/3/2, 5/4/3, 5/5/4
  }
    \node[circle, draw=black, fill=white, minimum size=7mm, inner sep=0pt]
      at (\j,\i) {$\h$};
  \foreach \i/\j/\h in {1/1/0, 1/5/0, 5/1/0}
    \node[circle, draw=black, fill=gray!30, minimum size=7mm, inner sep=0pt]
      at (\j,\i) {$\h$};
  \foreach \i/\j/\h in {3/3/4, 4/4/4, 5/5/4}
    \node[rectangle, draw=black, very thick, fill=white,
          minimum size=7mm, inner sep=0pt] at (\j,\i) {$\h$};
  \node[font=\footnotesize, anchor=east] at (0.6,1) {$p_1$};
  \node[font=\footnotesize, anchor=west] at (5.4,1) {$p_2$};
  \node[font=\footnotesize, anchor=east] at (0.6,5) {$p_3$};
\end{tikzpicture}
\caption{A $3$-cone envelope on $G_{5,5}$ with apexes $p_1=(1,1)$, $p_2=(1,5)$, $p_3=(5,1)$ at offset $0$, so \eqref{eq:env} reads $h(v)=\min_i\dgrid(p_i,v)$. The strict local minima (shaded) are the apexes; the strict local maxima (boxed) sit where cones meet at equal distance. All three cones meet at $(3,3)$, and $p_2$ and $p_3$ alone at $(4,4)$ and $(5,5)$. Section~\ref{sec:maxima} characterises such cells in general.}
\label{fig:kcone-envelope}
\end{figure}

\begin{definition}[Vertex counts]\label{def:counts}
For integers $a,b\ge1$, $N_{(a,b)}(m,n)$ is the number of height functions
on $\Gmn$ with exactly $a$ strict local minima and $b$ strict local maxima,
and $E_d(m,n)$ is the number with exactly $d$ extrema.
\end{definition}

As a count of height functions, $E_d(m,n)$ is defined on the whole region
$mn\ge2$, where every cell has a neighbour and no cell is vacuously both a
minimum and a maximum. On the smaller region $m,n\ge2$, Lemma~\ref{lem:degextrema}
makes it the number of degree-$d$ vertices of $\OFG(\Mm)$, the reading intended
throughout. The high region on which the polynomials live sits inside $m,n\ge2$.

The negation $h\mapsto-h$ exchanges minima and maxima, so
$N_{(a,b)}=N_{(b,a)}$. A height function with $d$ extrema then has $a$ minima and
$b$ maxima with $a+b=d$, and Lemma~\ref{lem:coneenv} makes both sets nonempty, so
$a,b\ge1$. Summing
$N_{(a,b)}$ over $a+b=d$, then folding the pairs $(a,b),(b,a)$ by the
symmetry, gives
\begin{equation}\label{eq:Ed}
  E_d(m,n)=\sum_{\substack{a+b=d\\ a,b\ge1}}N_{(a,b)}(m,n)
          =\sum_{\substack{1\le a\le b\\ a+b=d}}\big(2-\delta_{ab}\big)\,N_{(a,b)}(m,n),
\end{equation}
where $\delta_{ab}$ is the Kronecker delta, $1$ if $a=b$ and $0$ otherwise. This
reduces $E_d$ to the counts $N_{(a,b)}$; the next section encodes each height
function by its minima, recasting these counts as a lattice-point problem.

% ===========================================================================
\section{The Envelope Structure Theorem}\label{sec:envelope}
% Section 4 LOCKED 2026-07-09 (word-level prose scrutiny pass complete: A1
%   activity-condition granularity fix; set B tense + 3 prose colons; set C
%   term consistency, noun/verb, de-personify, opener stall. Builds clean.)
% ===========================================================================

Equation~\eqref{eq:env} expresses every height function as the pointwise minimum of a
family of cones, one for each strict local minimum. This
section makes that minimum an object in its own right, the envelope of a
configuration, and shows that every height function is the envelope of a
unique admissible configuration. Let $m,n$ be integers with $m,n\ge1$ and
$mn\ge2$.

\begin{definition}[Configuration and envelope]\label{def:config}
A configuration on $\Gmn$ is a pair $(A,c)$ where
$A=\{p_1,\dots,p_a\}\subseteq\Gmn$ is a set of $a\ge1$ distinct cells, the
apexes, each written $p_s=(r_s,\kappa_s)$ by row and column; and
$c=(c_1,\dots,c_a)\in\Z^a$ assigns each apex $p_s$ an integer offset $c_s$.
The lower envelope of $(A,c)$ is
\begin{equation}\label{eq:Eac}
  E_{A,c}(v)=\min_{1\le s\le a}\big(c_s+\dgrid(p_s,v)\big).
\end{equation}
\end{definition}

The envelope $E_{A,c}$ is an additively weighted $L^1$ Voronoi diagram of the
apexes~\cite{Aurenhammer1991}, a polyhedral-norm Voronoi diagram of the kind
studied tropically~\cite{CriadoJoswigSantos2021,DevelinSturmfels2004}.

\begin{definition}[Admissible configuration]\label{def:admissible}
Apex $p_s$ is \emph{active} if cone $s$ lies strictly below every other
cone at $p_s$. Cone $s$ has value $c_s$ at its apex $p_s$ and cone $t$ has
value $c_t+\dgrid(p_t,p_s)$, so
\begin{equation}\label{eq:active}
  c_s<c_t+\dgrid(p_t,p_s)\qquad\text{for all }t\ne s.
\end{equation}
The configuration satisfies the \emph{activity condition} if every apex is
active. It satisfies the \emph{parity condition} if
\begin{equation}\label{eq:par}
  c_s-c_t\equiv\dgrid(p_s,p_t)\pmod 2\qquad\text{for all }s,t.
\end{equation}
Equivalently, any two cones share a common parity at every cell, so the envelope
differs across each edge rather than staying level.
The configuration $(A,c)$ is \emph{admissible} if it satisfies the activity
condition, the parity condition, and $E_{A,c}(1,1)=0$.
\end{definition}

Under these conditions, Lemmas~\ref{lem:minatapex}--\ref{lem:activemin}
show that $E_{A,c}$ is a height function whose strict local minima are
exactly the active apexes. Theorem~\ref{thm:envelope} assembles this into a
bijection between height functions and admissible configurations, so
counting the height functions with a prescribed number of minima becomes
counting admissible configurations. The cone-pair bijection
of~\cite{Gupta2026companion} is the two-apex case, matching each
degree-$4$ vertex to an envelope of two cones; Theorem~\ref{thm:envelope}
lifts it to any number of apexes, as counting every degree requires.

\begin{lemma}[Minima at apexes]\label{lem:minatapex}
For a configuration $(A,c)$, every strict local minimum of $E_{A,c}$ is
an apex.
\end{lemma}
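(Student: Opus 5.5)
The argument is a contrapositive: for any cell $v$ that is not an apex, I exhibit a neighbour $u$ of $v$ with $E_{A,c}(u)<E_{A,c}(v)$, or at least $E_{A,c}(u)\le E_{A,c}(v)$. Either way $v$ fails the definition of strict local minimum, since that definition requires every neighbour to exceed $v$.

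Fix a cell $v\notin A$. Choose an index $s$ attaining the minimum in \eqref{eq:Eac}, so that $E_{A,c}(v)=c_s+\dgrid(p_s,v)$. Because $v\ne p_s$, the distance $\dgrid(p_s,v)$ is at least $1$. A Manhattan-geodesic step from $v$ toward $p_s$ therefore exists inside $\Gmn$: change whichever coordinate of $v$ differs from that of $p_s$ by one unit in the direction of $p_s$. This gives a neighbour $u$ of $v$ with $\dgrid(p_s,u)=\dgrid(p_s,v)-1$. The grid is a full rectangle, and both coordinates of $u$ lie between those of $v$ and $p_s$, so $u$ is indeed a cell of $\Gmn$.

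The minimum in \eqref{eq:Eac} is bounded above by its $s$-th term, so
\[
E_{A,c}(u)\le c_s+\dgrid(p_s,u)=E_{A,c}(v)-1<E_{A,c}(v).
\]
Thus $v$ has a neighbour strictly lower than itself and is not a strict local minimum. The definition of strict local minimum was stated for height functions, where it says every neighbour exceeds $v$ by one. For a general envelope I read it as saying every neighbour exceeds $v$. The displayed inequality violates either reading, so no case split is needed.

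The only point needing care is the existence of the geodesic neighbour $u$ inside the rectangular grid, rather than merely in $\Z^2$. The coordinate-wise step toward $p_s$ settles this. No use of the activity or parity conditions is required, which is consistent with the lemma being stated for arbitrary configurations.
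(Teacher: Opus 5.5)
Your proof is correct and is essentially the paper's argument: take a cone attaining the envelope at $v$, note that $v$ differs from its apex, and step one unit toward that apex to reach a strictly lower neighbour. The only differences are presentational: you argue by contrapositive over $v\notin A$ rather than by contradiction from an assumed strict local minimum, and you spell out that the geodesic step stays inside the rectangular grid, which the paper leaves implicit.
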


\begin{proof}
Let $v$ be a strict local minimum of $E_{A,c}$, and let cone $s^\ast$
attain the minimum $E_{A,c}(v)$, so
$E_{A,c}(v)=c_{s^\ast}+\dgrid(p_{s^\ast},v)$. Suppose, for contradiction,
that $v\ne p_{s^\ast}$. Moving one step from $v$ toward $p_{s^\ast}$
reaches a neighbour $w$ with $\dgrid(p_{s^\ast},w)=\dgrid(p_{s^\ast},v)-1$.
Since $E_{A,c}(w)$ is the minimum of the cone values,
\[
  E_{A,c}(w) \ \le\ c_{s^\ast}+\dgrid(p_{s^\ast},w) \ =\ E_{A,c}(v)-1 \ <\ E_{A,c}(v) .
\]
This contradicts that $v$ is a strict local minimum. Hence
$v=p_{s^\ast}$, an apex.
\end{proof}

\begin{lemma}[Cone parity and unit edge differences]\label{lem:parityvalid}
For a configuration $(A,c)$ satisfying the parity
condition~\eqref{eq:par}, $c_s-c_t\equiv\dgrid(p_s,p_t)\pmod 2$ for all
$s$ and $t$, the cones $c_s+\dgrid(p_s,\cdot)$ share a common parity at
each cell, and the lower envelope $E_{A,c}\colon\Gmn\to\Z$ satisfies
$|E_{A,c}(u)-E_{A,c}(v)|=1$ on every edge $uv$ of $\Gmn$.
\end{lemma}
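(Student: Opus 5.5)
The argument runs in two steps: first a parity statement about the cones at a single cell, then a comparison of the minimising cones across an edge.

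\textbf{Step 1: common parity.} Write $f_s(v)=c_s+\dgrid(p_s,v)$ for cone $s$. For two cones $s,t$ and any cell $v$ we have
\[
f_s(v)-f_t(v)=(c_s-c_t)+\dgrid(p_s,v)-\dgrid(p_t,v).
\]
On the grid $\Gmn$ the Manhattan distance satisfies
\[
\dgrid(p_s,v)+\dgrid(p_t,v)\equiv\dgrid(p_s,p_t)\pmod 2,
\]
since each of the three quantities is a sum of coordinate differences. Hence
\[
\dgrid(p_s,v)-\dgrid(p_t,v)\equiv\dgrid(p_s,p_t)\pmod 2.
\]
Combining this with the parity condition~\eqref{eq:par} gives $f_s(v)-f_t(v)\equiv 0\pmod 2$. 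So all cones share a parity at $v$, and the envelope value $E_{A,c}(v)$ carries that common parity.

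\textbf{Step 2: unit edge differences.} Take an edge $uv$. Moving one step changes the Manhattan distance to any fixed apex by exactly $\pm1$, so each cone satisfies $f_s(v)=f_s(u)\pm1$. Two consequences follow.

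First, the parities at $u$ and at $v$ are opposite. Every cone flips parity across the edge, so the common parity at $v$ is the opposite of the one at $u$. Therefore $E_{A,c}(u)\ne E_{A,c}(v)$.

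Second, the envelope changes by at most one. Let $s$ attain the minimum at $u$. Then
\[
E_{A,c}(v)\le f_s(v)\le f_s(u)+1=E_{A,c}(u)+1.
\]
Symmetrically, letting $t$ attain the minimum at $v$ gives $E_{A,c}(u)\le E_{A,c}(v)+1$.

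Together these give $|E_{A,c}(u)-E_{A,c}(v)|\le1$ with the difference odd, so the difference is exactly one.

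\textbf{Where the care lies.} The argument is short. The only point needing care is the parity identity for the Manhattan distance. It reduces coordinatewise to $|x-a|+|x-b|\equiv|a-b|\pmod 2$, which holds because $|y|\equiv y\pmod 2$ for every integer $y$.
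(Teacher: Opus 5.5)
Your proof is correct and follows essentially the paper's route: an at-most-one bound from the $1$-Lipschitz cones, combined with a parity flip across each edge that forces the difference to be odd. The only cosmetic difference is that the paper writes the common parity as $i+j$ plus the fixed residue of $c_s+r_s+\kappa_s$, whereas you compare cones pairwise via $\dgrid(p_s,v)+\dgrid(p_t,v)\equiv\dgrid(p_s,p_t)\pmod 2$ and get the flip from each cone's unit step.
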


\begin{proof}
Each cone $c_s+\dgrid(p_s,\cdot)$ is integer-valued, being the sum of an
integer and a non-negative integer, and $1$-Lipschitz by the triangle
inequality on $\dgrid$. A pointwise minimum of integer-valued
$1$-Lipschitz functions is again integer-valued and $1$-Lipschitz, so
$E_{A,c}$ is both. On every edge $uv$, where
$\dgrid(u,v)=1$, the integer $|E_{A,c}(u)-E_{A,c}(v)|$ is at most $1$,
hence $0$ or $1$. Ruling out $0$ amounts to showing that $E_{A,c}$ has
opposite parities at the two endpoints of every edge, as values of
opposite parity differ by an odd amount. Modulo $2$ every integer is
congruent to its negative, so an absolute value may be dropped and a
minus sign flipped, giving $|a-b|\equiv a-b\equiv a+b\pmod 2$ for all integers
$a$ and $b$. With $p_s=(r_s,\kappa_s)$, the value of cone
$s$ at a cell $(i,j)$ is
\[
  c_s+\dgrid(p_s,(i,j))=c_s+|r_s-i|+|\kappa_s-j|
    \equiv(i+j)+(c_s+r_s+\kappa_s)\pmod 2 .
\]
The same identity gives
$\dgrid(p_s,p_t)\equiv(r_s+\kappa_s)+(r_t+\kappa_t)\pmod 2$. Substituting into
the parity condition~\eqref{eq:par} turns $c_s-c_t\equiv\dgrid(p_s,p_t)$ into
$c_s-c_t\equiv(r_s+\kappa_s)+(r_t+\kappa_t)$. As signs are immaterial
modulo $2$, collecting the $s$- and $t$-indexed terms on opposite sides
gives
\[
  c_s+r_s+\kappa_s\equiv c_t+r_t+\kappa_t\pmod 2 .
\]
The summand $c_s+r_s+\kappa_s$ thus has one fixed parity, so at each cell
every cone value has parity $i+j$ plus that fixed amount, and so does
their minimum $E_{A,c}$. Across an edge $i+j$ changes by $1$, flipping
this parity, so $E_{A,c}$ has opposite parities at the endpoints. The
difference $|E_{A,c}(u)-E_{A,c}(v)|$ is therefore odd, hence equal to $1$
rather than $0$.
\end{proof}

\begin{lemma}[Minima Criterion]\label{lem:activemin}
For a configuration $(A,c)$ with apexes $p_1,\dots,p_a$ and offsets
$c_1,\dots,c_a$ satisfying the parity condition~\eqref{eq:par},
$c_s-c_t\equiv\dgrid(p_s,p_t)\pmod 2$ for all $s$ and $t$, an apex $p_s$
is a strict local minimum of $E_{A,c}$ iff it is
active~\eqref{eq:active}, $c_s<c_t+\dgrid(p_t,p_s)$ for every $t\ne s$.
\end{lemma}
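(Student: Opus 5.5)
The proof splits into the two directions, after a preliminary reduction. By Lemma~\ref{lem:parityvalid}, $E_{A,c}$ changes by exactly one across every edge. So for any cell $v$ and neighbour $w$, $E_{A,c}(w)=E_{A,c}(v)\pm1$. Hence $p_s$ is a strict local minimum iff no neighbour $w$ of $p_s$ has $E_{A,c}(w)<E_{A,c}(p_s)$, which amounts to $E_{A,c}(w)\ge E_{A,c}(p_s)$ for each neighbour. The hypothesis $mn\ge2$ guarantees $p_s$ has at least one neighbour, so this is not vacuous. The comparison throughout is between $E_{A,c}(p_s)$ and $c_s$, the value of cone $s$ at its own apex.

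\emph{Active implies minimum.} If $p_s$ is active, then $E_{A,c}(p_s)=c_s$, since cone $s$ is strictly below every other cone there. For any neighbour $w$ and any $t$, the triangle inequality gives
\[
c_t+\dgrid(p_t,w)\ge c_t+\dgrid(p_t,p_s)-1 .
\]
For $t\ne s$ the activity inequality~\eqref{eq:active} gives $c_t+\dgrid(p_t,p_s)\ge c_s+1$, so cone $t$ is at least $c_s$ at $w$. For $t=s$, cone $s$ equals $c_s+1$ at $w$. So $E_{A,c}(w)\ge c_s=E_{A,c}(p_s)$, and by the parity reduction $E_{A,c}(w)=c_s+1$. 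Thus $p_s$ is a strict local minimum.

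\emph{Minimum implies active, by contrapositive.} Suppose $p_s$ is not active, so some $t\ne s$ has $c_t+\dgrid(p_t,p_s)\le c_s$. Then $E_{A,c}(p_s)\le c_t+\dgrid(p_t,p_s)$. Since $p_t\ne p_s$ (the apexes are distinct), take the neighbour $w$ of $p_s$ one step toward $p_t$, so $\dgrid(p_t,w)=\dgrid(p_t,p_s)-1$. We now need $E_{A,c}(w)<E_{A,c}(p_s)$. The obstacle is that the minimum at $p_s$ may be attained by a third cone $u$ rather than by $t$. To handle this, choose $t$ to be a cone attaining $E_{A,c}(p_s)$ with $p_t\ne p_s$.

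Such a $t$ exists. If the minimum at $p_s$ is attained only by cone $s$, then $c_s<c_{t'}+\dgrid(p_{t'},p_s)$ for all $t'\ne s$, which is activity, a contradiction. Otherwise some cone $t\ne s$ attains the minimum; its apex is distinct from $p_s$, and $E_{A,c}(p_s)=c_t+\dgrid(p_t,p_s)$. Moving one step toward $p_t$ then gives
\[
E_{A,c}(w)\le E_{A,c}(p_s)-1 ,
\]
so $p_s$ is not a strict local minimum. This is the same descent step as in Lemma~\ref{lem:minatapex}.

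The only delicate point is this choice of attaining cone. It matters in particular when the activity inequality fails with equality, since a tie still makes the cone-$t$ descent valid. Parity is used only in the first direction, to upgrade ``$\ge$'' to a strict increase.
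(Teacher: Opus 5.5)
Your proof is correct and follows the paper's argument essentially step for step. The forward direction uses the same triangle-inequality bound, with Lemma~\ref{lem:parityvalid} upgrading $E_{A,c}(w)\ge E_{A,c}(p_s)$ to a strict increase, and the converse uses the same descent toward a cone $r\ne s$ that attains $E_{A,c}(p_s)$. The only difference is cosmetic: you get that attaining cone by contradiction (if only cone $s$ attained, $p_s$ would be active), whereas the paper observes directly that if cone $s$ attains, then the violating cone $t$ attains as well.
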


\begin{proof}
Suppose first that $p_s$ is active, so that
$c_s < c_t + \dgrid(p_t,p_s)$ for every $t \ne s$; as both sides are
integers, $c_t + \dgrid(p_t,p_s) \ge
c_s + 1$. At $p_s$, cone $s$ has value $c_s + \dgrid(p_s,p_s) = c_s$ and
every other cone has value at least $c_s + 1$, so $E_{A,c}(p_s)$, the
minimum of these cone values, equals $c_s$.
Let $w$ be a neighbour of $p_s$ in $\Gmn$, so $\dgrid(p_s,w) = 1$. At
$w$, cone $s$ has value $c_s + \dgrid(p_s,w) = c_s + 1$, while for each
$t \ne s$ the triangle inequality gives
\[
  c_t + \dgrid(p_t,w) \ \ge\ c_t + \dgrid(p_t,p_s) - 1 \ \ge\ c_s .
\]
Thus $E_{A,c}(w)$, the minimum of these cone values, is at least
$c_s = E_{A,c}(p_s)$. Since $E_{A,c}$ differs by exactly $1$ across each
edge by Lemma~\ref{lem:parityvalid}, this forces
$E_{A,c}(w) = E_{A,c}(p_s) + 1$. Every neighbour of $p_s$ exceeds it, so
$p_s$ is a strict local minimum, establishing that active apexes are
minima.

\medskip\noindent
Conversely, suppose that $p_s$ is not active, so that
$c_t + \dgrid(p_t,p_s) \le c_s$ for some $t \ne s$. If cone $s$ attains
the minimum $E_{A,c}(p_s)$, then $E_{A,c}(p_s) = c_s$. Since
$E_{A,c}(p_s)$ is the minimum of the cone values,
$E_{A,c}(p_s) \le c_t + \dgrid(p_t,p_s)$. The assumption
$c_t + \dgrid(p_t,p_s) \le c_s = E_{A,c}(p_s)$ then forces
$c_t + \dgrid(p_t,p_s) = E_{A,c}(p_s)$, so cone $t$ attains it as well.
Otherwise, some cone other than $s$ attains it. Either way,
$E_{A,c}(p_s) = c_r + \dgrid(p_r,p_s)$ for some $r \ne s$, and since the
apexes are distinct, $p_r \ne p_s$.

\medskip\noindent
Moving one step from $p_s$ toward $p_r$ reaches a neighbour $w$ with
$\dgrid(p_r,w) = \dgrid(p_r,p_s) - 1$. Since $E_{A,c}(w)$ is the minimum
of the cone values,
\[
  E_{A,c}(w) \ \le\ c_r + \dgrid(p_r,w) \ =\ E_{A,c}(p_s) - 1 \ <\ E_{A,c}(p_s) .
\]
A neighbour of $p_s$ therefore has smaller value, so $p_s$ is not a strict
local minimum, establishing by contraposition that minima are active apexes.
\end{proof}

\begin{theorem}[Envelope Structure Theorem]\label{thm:envelope}
Every height function $h$ on $\Gmn$ is the lower envelope $E_{A,c}$ of a
unique admissible configuration $(A,c)$, whose apex set $A$ is the set of
strict local minima of $h$ and whose offsets are $c_s=h(p_s)$. This gives a bijection
between the height functions on $\Gmn$ and the admissible configurations.
\end{theorem}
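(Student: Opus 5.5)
The plan is to assemble the bijection from Lemmas~\ref{lem:coneenv}, \ref{lem:minatapex}, \ref{lem:parityvalid} and~\ref{lem:activemin}. I define two maps and check that each lands in the right set and that they are mutually inverse. The forward map $\Phi$ sends a height function $h$ to the configuration $(P,c)$, where $P$ is the set of strict local minima of $h$ and $c_s=h(p_s)$. The backward map $\Psi$ sends an admissible configuration $(A,c)$ to its envelope $E_{A,c}$.

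\emph{Step 1: $\Phi(h)$ is admissible and $E_{\Phi(h)}=h$.} By Lemma~\ref{lem:coneenv}, $P$ is nonempty, so $(P,c)$ is a configuration, and $h=E_{P,c}$. The normalisation then gives $E_{P,c}(1,1)=h(1,1)=0$. For parity, $h$ changes by exactly one along each edge of a shortest path from $p_s$ to $p_t$, so $h(p_s)-h(p_t)\equiv\dgrid(p_s,p_t)\pmod 2$; this is~\eqref{eq:par}. For activity, each $p_s\in P$ is a strict local minimum of $E_{P,c}=h$, and the configuration satisfies parity, so the forward direction of Lemma~\ref{lem:activemin} (strict minimum implies active) shows that $p_s$ is active. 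Hence $\Phi(h)$ is admissible.

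\emph{Step 2: $\Psi$ lands in height functions and $\Phi\circ\Psi=\mathrm{id}$.} Let $(A,c)$ be admissible. By Lemma~\ref{lem:parityvalid}, $E_{A,c}$ is integer-valued with unit differences on edges, and $E_{A,c}(1,1)=0$, so it is a height function. By Lemma~\ref{lem:minatapex}, its strict local minima all lie in $A$. By Lemma~\ref{lem:activemin}, every apex is active and hence a strict minimum. So the set of strict minima is exactly $A$. In the proof of Lemma~\ref{lem:activemin}, activity gives $E_{A,c}(p_s)=c_s$, so the offsets are recovered. Therefore $\Phi(E_{A,c})=(A,c)$.

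Step 1 gives $\Psi\circ\Phi=\mathrm{id}$ and Step 2 gives $\Phi\circ\Psi=\mathrm{id}$, so $\Phi$ and $\Psi$ are inverse bijections. For uniqueness, suppose $h=E_{A,c}$ for some admissible $(A,c)$. Then $(A,c)=\Phi(E_{A,c})=\Phi(h)$, so $A$ must be the minima set of $h$ and $c_s=h(p_s)$.

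The one point needing care is the hypothesis $mn\ge2$. It ensures every cell has a neighbour, so "strict local minimum" is not vacuous. In particular, an apex with no neighbours would otherwise be trivially a strict minimum, which would break the count but not the argument. The main obstacle is bookkeeping, not mathematics: Lemma~\ref{lem:activemin} requires the parity condition, so in Step 1 parity must be established \emph{before} activity is invoked.
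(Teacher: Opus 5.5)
Your proposal is correct and follows essentially the same route as the paper. In the forward direction you use Lemma~\ref{lem:coneenv} to get $h=E_{P,c}$, prove parity by the shortest-path argument, and only then invoke Lemma~\ref{lem:activemin} for activity. In the converse you use Lemmas~\ref{lem:parityvalid}, \ref{lem:minatapex} and~\ref{lem:activemin} to identify the minima with $A$ and to recover $c_s=E_{A,c}(p_s)$, exactly as the paper does. Your separate uniqueness paragraph just spells out what the paper leaves implicit in its mutual-inverse argument.
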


\begin{proof}
Given a height function $h$ with strict local minima $P$,
equation~\eqref{eq:env} expresses $h$ as the lower envelope of the cones at
$P$. This is $E_{A,c}$ for the configuration with $A=P$ and
$c_s=h(p_s)$, so
$h=E_{A,c}$, and $E_{A,c}(1,1)=h(1,1)=0$ since a height function vanishes
at $(1,1)$. The parity
condition~\eqref{eq:par} holds because a shortest path from $p_t$ to $p_s$ has
$\dgrid(p_s,p_t)$ edges, along each of which $h$ changes by $\pm1$, so
$h(p_s)-h(p_t)\equiv\dgrid(p_s,p_t)\pmod 2$. Each $p_s$, a strict
local minimum, is active by Lemma~\ref{lem:activemin}. Thus $(A,c)$ is
admissible.

\medskip\noindent
Conversely, let $(A,c)$ be admissible. Lemma~\ref{lem:parityvalid} gives
$|E_{A,c}(u)-E_{A,c}(v)|=1$ on every edge of $\Gmn$; these unit edge
differences and $E_{A,c}(1,1)=0$ are the conditions defining a height
function, so $E_{A,c}$ is one.
Lemma~\ref{lem:minatapex} makes every strict local minimum of $E_{A,c}$
an apex. Since every apex is active, Lemma~\ref{lem:activemin}
makes every apex a strict local minimum. The strict local minima are
therefore exactly the apex set $A$. At each apex $p_s$, cone $s$ has value
$c_s$, which lies strictly below every other cone there because $p_s$ is
active, so $E_{A,c}(p_s)=c_s$. The map taking a height function to its strict local
minima and the values it takes there thus sends $E_{A,c}$ to $(A,c)$,
undoing the envelope $(A,c)\mapsto E_{A,c}$. The forward direction's
$E_{A,c}=h$ shows the envelope undoes this map. Each is thus the other's
inverse, so the correspondence is a bijection.
\end{proof}

\begin{corollary}[Parameter count]\label{cor:Odparams}
A height function $h$ on $\Gmn$ with $d$ extrema is determined by at most
$3\lfloor d/2\rfloor-1$ integer parameters, an $O(d)$ count depending only
on $d$, on neither $m$ nor $n$.
\end{corollary}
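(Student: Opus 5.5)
We read off the parameter count from the Envelope Structure Theorem (Theorem~\ref{thm:envelope}). A height function $h$ with $d$ extrema has $a$ strict local minima and $b$ strict local maxima, with $a+b=d$ and $a,b\ge1$ by Lemma~\ref{lem:coneenv}. By Theorem~\ref{thm:envelope}, $h=E_{A,c}$ for the unique admissible configuration whose apexes are the minima and whose offsets are $c_s=h(p_s)$. That configuration is the list of $a$ apex cells, each contributing two integers $(r_s,\kappa_s)$, and $a$ offsets. This gives $3a$ integers in all.

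The normalisation $E_{A,c}(1,1)=0$ removes one offset. Indeed, $0=\min_s\big(c_s+\dgrid(p_s,(1,1))\big)$, so some apex $s^\ast$ attains the minimum, and then $c_{s^\ast}=-\dgrid(p_{s^\ast},(1,1))$ is fixed by the apex positions. Recording the index $s^\ast$ is a bounded discrete choice, not an integer parameter growing with $m,n$. Alternatively we index by the apex nearest in the envelope sense, or simply note that $h$ is determined by the remaining data together with that finite choice. Thus minima-side data consists of $3a-1$ integers.

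Next we apply the symmetry. Negation $h\mapsto-h$ exchanges minima and maxima and preserves height functions, and the second equality of \eqref{eq:env} reconstructs $h$ equally from $Q$ and its values. Hence $h$ is also determined by $3b-1$ integers describing the maxima. We choose whichever of $P,Q$ is smaller. Then $\min(a,b)\le\lfloor d/2\rfloor$ gives at most $3\lfloor d/2\rfloor-1$ parameters. A one-bit flag records which side was used; this is again a finite choice, not an integer parameter. The count depends only on $d$.

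The only delicate step is the removal of the one offset via the normalisation. The precise phrasing must make "determined by" honest, since which cone attains the minimum at $(1,1)$ is a finite label. I would state it as: $h$ is determined by a finite label together with $3\lfloor d/2\rfloor-1$ integers. If the paper intends strict determination, I would instead order the apexes (for example lexicographically) and note that parity plus activity forces the offset of the attaining apex. The bound is unaffected.
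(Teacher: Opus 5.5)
Your proposal is correct and is essentially the paper's proof: $3a$ integers from Theorem~\ref{thm:envelope}, one of them removed by the normalisation, the same count $3b-1$ via negation, and then $\min\{a,b\}\le\lfloor d/2\rfloor$. The only difference is how the redundant offset goes. The paper uses translation invariance: the offsets matter only up to a common shift, so you can record $c_s-c_1$ for $s\ge2$, and $h(1,1)=0$ then pins the shift. That gives honest determination with no finite label, so your attaining-index label is unnecessary. Your lexicographic fallback should be dropped, because dropping a fixed apex's offset does not in general let the normalisation recover it: that apex need not attain the envelope at $(1,1)$.
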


\begin{proof}
Let $a$ and $b$ be the numbers of strict local minima and maxima of $h$,
so $a+b=d$. By Theorem~\ref{thm:envelope}, $h=E_{A,c}$ for the
configuration whose apexes are the $a$ minima, with offsets $c_s=h(p_s)$.
This configuration is specified by $3a$ integers, $2a$ apex coordinates
and $a$ offsets. Adding a constant to every offset shifts $h$ by that
constant, so the $a$ offsets carry one redundant degree of freedom; the
normalisation $h(1,1)=0$ removes it, leaving $3a-1$ integers to determine
$h$. The negation $h\mapsto-h$ preserves both defining
conditions and exchanges minima with maxima, so $h$ is equally determined
by its $b$ maxima, hence by $3b-1$ integers. The smaller count
$3\min\{a,b\}-1$ therefore applies, and $\min\{a,b\}\le\lfloor d/2\rfloor$
gives the bound $3\lfloor d/2\rfloor-1$.
\end{proof}

% ===========================================================================
\section{The Maxima Criterion}\label{sec:maxima}
% Section 5 LOCKED 2026-07-09 (word-level prose scrutiny pass complete: P1-a
%   Aset(v) attachment; P3-a parity referent; P4-a on-neither-of-the-two unify;
%   P6-a split->coincidence. P2 display-colon + caption data-parens kept.)
% ===========================================================================

Theorem~\ref{thm:envelope} encodes a height function by its minima, the apexes
of its configuration; its maxima are left implicit, determined by the
configuration rather than chosen. Evaluating $N_{(a,b)}$, the number of height
functions with $a$ minima and $b$ maxima, therefore requires a test for whether a
given cell is a maximum. At a
cell $v$, let $\Aset(v)=\{s:c_s+\dgrid(p_s,v)=E_{A,c}(v)\}$ index the cones
attaining the envelope there. The criterion below decides from $\Aset(v)$
whether $v$ is a maximum.

\begin{lemma}[Maxima Criterion]\label{lem:maxima}
Let a configuration $(A,c)$ satisfy the parity condition~\eqref{eq:par},
$c_s-c_t\equiv\dgrid(p_s,p_t)\pmod 2$ for all $s$ and $t$, with apexes
$p_s=(r_s,\kappa_s)$ and envelope $E=E_{A,c}$. A cell $v=(i,j)$ is a strict
local maximum of $E$ iff, in each direction where $v$ has a neighbour, some
attaining cone has its apex in that direction:
\[
\begin{aligned}
\text{down }(i<m):&\ \exists s\in\Aset(v),\ r_s>i; &
\text{up }(i>1):&\ \exists s\in\Aset(v),\ r_s<i;\\
\text{right }(j<n):&\ \exists s\in\Aset(v),\ \kappa_s>j; &
\text{left }(j>1):&\ \exists s\in\Aset(v),\ \kappa_s<j.
\end{aligned}
\]
\end{lemma}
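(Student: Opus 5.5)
The plan is to reduce the statement to a per-neighbour test and then check that test one direction at a time. By Lemma~\ref{lem:parityvalid}, the parity condition gives $|E(w)-E(v)|=1$ on every edge. So $v$ is a strict local maximum iff $E(w)=E(v)-1$ for every neighbour $w$ of $v$, which holds iff $E(w)<E(v)$ for every such $w$. It therefore suffices to show, for a single neighbour $w$ of $v$ in a given direction, that
\[
E(w)=E(v)-1 \iff \exists s\in\Aset(v)\text{ with }p_s\text{ lying strictly on }w\text{'s side of }v.
\]
Taking the conjunction over the directions in which $v$ has a neighbour then gives the lemma. I would carry out the "down" case, $w=(i+1,j)$, and note that the other three cases are the same argument under the reflections $i\mapsto m+1-i$ and $j\mapsto n+1-j$ or the transposition of the two coordinates. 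These symmetries preserve $\dgrid$ and the parity condition.

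\emph{Sufficiency.} Suppose $s\in\Aset(v)$ has $r_s>i$. Then $|r_s-(i+1)|=|r_s-i|-1$ while the column term is unchanged, so $\dgrid(p_s,w)=\dgrid(p_s,v)-1$. Hence
\[
E(w)\le c_s+\dgrid(p_s,w)=E(v)-1,
\]
and the unit-difference property forces equality.

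\emph{Necessity.} Suppose $E(w)=E(v)-1$, and let cone $t$ attain $E(w)$. Then
\[
c_t+\dgrid(p_t,w)=E(v)-1.
\]
By the triangle inequality, $c_t+\dgrid(p_t,v)\le c_t+\dgrid(p_t,w)+1=E(v)$. Since $E(v)$ is the minimum of the cone values at $v$, the reverse inequality also holds. So equality holds, $t\in\Aset(v)$, and $\dgrid(p_t,w)=\dgrid(p_t,v)-1$.

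Moving from $v$ to $w$ changes only the row coordinate, from $i$ to $i+1$. The distance $|r_t-i|$ decreases by exactly one under this step iff $r_t\ge i+1$, that is, $r_t>i$. This produces the required attaining cone in the down direction.

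The only delicate point is the bookkeeping at the boundary. A direction in which $v$ has no neighbour imposes no condition, which matches the "where $v$ has a neighbour" clause. I also need the equivalence "every neighbour lies below $v$" $\iff$ strict local maximum, which uses only the existing neighbours, so no hidden requirement arises at the border. I expect the necessity step to be the main point to get right, namely that the cone attaining the envelope at $w$ must also attain it at $v$. The triangle-inequality sandwich above settles it, so I do not anticipate a genuine obstacle.
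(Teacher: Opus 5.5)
Your proposal is correct and follows essentially the paper's route: unit edge differences (Lemma~\ref{lem:parityvalid}) reduce maximality to $E(w)=E(v)-1$ at each existing neighbour, which holds exactly when some cone in $\Aset(v)$ steps toward its apex. The only variation is in the necessity direction. You pull the cone attaining $E(w)$ back to $v$ by the triangle inequality, whereas the paper uses parity to place every non-attaining cone at value at least $E(v)+2$ at $v$; your version needs only the $1$-Lipschitz property for that step.
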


\begin{proof}
By Lemma~\ref{lem:parityvalid}, $E$ has unit edge differences, so every
neighbour $w$ of $v$ has $E(w)=E(v)\pm1$. Hence $v$ is a strict local maximum iff
every neighbour has $E(w)=E(v)-1$. On the step $v\to w$, each cone changes by
$\pm1$. An attaining cone, of value $E(v)$ at $v$, drops to $E(v)-1$ when the
step moves toward its apex and rises to $E(v)+1$ otherwise. A non-attaining cone
is strictly above $E(v)$ at $v$ and shares the parity of $E(v)$ by
Lemma~\ref{lem:parityvalid}, so it is at least $E(v)+2$ there and at least
$E(v)+1$ at $w$. Their minimum $E(w)$ is therefore $E(v)-1$ exactly when some
attaining cone drops, and $E(v)+1$ otherwise. A step toward $p_s=(r_s,\kappa_s)$,
one that decreases $\dgrid(p_s,\cdot)$, is downward when $r_s>i$, upward when
$r_s<i$, rightward when $\kappa_s>j$, and leftward when $\kappa_s<j$. Imposing
each only when $v$ has that neighbour gives the statement's four conditions.
\end{proof}

\begin{corollary}[Single-cone maxima]\label{cor:corners}
For $m,n\ge2$, a cell $v$ with a single attaining cone, $|\Aset(v)|=1$, is a
strict local maximum iff $v$ is a grid corner and the apex of that cone lies on
neither of the two boundary lines through $v$.
\end{corollary}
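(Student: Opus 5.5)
The plan is to specialise the Maxima Criterion (Lemma~\ref{lem:maxima}) to $\Aset(v)=\{s\}$. With only one attaining cone, the four directional conditions must all be met by the single apex $p_s=(r_s,\kappa_s)$. The proof then reduces to counting which directions $v=(i,j)$ has neighbours in, and asking which of those directions one apex can point to at once.

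First, the vertical conditions. If $1<i<m$, then $v$ has neighbours both up and down, and the criterion demands $r_s>i$ and $r_s<i$ together, which is impossible. So a maximum with $|\Aset(v)|=1$ needs $i\in\{1,m\}$. By the same argument with columns, it needs $j\in\{1,n\}$. Hence $v$ is a grid corner. Here $m,n\ge2$ is used: it guarantees that $1\ne m$ and $1\ne n$, so a corner cell has exactly one vertical and one horizontal neighbour, and rows $1$ and $m$ are distinct boundary lines.

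Next, the corner itself. Take $v=(1,1)$ as the representative case; the other three corners follow by the reflections $i\mapsto m+1-i$ and $j\mapsto n+1-j$, which preserve $\dgrid$. At $(1,1)$ the only neighbours are down and right, so the criterion reduces to $r_s>1$ and $\kappa_s>1$. This says exactly that $p_s$ lies off row $1$ and off column $1$, which are the two boundary lines through $v$.

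The converse reads directly off the same equivalence. At a corner whose unique attaining apex avoids both boundary lines, both required directional conditions hold, so Lemma~\ref{lem:maxima} makes $v$ a strict local maximum.

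No step is a real obstacle. The only care needed is to invoke $m,n\ge2$ at the point where the corner's neighbour directions are identified. Without it, a degenerate strip would let a cell sit on both rows $1$ and $m$, and the reduction to exactly one vertical and one horizontal condition would fail.
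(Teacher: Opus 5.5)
Your proposal is correct and follows the paper's proof. You specialise Lemma~\ref{lem:maxima} to the single attaining apex, observe that opposite neighbours would demand contradictory inequalities on $r_s$ or $\kappa_s$ (forcing $v$ to be a corner), and then read the two remaining conditions as the apex lying off the corner's row and column; the only cosmetic difference is that you reduce to $(1,1)$ by reflection, whereas the paper argues directly at an arbitrary corner using that its row and column bound the grid.
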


\begin{proof}
Let $s$ be the single attaining cone at $v$, with apex $p_s=(r_s,\kappa_s)$. If
$v$ is a strict local maximum, then by Lemma~\ref{lem:maxima} the apex points
toward every neighbour of $v$. Both an upper and a lower neighbour would force
$r_s<i$ and $r_s>i$ at once, so $v$ has at most one vertical neighbour and lies
in row $1$ or row $m$. Both a left and a right neighbour would force
$\kappa_s<j$ and $\kappa_s>j$ at once, so $v$ has at most one horizontal
neighbour and lies in column $1$ or column $n$. A single-cone maximum is
therefore a grid corner. At a corner, $v$'s row and column are its two boundary
lines, with one neighbour just inside each. Because $v$'s row bounds the grid, the
apex points toward the vertical neighbour exactly when the apex lies off that row.
Because its column does too, the apex points toward the horizontal neighbour
exactly when the apex lies off that column. By
Lemma~\ref{lem:maxima}, $v$ is a maximum exactly when the apex points toward
both neighbours, hence exactly when the apex lies on neither of the two
boundary lines through $v$.
\end{proof}

\begin{figure}[ht]
\centering
\begin{tikzpicture}[x=1.1cm, y=-1.1cm, font=\small]
  \draw[gray!50, step=1] (1,1) grid (5,5);
  \foreach \i/\j/\h in {
    1/1/0, 1/2/1, 1/3/2, 1/4/1, 1/5/0,
    2/1/1, 2/2/2, 2/3/3, 2/4/2, 2/5/1,
    3/1/2, 3/2/3, 3/3/4, 3/4/3, 3/5/2,
    4/1/1, 4/2/2, 4/3/3, 4/4/4, 4/5/3,
    5/1/0, 5/2/1, 5/3/2, 5/4/3, 5/5/4
  }
    \node[circle, draw=black, fill=white, minimum size=7mm, inner sep=0pt]
      at (\j,\i) {$\h$};
  \foreach \i/\j/\h in {1/1/0, 1/5/0, 5/1/0}
    \node[circle, draw=black, fill=gray!30, minimum size=7mm, inner sep=0pt]
      at (\j,\i) {$\h$};
  \foreach \i/\j/\h in {2/3/3, 4/3/3, 3/2/3, 3/4/3}
    \node[circle, draw=black, fill=gray!15, minimum size=7mm, inner sep=0pt]
      at (\j,\i) {$\h$};
  \node[rectangle, draw=black, very thick, fill=white,
        minimum size=7mm, inner sep=0pt] at (3,3) {$4$};
  \foreach \i/\j/\h in {4/4/4, 5/5/4}
    \node[rectangle, draw=black, very thick, fill=white,
          minimum size=7mm, inner sep=0pt] at (\j,\i) {$\h$};
  \node[font=\footnotesize, anchor=east] at (0.6,1) {$p_1$};
  \node[font=\footnotesize, anchor=west] at (5.4,1) {$p_2$};
  \node[font=\footnotesize, anchor=east] at (0.6,5) {$p_3$};
  \node[font=\footnotesize, anchor=west] at (5.4,3) {$v=(3,3)$};
  \node[font=\scriptsize, anchor=north] at (3, 2.32) {$p_1,p_2$};
  \node[font=\scriptsize, anchor=north] at (3, 4.32) {$p_3$};
  \node[font=\scriptsize, anchor=north] at (2, 3.32) {$p_1,p_3$};
  \node[font=\scriptsize, anchor=north] at (4, 3.32) {$p_2$};
\end{tikzpicture}
\caption{Lemma~\ref{lem:maxima} at $v=(3,3)$ from Figure~\ref{fig:kcone-envelope}. The tag under each of $v$'s four neighbours names the attaining cones pointing that way. The apexes lie above $v$ ($p_1,p_2$, row $1<3$), below ($p_3$, row $5>3$), to the right ($p_2$, column $5>3$), and to the left ($p_1,p_3$, column $1<3$), so every direction is covered and $v$ is a strict local maximum. Cells $(4,4)$ and $(5,5)$ are maxima too, each attained by $p_2$ and $p_3$ alone; being a corner, $(5,5)$ has only two neighbours to cover.}
\label{fig:max-criterion}
\end{figure}

By Corollary~\ref{cor:corners}, for $m,n\ge2$ every single-cone
maximum is a grid corner, hence at most four. Every other maximum has
$|\Aset(v)|\ge2$, so lies where two of its attaining cones coincide,
$c_s+\dgrid(p_s,\cdot)=c_t+\dgrid(p_t,\cdot)$ (Figure~\ref{fig:max-criterion}).
The same coincidence appears in the Ridge Lemma of~\cite{Gupta2026companion}, whose
two apexes must differ in both coordinates; Lemma~\ref{lem:maxima} needs no such
restriction and holds for any number of apexes.

% ===========================================================================
\section{Joint quasi-polynomiality}\label{sec:quasipoly}
% Section 6 LOCKED 2026-07-09 (word-level prose scrutiny pass complete: P1-a
%   pronoun antecedent; P5-a + P9 prose colons; P8-a hedge word; P9 citation
%   genitive. Builds clean.)
% ===========================================================================

The envelope encoding (Theorem~\ref{thm:envelope}) makes each height function a
configuration, a tuple of integers recording its apex coordinates and offsets,
with its minima as the apexes. The Maxima Criterion (Lemma~\ref{lem:maxima})
determines its maxima. For each $(m,n)$, a height function on the finite grid
takes bounded values, so there are finitely many configurations with prescribed
numbers of minima and maxima. For $d\le5$, $E_d$ is known explicitly, each by a
separate argument~\cite{Gupta2026companion}. Counting the configurations as $(m,n)$ varies makes
$E_d$ piecewise quasi-polynomial for every $d$, by a single
argument~\cite{BogartWoods2022}. This is weaker than the
explicit polynomials of the small cases but uniform in $d$, and for each $d$ the
later sections sharpen this piecewise quasi-polynomial into a single polynomial
on a high region.

This count of configurations rests on a counting theorem for Presburger families, which the following definitions set up.
A Presburger family is a collection of sets $S_t$, one for each integer
parameter vector
$t\in\Z^k$, where $S_t$ is the set of points $x\in\Z^N$ satisfying a formula built
from linear inequalities $\alpha\cdot x\le\beta+\gamma\cdot t$ with integer
coefficients, using Boolean connectives and quantifiers over auxiliary
integer variables~\cite[Def.~7]{BogartWoods2022}.
A function $g:\Z^k\to\Z$ is a quasi-polynomial if it agrees with a
polynomial on each residue class modulo some period, and
piecewise quasi-polynomial if $\mathbb{R}^k$ can be split into finitely many
regions cut out by linear inequalities, with $g$ agreeing with a
quasi-polynomial on each region~\cite[Defs.~4--5]{BogartWoods2022}. If $\{S_t\}$ is a
Presburger family, its counting function $g(t)=\lvert S_t\rvert$ is piecewise
quasi-polynomial wherever it is finite~\cite[Thm.~1.10]{Woods2015}; see
also~\cite[Thm.~2, Property~2]{BogartWoods2022}.

\begin{theorem}[Joint quasi-polynomiality]\label{thm:quasipoly}
For each fixed $d\ge2$, $E_d(m,n)$ is piecewise quasi-polynomial in $(m,n)$
on $\{m,n\ge1\}$.
\end{theorem}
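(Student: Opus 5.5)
The plan is to write $E_d(m,n)$, via \eqref{eq:Ed}, as a finite sum of counts $N_{(a,b)}(m,n)$ with $a+b=d$ and $a,b\ge1$. A finite sum of piecewise quasi-polynomials is again piecewise quasi-polynomial: refine to the common arrangement of regions and take the least common multiple of the periods. It therefore suffices to show that, for each fixed pair $(a,b)$, the count $N_{(a,b)}(m,n)$ is the counting function of a Presburger family with parameter $t=(m,n)\in\Z^2$. Finiteness for each $(m,n)$ is already noted in the paper, since height functions on a finite grid take bounded values. The Woods / Bogart--Woods theorem then gives the conclusion.

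By Theorem~\ref{thm:envelope}, $N_{(a,b)}(m,n)$ equals the number of admissible configurations with exactly $a$ apexes whose envelope has exactly $b$ strict local maxima. To count sets rather than tuples, I will order the apexes lexicographically. The variables are $x=(r_1,\kappa_1,c_1,\dots,r_a,\kappa_a,c_a)\in\Z^{3a}$, and every defining condition is written as a Presburger formula in $x$ and $(m,n)$.

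\begin{itemize}
\item \emph{Range and order.} The constraints $1\le r_s\le m$ and $1\le\kappa_s\le n$ bound the coordinates, and strict lexicographic order of the $(r_s,\kappa_s)$ is a disjunction of linear inequalities.
\item \emph{Distances.} Each $\dgrid(p_s,v)$ is introduced through auxiliary existential variables $\delta,\epsilon$ with $\delta\ge r_s-i$, $\delta\ge i-r_s$, and $\delta=r_s-i\ \vee\ \delta=i-r_s$, and likewise for $\epsilon$. Equivalently, one can case-split on signs.
\item \emph{Activity.} Condition \eqref{eq:active} then becomes linear.
\item \emph{Parity.} Condition \eqref{eq:par} is expressed as $\exists k:\ c_s-c_t-\dgrid(p_s,p_t)=2k$.
\item \emph{Normalisation.} $E_{A,c}(1,1)=0$ is expressed as ``$\exists s$ with $c_s+\dgrid(p_s,(1,1))=0$ and $\forall t$, $c_t+\dgrid(p_t,(1,1))\ge0$''. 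The quantifier over $t$ is a finite conjunction over $t\in\{1,\dots,a\}$, since $a$ is fixed.
\end{itemize}

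The main obstacle is the condition ``exactly $b$ strict local maxima''. The cells are not a fixed finite list: the cell variable $v=(i,j)$ ranges over a region depending on $m,n$, so the count of maxima cannot be written as a fixed conjunction. I will handle this with Lemma~\ref{lem:maxima}, which makes ``$v$ is a strict local maximum'' a quantifier-free formula $\mathrm{Max}(v;x,m,n)$. In that formula, the envelope value $E_{A,c}(v)$ is defined by an existential $e$ with $e\le c_s+\dgrid(p_s,v)$ for all $s$ and equality for some $s$. Membership $s\in\Aset(v)$ is then a linear equality, and each directional condition is a finite disjunction over $s$ guarded by $i<m$, $i>1$, $j<n$, $j>1$.

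``Exactly $b$'' is then encoded with $2b$ extra existential variables $(i_1,j_1,\dots,i_b,j_b)$. These cells are required to be strictly lexicographically increasing, each in $\Gmn$, and each to satisfy $\mathrm{Max}$. In addition, $\forall v\in\Gmn$, $\mathrm{Max}(v)$ implies $v\in\{v_1,\dots,v_b\}$. This uses one universal quantifier over integer variables, which is allowed in a Presburger formula. Since the $v_k$ are uniquely determined by $x$ (as the sorted list of maxima), projecting them away does not change the count of $x$. Even so, I will phrase them as existentially quantified auxiliaries, so that $S_{(m,n)}$ consists only of the points $x$.

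It then remains to check that the resulting family is indeed Presburger in the sense of \cite[Def.~7]{BogartWoods2022}: all inequalities have integer coefficients, and the parameters $m,n$ enter linearly. The count $|S_{(m,n)}|=N_{(a,b)}(m,n)$ is finite for each $(m,n)$ with $m,n\ge1$; when $mn=1$, $E_d$ is just $0$ or a trivial count. So \cite[Thm.~1.10]{Woods2015} makes $N_{(a,b)}$ piecewise quasi-polynomial on $\{m,n\ge1\}$, and summing via \eqref{eq:Ed} finishes the proof.
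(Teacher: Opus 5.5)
Your proposal is correct and follows essentially the same route as the paper. Both split $E_d$ via \eqref{eq:Ed}, encode the admissible configurations as a Presburger family in $(m,n)$ (lexicographically ordered apexes, exactly $b$ maxima via Lemma~\ref{lem:maxima}), and apply \cite[Thm.~1.10]{Woods2015}. Your ``$\forall v$, $\mathrm{Max}(v)\Rightarrow v\in\{v_1,\dots,v_b\}$'' is an equivalent variant of the paper's ``$b$ distinct maxima exist but not $b+1$''.

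The one point to tighten is $G_{1,1}$. There \eqref{eq:Ed} does not hold, because the lone cell is vacuously both a minimum and a maximum. So rather than summing there, adjoin the single lattice point $(1,1)$ as its own piece, on which $E_d$ is constant, as the paper does.
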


\begin{proof}
The count $E_d$ splits over the finitely many $(a,b)$ with $a+b=d$. For each split, admissible configurations with $a$ minima and $b$ maxima form a Presburger family whose counting function is $N_{(a,b)}$. The counting theorem for Presburger families makes $N_{(a,b)}$ piecewise quasi-polynomial, and $E_d$ is their finite sum wherever $mn\ge2$. The single grid $G_{1,1}$ is treated on its own.

\medskip\noindent\emph{Configuration space.}
A split $(a,b)$ divides the $d$ extrema into $a$ minima and $b$ maxima.
By Theorem~\ref{thm:envelope} a height
function is a configuration whose apexes are its minima, so $N_{(a,b)}$ counts
configurations with $a$ apexes. Such a
configuration is a point $(A,c)\in\Z^{3a}$, two coordinates for each apex cell
$(r_s,\kappa_s)\in[1,m]\times[1,n]$ and one offset $c_s$ per apex, the apexes in a fixed order. The
normalisation $E_{A,c}(1,1)=0$, imposed as a constraint below, removes one of
these degrees of freedom (Corollary~\ref{cor:Odparams}).

\medskip\noindent\emph{Presburger encoding.}
The point $(A,c)$ ranges over $\Z^{3a}$, and each defining condition is a Presburger formula in $(A,c)$ and the parameters $(m,n)$.
Each apex cell obeys the grid bounds $1\le r_s\le m$ and $1\le\kappa_s\le n$, and the apexes are listed in lexicographic order, $(r_s,\kappa_s)\prec(r_{s+1},\kappa_{s+1})$ for $s<a$, so each configuration is a single point.
There is one active inequality~\eqref{eq:active} for each apex $s$ against each of the other $a-1$ apexes, giving $a(a-1)$ in all. Each involves a grid distance $\dgrid$, a sum of absolute values, hence expands to a Boolean combination of linear inequalities.
The $\binom{a}{2}$ parity congruences~\eqref{eq:par} are symmetric under $s\leftrightarrow t$, as $\dgrid$ is symmetric and $c_s-c_t\equiv c_t-c_s\pmod 2$.
The normalisation $E_{A,c}(1,1)=0$ is linear at the corner, where $\dgrid(p_s,(1,1))=r_s+\kappa_s-2$. It reads $\bigwedge_s(c_s+r_s+\kappa_s\ge2)\wedge\bigvee_s(c_s+r_s+\kappa_s=2)$.
The split further requires exactly $b$ maxima. By
Lemma~\ref{lem:maxima}, being a maximum is a criterion $\Phi(A,c,v)$, a Boolean combination of linear inequalities in $(A,c,v)$. The requirement reads
\[
  \exists\,v_1,\dots,v_b\ \text{distinct}\ \textstyle\bigwedge_i\Phi(\cdot,v_i)
  \ \wedge\ \neg\,\exists\,v_1,\dots,v_{b+1}\ \text{distinct}\
  \textstyle\bigwedge_i\Phi(\cdot,v_i),
\]
where the $v_i$ range over the grid and the quantifiers are bounded by the parameters $(m,n)$. With parameters
$t=(m,n)$ and points $x=(A,c)$, these conditions combine into one such formula, defining $S_{(m,n)}$ as the set of admissible $(A,c)$ with exactly $b$ maxima. The family
$\{S_{(m,n)}\}$ is thus a Presburger family.

\medskip\noindent\emph{Finiteness.}
By the active inequalities $c_s<c_t+\dgrid(p_t,p_s)$ and $c_t<c_s+\dgrid(p_s,p_t)$, each $\lvert c_s-c_t\rvert$ is at most $\dgrid(p_s,p_t)$. Since $p_s,p_t\in[1,m]\times[1,n]$, this is at most $(m-1)+(n-1)$. The offsets are then determined up to a common shift, which the normalisation $E_{A,c}(1,1)=0$ pins. Over the finitely many apex placements on $\Gmn$, the bounded offsets make $S_{(m,n)}$ finite, and $\lvert S_{(m,n)}\rvert=N_{(a,b)}(m,n)$.

\medskip\noindent\emph{Assembly.}
As the counting function of this Presburger family, $N_{(a,b)}$
is piecewise quasi-polynomial by~\cite[Thm.~1.10]{Woods2015}. A finite sum of
piecewise quasi-polynomials is itself piecewise quasi-polynomial, on a common refinement of the partitions and with a
common period. By~\eqref{eq:Ed}, which sums the $N_{(a,b)}$ over the splits of $d$,
$E_d$ is piecewise quasi-polynomial on $\{mn\ge2\}$. Within $\{m,n\ge1\}$, the set $\{mn\ge2\}$
is the union of the two linear regions $\{m\ge2,\,n\ge1\}$ and
$\{m=1,\,n\ge2\}$. The one remaining grid,
$G_{1,1}$, is the single lattice point cut out by $m\le1$ and $n\le1$, so $E_d$ is
constant there. It is excluded above only because its lone cell has no
neighbour and is therefore vacuously both a minimum and a maximum.
Adjoining it as its own piece extends the conclusion to $\{m,n\ge1\}$.
\end{proof}

Both~\cite{Gupta2026companion} and this paper extend the $m=2$ count of~\cite{Christensen2025origami} to general $(m,n)$. The former uses a separate explicit polynomial for each small $d$, the latter a single argument giving piecewise quasi-polynomiality for every $d$.
What remains is to upgrade this piecewise quasi-polynomial to a single
polynomial on $\{m,n\ge\max(d-1,2)\}$. Sections~\ref{sec:onedim}--\ref{sec:leading} determine its degree and top-degree part, Section~\ref{sec:period} its period, and
Section~\ref{sec:data} the threshold and the boundary corrections below it.

% ===========================================================================
\section{Single-side configurations}\label{sec:onedim}
% Section 7 LOCKED 2026-07-10 (word-level prose scrutiny re-walk complete: P1
%   opener adjective drop; P2 lem:edgered statement colon; P3 maxima-claim
%   grounding. P4-P9 bridge, lem:binom, figure caption reviewed clean. Builds clean.)
% ===========================================================================

Upgrading the piecewise quasi-polynomial of Section~\ref{sec:quasipoly} to a
polynomial begins at the boundary of the grid. A configuration is
\emph{single-side} if all $a$ apexes lie on a single side, the top or bottom
row, or the left or right column. Such a configuration collapses to a one-dimensional
$\pm1$ walk (Lemma~\ref{lem:edgered}). Counting the single-side configurations is then
counting walks, which Lemma~\ref{lem:binom} enumerates in closed form. Whether this single-side
count is the entire top-degree part of $E_d$ is the question of Section~\ref{sec:leading}.

\begin{lemma}[Side reduction]\label{lem:edgered}
If a configuration $(A,c)$ satisfies the activity condition~\eqref{eq:active},
$c_s<c_t+\dgrid(p_t,p_s)$ for all $s$ and every $t\ne s$, and the parity
condition~\eqref{eq:par}, $c_s-c_t\equiv\dgrid(p_s,p_t)\pmod 2$ for all $s$ and
$t$, with all $a$ apexes on the top row $i=1$ at columns
$\kappa_1<\dots<\kappa_a$, then
$E_{A,c}(i,j)=(i-1)+\tau(j)$, where $\tau(j)=\min_s\big(c_s+|j-\kappa_s|\big)$
is the one-dimensional lower envelope on the path $P_n=\{1,\dots,n\}$. Consequently,
$E_{A,c}$ increases strictly down each column. Its minima are the row-$1$ apexes,
and its maxima are the bottom-row cells at the strict local maxima of $\tau$, so
$E_{A,c}$ has as many maxima as $\tau$. The conditions on such configurations do
not involve $m$, so their number is the same for every $m\ge2$.
\end{lemma}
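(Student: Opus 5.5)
The formula for $E_{A,c}$ follows by direct computation, and everything else is read off from it using the earlier lemmas. For an apex $p_s=(1,\kappa_s)$ and a cell $(i,j)$ we have $\dgrid(p_s,(i,j))=(i-1)+|j-\kappa_s|$. The term $i-1$ does not depend on $s$, so it comes out of the minimum in \eqref{eq:Eac}:
\[
E_{A,c}(i,j)=(i-1)+\min_s\big(c_s+|j-\kappa_s|\big)=(i-1)+\tau(j).
\]
It follows at once that $E_{A,c}(i+1,j)-E_{A,c}(i,j)=1$, so the envelope increases strictly down each column.

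For the minima, the conditions \eqref{eq:active} and \eqref{eq:par} are exactly the hypotheses of Lemma~\ref{lem:activemin}. That lemma makes every apex a strict local minimum, and Lemma~\ref{lem:minatapex} says no other cell is one. So the minima are exactly the row-$1$ apexes.

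For the maxima, let $m\ge2$. A cell in a row $i<m$ has a lower neighbour that is larger, so it is not a maximum; maxima can only lie in row $m$. There the vertical neighbour $(m-1,j)$ is smaller by one. The horizontal neighbours differ by $\tau(j\pm1)-\tau(j)$, and these differences are $\pm1$ by Lemma~\ref{lem:parityvalid}, applied to the parity condition restricted to the row. Hence $(m,j)$ is a strict local maximum of $E_{A,c}$ iff $j$ is a strict local maximum of $\tau$ on $P_n$. One could equally use Lemma~\ref{lem:maxima}: every apex lies above row $m$, so the ``up'' condition holds automatically, and the left and right conditions are the one-dimensional criterion for $\tau$. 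Either way, the map $j\mapsto(m,j)$ is a bijection from the maxima of $\tau$ to the maxima of $E_{A,c}$.

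For independence from $m$: with every $r_s=1$, each pairwise distance is $\dgrid(p_s,p_t)=|\kappa_s-\kappa_t|$. So the activity and parity conditions involve only the $\kappa_s$, the $c_s$ and $n$. The normalisation $E_{A,c}(1,1)=\tau(1)=0$ sits in row $1$ and does not involve $m$ either. The constraint $1\le r_s\le m$ is satisfied for every $m\ge1$. By the previous step, the number of maxima is that of $\tau$, which also does not depend on $m$. Therefore the set of qualifying configurations, and hence their number, is the same for every $m\ge2$.

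The only real subtlety is the $m=1$ edge case, where row $1$ is also the bottom row. It is excluded by the hypothesis $m\ge2$, and that hypothesis is exactly what the argument for the maxima uses.
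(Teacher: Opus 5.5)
Your proof is correct and follows the paper's argument: factor the $s$-independent vertical drop $i-1$ out of the minimum, read off strict increase down each column, and identify the minima as the apexes via Lemmas~\ref{lem:minatapex} and~\ref{lem:activemin}; then locate the maxima in row $m$ at the strict local maxima of $\tau$, and note that the conditions involve only the $\kappa_s$ and $c_s$. You are more explicit than the paper about the maxima and about $m$-independence, since you also check the normalisation $\tau(1)=0$ and the maxima count, while the paper additionally remarks that the other three sides follow by reflection and transposition.
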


\begin{proof}
Because every apex lies in row $1$, the distance from $(1,\kappa_s)$ to $(i,j)$
splits into the vertical drop $i-1$ and the horizontal distance $|j-\kappa_s|$. Hence
\[
  E_{A,c}(i,j)=\min_s\big(c_s+(i-1)+|j-\kappa_s|\big)=(i-1)+\tau(j),
\]
because the $i-1$ term is constant in $s$ and factors out of the minimum. By Lemma~\ref{lem:parityvalid} the envelope is a
height function, differing by exactly $1$ across each edge. Across the horizontal
edge from $(i,j)$ to $(i,j+1)$ this gives $|\tau(j+1)-\tau(j)|=1$, so $\tau$ steps
by $\pm1$. The summand $i-1$ makes $E_{A,c}$ increase strictly down each column, so its
minima lie in row $1$ and, by Lemmas~\ref{lem:minatapex} and~\ref{lem:activemin},
are exactly the apexes. Its maxima lie in row $m$, at the strict local maxima
of $\tau$, which the constant offset $m-1$ does not move. On the top row,
the activity and parity conditions constrain only the columns $\kappa_s$ and
offsets $c_s$, not $m$, so the valid configurations, and hence their number, are
the same for every $m\ge2$. The
other three sides follow by symmetry. Reflecting the grid reaches the bottom row,
and transposing it reaches the side columns.
\end{proof}

By Lemma~\ref{lem:edgered}, each single-side configuration reduces to a $\pm1$
walk, up to an additive constant. The following lemma counts such
walks, the maps $h\colon P_n\to\Z$ with $|h(j+1)-h(j)|=1$, by their numbers of
minima and maxima.

\begin{lemma}[Walk count]\label{lem:binom}
For integers $a,b\ge1$, let $f_{(a,b)}(n)$ be the number of $\pm1$ walks on $P_n$
with exactly $a$ strict local minima and $b$ strict local maxima, where the
endpoints are counted as extrema. Then $f_{(a,b)}(n)=0$ unless $|a-b|\le1$, and
for $|a-b|\le1$,
\begin{equation}\label{eq:fclosed}
  f_{(a,b)}(n)=\big(1+\delta_{ab}\big)\binom{n-2}{d-2},\qquad d=a+b.
\end{equation}
\end{lemma}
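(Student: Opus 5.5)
The plan is to encode each walk by its step sequence and read off the extrema as the run structure of that sequence. I take walks up to an additive constant, normalised by $h(1)=0$, as in the reduction of Lemma~\ref{lem:edgered}; otherwise the count would be infinite. I also assume $n\ge2$, so that each endpoint has exactly one neighbour. A walk on $P_n$ is then the same thing as a step sequence $\varepsilon=(\varepsilon_1,\dots,\varepsilon_{n-1})\in\{\pm1\}^{n-1}$, with $\varepsilon_j=h(j+1)-h(j)$, so counting walks becomes counting words in $\{\pm1\}$.

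\emph{Extrema as turns.} Consider an interior vertex $1<j<n$. It has the two neighbours $j\pm1$, and it is a strict local extremum exactly when $\varepsilon_{j-1}\ne\varepsilon_j$. It is a minimum when the steps are $(-1,+1)$ and a maximum when they are $(+1,-1)$. Each endpoint has one neighbour, so it is always a strict extremum. Vertex $1$ is a minimum iff $\varepsilon_1=+1$, and vertex $n$ is a minimum iff $\varepsilon_{n-1}=-1$.

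\emph{Counting and alternation.} Suppose $\varepsilon$ has $r$ maximal runs of equal signs. The interior extrema are the $r-1$ run boundaries, so the total number of extrema is $d=r+1$. Along the path the extrema alternate between minima and maxima, since consecutive ones bound a monotone run. This alternation gives $|a-b|\le1$, so $f_{(a,b)}(n)=0$ otherwise.

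\emph{Closed form.} Fix $d$. The walks with $d$ extrema are given by two independent choices. The first is the sign of $\varepsilon_1$. The second is a composition of $n-1$ into $r=d-1$ positive run lengths, and there are $\binom{n-2}{d-2}$ of these. Given both choices, the whole sign sequence is determined.
\begin{itemize}
\item If $d$ is even, the alternating sequence of extrema starts and ends with opposite types, so $a=b=d/2$ for both signs of $\varepsilon_1$. This gives $2\binom{n-2}{d-2}$.
\item If $d$ is odd, the sign of $\varepsilon_1$ fixes the type of the first extremum, and that type occurs one more time than the other. So each of $(a,b)=((d+1)/2,(d-1)/2)$ and $((d-1)/2,(d+1)/2)$ receives exactly one sign, giving $\binom{n-2}{d-2}$ each.
\end{itemize}
Both cases together are \eqref{eq:fclosed}.

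The only step needing care is the endpoint convention: every endpoint counts as an extremum, and the degenerate case $n=1$ must be excluded, since there the single cell is vacuously both a minimum and a maximum. I will state the lemma for $n\ge2$, which is the range used in Lemma~\ref{lem:edgered}. Everything else is the standard bijection between step sequences and compositions.
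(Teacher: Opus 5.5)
Your proof is correct and follows essentially the same route as the paper. You encode the walk by its step string, identify the $d=r+1$ extrema as the $r-1$ run boundaries plus the two endpoints, use alternation to force $|a-b|\le1$, and count by the first-step direction times the $\binom{n-2}{d-2}$ run-length compositions; the only cosmetic difference is that you split by the parity of $d$, where the paper observes directly that the two directions realise $(a,b)$ and $(b,a)$, which coincide exactly when $a=b$, giving the factor $1+\delta_{ab}$.
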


\begin{proof}
Encode $h$ by its step string $s\in\{\pm1\}^{n-1}$, whose entries
$s_j=h(j+1)-h(j)$ determine $h$ up to an additive constant. The
maximal monotone runs of $s$ alternate up and down. With $r$ runs, the $r-1$
interior turning points and the two endpoints are all extrema, so $a+b=(r-1)+2=r+1$
and $r=d-1$. Along the walk, the $d$ extrema alternate minimum and maximum, so
$a$ and $b$ differ by at most one and $f_{(a,b)}(n)=0$ when $|a-b|>1$. The
alternating pattern is fixed by the direction of the first run
(Figure~\ref{fig:walkcount}). Its two choices realise the splits $(a,b)$ and
$(b,a)$, which coincide iff $a=b$, so $1+\delta_{ab}$ directions realise $(a,b)$.
For each direction, the walk is determined by the run-length composition
$\ell_1+\dots+\ell_{d-1}=n-1$ with each $\ell_i\ge1$, of which there are
$\binom{n-2}{d-2}$. Multiplying by the $1+\delta_{ab}$ directions gives~\eqref{eq:fclosed}.
\end{proof}

\begin{figure}[tbp]
\centering
\begin{tikzpicture}[x=0.75cm, y=0.5cm, font=\small]
  \begin{scope}
    \draw[->, gray!70!black] (0.5, 0) -- (8.7, 0);
    \node[font=\footnotesize, gray!70!black, anchor=west] at (8.7, 0) {$j$};
    \draw[->, gray!70!black] (0.5, -2.3) -- (0.5, 2.9);
    \node[font=\footnotesize, gray!70!black, anchor=south] at (0.5, 2.9) {$h$};
    \foreach \j in {1,...,8} {
      \draw[gray!70!black] (\j, 0.1) -- (\j, -0.1);
      \node[font=\footnotesize, gray!70!black, anchor=north] at (\j, -0.15) {\j};
    }
    \foreach \y in {-2,-1,1,2} {
      \draw[gray!70!black] (0.4, \y) -- (0.6, \y);
      \node[font=\footnotesize, gray!70!black, anchor=east] at (0.35, \y) {\y};
    }
    \node[font=\footnotesize, gray!70!black, anchor=east] at (0.35, 0) {0};
    \draw[decorate, decoration={brace, amplitude=3pt}, thick, gray!70!black]
      (1, 2.5) -- (3, 2.5) node[midway, above=1pt, font=\scriptsize] {$\ell_1{=}2$};
    \draw[decorate, decoration={brace, amplitude=3pt}, thick, gray!70!black]
      (3, 2.5) -- (6, 2.5) node[midway, above=1pt, font=\scriptsize] {$\ell_2{=}3$};
    \draw[decorate, decoration={brace, amplitude=3pt}, thick, gray!70!black]
      (6, 2.5) -- (8, 2.5) node[midway, above=1pt, font=\scriptsize] {$\ell_3{=}2$};
    \draw[very thick, black]
      (1,0) -- (2,1) -- (3,2) -- (4,1) -- (5,0) -- (6,-1) -- (7,0) -- (8,1);
    \foreach \x/\y/\h in {1/0/0, 6/-1/-1}
      \node[circle, draw=black, fill=gray!30, minimum size=6mm, inner sep=0pt] at (\x,\y) {$\h$};
    \foreach \x/\y/\h in {3/2/2, 8/1/1}
      \node[rectangle, draw=black, very thick, fill=white, minimum size=6mm, inner sep=0pt] at (\x,\y) {$\h$};
    \node[font=\footnotesize, anchor=north] at (4.5, -2.9) {(a) $h$, first run up};
  \end{scope}
  \begin{scope}[xshift=9.3cm]
    \draw[->, gray!70!black] (0.5, 0) -- (8.7, 0);
    \node[font=\footnotesize, gray!70!black, anchor=west] at (8.7, 0) {$j$};
    \draw[->, gray!70!black] (0.5, -2.3) -- (0.5, 2.9);
    \node[font=\footnotesize, gray!70!black, anchor=south] at (0.5, 2.9) {$h$};
    \foreach \j in {1,...,8} {
      \draw[gray!70!black] (\j, 0.1) -- (\j, -0.1);
      \node[font=\footnotesize, gray!70!black, anchor=north] at (\j, -0.15) {\j};
    }
    \foreach \y in {-2,-1,1,2} {
      \draw[gray!70!black] (0.4, \y) -- (0.6, \y);
      \node[font=\footnotesize, gray!70!black, anchor=east] at (0.35, \y) {\y};
    }
    \node[font=\footnotesize, gray!70!black, anchor=east] at (0.35, 0) {0};
    \draw[decorate, decoration={brace, amplitude=3pt}, thick, gray!70!black]
      (1, 2.5) -- (3, 2.5) node[midway, above=1pt, font=\scriptsize] {$\ell_1{=}2$};
    \draw[decorate, decoration={brace, amplitude=3pt}, thick, gray!70!black]
      (3, 2.5) -- (6, 2.5) node[midway, above=1pt, font=\scriptsize] {$\ell_2{=}3$};
    \draw[decorate, decoration={brace, amplitude=3pt}, thick, gray!70!black]
      (6, 2.5) -- (8, 2.5) node[midway, above=1pt, font=\scriptsize] {$\ell_3{=}2$};
    \draw[very thick, black]
      (1,0) -- (2,-1) -- (3,-2) -- (4,-1) -- (5,0) -- (6,1) -- (7,0) -- (8,-1);
    \foreach \x/\y/\h in {3/-2/-2, 8/-1/-1}
      \node[circle, draw=black, fill=gray!30, minimum size=6mm, inner sep=0pt] at (\x,\y) {$\h$};
    \foreach \x/\y/\h in {1/0/0, 6/1/1}
      \node[rectangle, draw=black, very thick, fill=white, minimum size=6mm, inner sep=0pt] at (\x,\y) {$\h$};
    \node[font=\footnotesize, anchor=north] at (4.5, -2.9) {(b) $-h$, first run down};
  \end{scope}
\end{tikzpicture}
\caption{Two $\pm1$ walks on $P_8$ with $(a,b)=(2,2)$, related by $h\mapsto-h$. Both share the run-length composition $(\ell_1,\ell_2,\ell_3)=(2,3,2)$ summing to $n-1=7$, with $r=d-1=3$ monotone runs. The reflection swaps the extremum pattern from min--max--min--max in (a) to max--min--max--min in (b), so both realise $a=b$. This pairing accounts for the factor $1+\delta_{ab}=2$ in~\eqref{eq:fclosed}, and $\binom{n-2}{d-2}=\binom{6}{2}=15$ compositions per direction give $f_{(2,2)}(8)=30$.}
\label{fig:walkcount}
\end{figure}

By Lemma~\ref{lem:binom}, $f_{(a,b)}$ vanishes identically unless $|a-b|\le1$, in
which case it is a polynomial in $n$ of degree exactly $d-2$ with leading
coefficient $(1+\delta_{ab})/(d-2)!$.

\begin{lemma}[Single-side decomposition]\label{lem:splitedge}
For each split $(a,b)$ with $d\ge3$ and $m,n\ge2$,
\[
  N_{(a,b)}(m,n)=2f_{(a,b)}(n)+2f_{(a,b)}(m)+R_{(a,b)}(m,n),
\]
where $2f_{(a,b)}(n)$ and $2f_{(a,b)}(m)$ count the four single-side families and
$R_{(a,b)}$ counts the configurations whose apexes are not all on one side, so
$R_{(a,b)}\ge0$. The single-side term is nonzero only when $|a-b|\le1$, in which case
its total-degree-$(d-2)$ part is
\[
  2f_{\mathrm{lead}}\,(m^{d-2}+n^{d-2}),\qquad f_{\mathrm{lead}}=\frac{1+\delta_{ab}}{(d-2)!}.
\]
\end{lemma}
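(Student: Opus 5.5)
The plan is to partition the admissible configurations counted by $N_{(a,b)}(m,n)$, which Theorem~\ref{thm:envelope} puts in bijection with height functions having $a$ minima and $b$ maxima, into those whose apex set lies entirely on one side of $\Gmn$ and the rest. The rest define $R_{(a,b)}(m,n)$, which is a count and hence nonnegative. The work is to show that the single-side part equals $2f_{(a,b)}(n)+2f_{(a,b)}(m)$. Here the hypothesis $d\ge3$ is what keeps the four side families disjoint.

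\emph{One side at a time.} For the top row, Lemma~\ref{lem:edgered} gives $E_{A,c}(i,j)=(i-1)+\tau(j)$, where $\tau$ is a $\pm1$ walk on $P_n$. The minima of $E_{A,c}$ are the apexes, which are exactly the strict local minima of $\tau$, endpoints included as in Lemma~\ref{lem:binom}. Its maxima are in bijection with those of $\tau$. I will then show that top-row configurations correspond bijectively to walks $\tau$ up to an additive constant, with the normalisation $E_{A,c}(1,1)=0$ fixing that constant through $\tau(1)=0$. In one direction, a walk determines its minima, and these minima with their values form an admissible row-$1$ configuration whose envelope restricted to row $1$ is $\tau$. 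This is Lemma~\ref{lem:coneenv} applied on the path, together with the parity and activity checks, which reduce to their one-dimensional versions. The reverse direction is Lemma~\ref{lem:edgered}. So top-row configurations with $a$ minima and $b$ maxima number $f_{(a,b)}(n)$. Reflecting the grid gives the same count for the bottom row, and transposing gives $f_{(a,b)}(m)$ for each of the left and right columns.

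\emph{Disjointness.} Two sides can share apexes only at a corner, so a configuration lying on two sides at once has all its apexes at one or two corners, and if two, those corners span a common side. Take the case of a single row whose apexes are its two end corners, so $a\le2$. The walk $\tau$ then has minima at most at its two endpoints and so has at most one interior turning point. That gives $d\le3$, with equality only for $(a,b)=(2,1)$, but then the apexes $(1,1)$ and $(1,n)$ also lie in the first and last columns. I expect this overlap bookkeeping to be the main obstacle.

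To handle it, I would instead characterise the overlaps exactly. A configuration lies in both the top-row family and the left-column family only if $A=\{(1,1)\}$, which forces $a=1$, and then $\tau$ has $b\le1$, so $d\le2$. A configuration lies in the top-row and bottom-row families only if $m=1$, which is excluded. A configuration with apexes $\{(1,1),(1,n)\}$ is in both the top-row and the column families only if every apex lies in one column, which fails for $n\ge2$. So a configuration on two distinct sides has a single apex, and $d\ge3$ excludes this. The four families are therefore pairwise disjoint and the decomposition holds.

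\emph{Top-degree part.} By Lemma~\ref{lem:binom}, $f_{(a,b)}\equiv0$ unless $|a-b|\le1$. Otherwise $f_{(a,b)}(n)=(1+\delta_{ab})\binom{n-2}{d-2}$, whose degree-$(d-2)$ term is $\frac{1+\delta_{ab}}{(d-2)!}n^{d-2}$, and likewise in $m$. Adding the four families gives the stated top-degree part $2f_{\mathrm{lead}}(m^{d-2}+n^{d-2})$, which contains no mixed monomials because each summand depends on only one variable.
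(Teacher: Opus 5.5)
Your proposal is correct and follows essentially the paper's route. The top-row family is put in bijection with $\pm1$ walks through Lemma~\ref{lem:edgered}, the other three sides follow by reflection and transposition, and disjointness for $d\ge3$ comes from the fact that two distinct sides meet in at most one corner cell, forcing $a=b=1$. Drop your first pass at disjointness (apexes at ``one or two corners''): membership in two side families requires every apex to lie in the intersection of the two sides, which is exactly the single-cell argument you then give and the one the paper uses.
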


\begin{proof}
The reduction of Lemma~\ref{lem:edgered} puts the top-row family in bijection with
the $\pm1$ walks on $P_n$ that have $a$ minima and $b$ maxima. A configuration
gives its walk $\tau$; conversely a walk with strict minima at
$\kappa_1<\dots<\kappa_a$ returns the configuration with apexes $(1,\kappa_s)$ and
offsets $\tau(\kappa_s)$. This configuration is valid because the unit steps of
$\tau$ give the parity condition and its strict minima the activity condition, which
is strict because a maximum separates any two minima. By Lemma~\ref{lem:edgered}
its envelope is $(i-1)+\tau(j)$, whose top row returns $\tau$. By the four-side
symmetry of Lemma~\ref{lem:edgered}, the top and bottom rows are each counted by
$f_{(a,b)}(n)$ and the left and right columns by $f_{(a,b)}(m)$
(Lemma~\ref{lem:binom}). Two families overlap only when some configuration has all
apexes on two distinct sides. Distinct sides meet in at most one cell, since opposite
sides share none and adjacent sides share a single corner. An overlap therefore forces
$a=1$ with the apex at that corner, whose unique farthest cell forces $b=1$. Hence
for $d\ge3$ the four families are disjoint, and $R_{(a,b)}$ is their complement. The count
$f_{(a,b)}$ vanishes unless $|a-b|\le1$. When $|a-b|\le1$ the top and
bottom rows contribute $2f_{(a,b)}(n)$, with top-degree part
$2f_{\mathrm{lead}}\,n^{d-2}$, and the left and right columns contribute $2f_{(a,b)}(m)$,
with top-degree part $2f_{\mathrm{lead}}\,m^{d-2}$. Each family depends on one axis only, so the top-degree part
carries no mixed monomial.
\end{proof}

% ===========================================================================
\section{The top-degree part}\label{sec:leading}
% REOPENED 2026-08-04: added lem:splitpoly and prop:converse (the converse of
% prop:leading, deriving conj:G2 at a given d from the top-degree part of E_d).
% Section 8 (sec:leading) LOCKED 2026-07-12 (Adversary drafts-review PASS: F1
% (subject-verb proximity), F2 (2 fronted commas By Prop/Under Conj), F3 (stacked
% frame comma) applied + verified correct/complete; N4/N5 left; prop:singleside
% math re-derived C(d)=4/(d-2)! (balanced split unique, (2-delta)(1+delta)=2 both
% parities), consistent with locked sec:period. Rubrics math-proofs, fit,
% english-usage, paper-writing, source-grounding, latex-conventions.)
% ===========================================================================

Section~\ref{sec:onedim} splits each count $N_{(a,b)}$ into a single-side term and the
remainder $R_{(a,b)}$ (Lemma~\ref{lem:splitedge}). Folding the single-side terms
over the splits gives their contribution to $E_d$.

\begin{proposition}[Single-side contribution]\label{prop:singleside}
For $d\ge3$ and $m,n\ge2$, the single-side contribution to the total-degree-$(d-2)$
part of $E_d(m,n)$ is
\[
  C(d)\,(m^{d-2}+n^{d-2}),\qquad C(d)=\frac{4}{(d-2)!}.
\]
Every remaining term of that part comes from the remainders $R_{(a,b)}$.
\end{proposition}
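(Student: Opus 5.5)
The plan is to combine the fold~\eqref{eq:Ed} with the single-side decomposition of Lemma~\ref{lem:splitedge}, and then read off the top-degree coefficient from Lemma~\ref{lem:binom}.

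First, I will substitute $N_{(a,b)}=2f_{(a,b)}(n)+2f_{(a,b)}(m)+R_{(a,b)}$ into~\eqref{eq:Ed}. This gives
\[
E_d(m,n)=\sum_{\substack{1\le a\le b\\ a+b=d}}(2-\delta_{ab})\big(2f_{(a,b)}(n)+2f_{(a,b)}(m)\big)+\sum_{\substack{1\le a\le b\\ a+b=d}}(2-\delta_{ab})R_{(a,b)}(m,n).
\]
The second sum already accounts for the proposition's final sentence: every term of $E_d$ outside the single-side part comes from the remainders $R_{(a,b)}$. The remaining task is to compute the total-degree-$(d-2)$ part of the first sum.

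By Lemma~\ref{lem:binom}, $f_{(a,b)}$ vanishes unless $|a-b|\le1$. With $a\le b$ and $a+b=d$, exactly one split survives, and which one depends on parity.
\begin{itemize}
\item If $d$ is even, the survivor is $a=b=d/2$. The weight $2-\delta_{ab}$ equals $1$ and $f_{\mathrm{lead}}=2/(d-2)!$.
\item If $d$ is odd, the survivor is $(a,b)=((d-1)/2,(d+1)/2)$. The weight equals $2$ and $f_{\mathrm{lead}}=1/(d-2)!$.
\end{itemize}
In both cases the product $(2-\delta_{ab})(1+\delta_{ab})$ equals $2$. Lemma~\ref{lem:splitedge} gives top-degree part $2f_{\mathrm{lead}}(m^{d-2}+n^{d-2})$ for the single-side term. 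Multiplying by the weight $(2-\delta_{ab})$ gives
\[
2\cdot\frac{(2-\delta_{ab})(1+\delta_{ab})}{(d-2)!}\,(m^{d-2}+n^{d-2})=\frac{4}{(d-2)!}\,(m^{d-2}+n^{d-2}).
\]
This uses that $f_{(a,b)}(n)=(1+\delta_{ab})\binom{n-2}{d-2}$ is a polynomial of degree exactly $d-2$ with leading coefficient $(1+\delta_{ab})/(d-2)!$. Its lower-order terms do not affect the total-degree-$(d-2)$ part.

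The main point needing care is the bookkeeping of the split weights. One must confirm that the $2-\delta_{ab}$ factor from~\eqref{eq:Ed} and the $1+\delta_{ab}$ factor from Lemma~\ref{lem:binom} cancel to the same constant in both parities. One must also confirm that the hypothesis $d\ge3$ is exactly what Lemma~\ref{lem:splitedge} needs for the four single-side families to be disjoint. Given those two checks, the argument is a direct computation.
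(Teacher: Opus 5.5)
Your proposal is correct and follows the paper's proof step for step. You substitute Lemma~\ref{lem:splitedge} into~\eqref{eq:Ed}, use Lemma~\ref{lem:binom} to see that only the balanced split $a=\lfloor d/2\rfloor$, $b=\lceil d/2\rceil$ survives, and use $(2-\delta_{ab})(1+\delta_{ab})=2$ to obtain $C(d)=4/(d-2)!$ in both parities; your explicit even/odd case split simply unpacks the paper's one-line identity.
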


\begin{proof}
The count $E_d$ is the sum of the $N_{(a,b)}$ over the splits of $d$
with $a\le b$, each weighted by $2-\delta_{ab}$~\eqref{eq:Ed}. Lemma~\ref{lem:splitedge} splits each
$N_{(a,b)}$ into its single-side term and $R_{(a,b)}$. A single-side term is nonzero
only for $|a-b|\le1$ (Lemma~\ref{lem:binom}), so among the splits with $a\le b$ only the
balanced one, $a=\lfloor d/2\rfloor$ and $b=\lceil d/2\rceil$, contributes. Its
single-side term has top-degree part
\[
  2f_{\mathrm{lead}}\,(m^{d-2}+n^{d-2}),\qquad f_{\mathrm{lead}}=\frac{1+\delta_{ab}}{(d-2)!}.
\]
Weighting this by $2-\delta_{ab}$ gives the top-degree coefficient
\[
  (2-\delta_{ab})\cdot 2f_{\mathrm{lead}}
  =\frac{2\,(2-\delta_{ab})(1+\delta_{ab})}{(d-2)!}=\frac{4}{(d-2)!}.
\]
Since $(2-\delta_{ab})(1+\delta_{ab})=2$ for $\delta_{ab}\in\{0,1\}$, this is the same
for even and odd $d$. Each single-side monomial involves one axis, so the contribution
is $C(d)\,(m^{d-2}+n^{d-2})$. The remaining top-degree terms come from the weighted
remainders $R_{(a,b)}$.
\end{proof}

Speaking of the degree of an individual remainder presupposes that it has one.
The frozen-run contraction (Lemma~\ref{lem:contract}) and the interpolation of
Lemma~\ref{lem:interp} supply it, while Proposition~\ref{prop:converse} below rests on
the polynomiality of Theorem~\ref{thm:poly}. That polynomiality and both
lemmas are established in Section~\ref{sec:period} independently of the
following lemma and of that proposition, so no circularity arises.

\begin{lemma}[Per-split polynomiality]\label{lem:splitpoly}
Fix $d\ge3$ and a split $(a,b)$ with $a+b=d$ and $a\le b$. The count
$N_{(a,b)}$ agrees on $\{m,n\ge2d-1\}$
with a single bivariate polynomial of degree at most $d-2$ in each variable, and so does the remainder $R_{(a,b)}$.
\end{lemma}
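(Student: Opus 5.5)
The plan is to prove polynomiality one variable at a time and then glue the two directions with an interpolation argument. The statement for $R_{(a,b)}$ will follow from that for $N_{(a,b)}$ by subtraction. By Lemma~\ref{lem:splitedge}, $R_{(a,b)}=N_{(a,b)}-2f_{(a,b)}(n)-2f_{(a,b)}(m)$. By Lemma~\ref{lem:binom}, each $f_{(a,b)}$ is either identically zero or $(1+\delta_{ab})\binom{x-2}{d-2}$, which is a polynomial of degree $d-2$ in one variable on all of $x\ge2$. So it suffices to treat $N_{(a,b)}$.

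\emph{Step 1: one-variable polynomiality via frozen runs.} Fix $n$ and view $m\mapsto N_{(a,b)}(m,n)$. I would use the frozen-run contraction (Lemma~\ref{lem:contract}). Take a height function with $a$ minima and $b$ maxima, and call a row \emph{essential} if it contains an apex or a maximum, or is adjacent to one. The remaining rows form maximal runs. On such a run the envelope changes from one row to the next by the same pattern, either all $+1$ or all $-1$ in the relevant Voronoi regions. Deleting one row from a run of length at least two, or duplicating one, therefore gives a bijection between configurations on $G_{m,n}$ and on $G_{m\pm1,n}$ that preserves $(a,b)$. Here Theorem~\ref{thm:envelope} and Lemmas~\ref{lem:activemin}--\ref{lem:maxima} certify that activity, parity and the maxima pattern are unchanged. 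A height function is then determined by a bounded \emph{contracted type}, which is the configuration with every run shortened to a minimal length, together with a composition of the leftover $m$-length among its runs. For a type with $k$ runs, the number of such compositions is a binomial coefficient in $m$ of degree $k$. Once $m$ is large enough that every type is realisable, which is where $m\ge2d-1$ enters, $N_{(a,b)}(\cdot,n)$ is a finite sum of such binomials and hence a polynomial in $m$.

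\emph{Step 2: the degree bound $d-2$ per variable.} This is the step I expect to be the main obstacle. The crude run count is about $2d$, not $d-2$. I would first try to show that the number of \emph{free} runs in any type is at most $d-2$. The idea is to charge the row-separations between consecutive essential rows to the extrema, as in the proof of Lemma~\ref{lem:binom}. Projected onto the row axis, the extrema appear in an order, and $d$ extrema give at most $d-1$ gaps. One gap is lost because the two boundary rows, or the normalisation $h(1,1)=0$ together with the parity constraint, pin one length. A second gap is lost because the total length is fixed at $m$. If the charging fails, say where two extrema share a row, or where a maximum on a ridge forces a run length to equal a difference of offsets, the fallback is to show that such coincidences only lower the number of free parameters. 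The bound should then still hold, with equality realised by the single-side families of Lemma~\ref{lem:splitedge}. The same argument with rows and columns exchanged gives degree at most $d-2$ in $n$ for each fixed $m\ge2d-1$.

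\emph{Step 3: interpolation.} Apply Lemma~\ref{lem:interp}. A function on $\{m,n\ge2d-1\}$ that is a polynomial of degree at most $D=d-2$ in $m$ for each fixed $n$, and in $n$ for each fixed $m$, is a single bivariate polynomial of degree at most $D$ in each variable. To see this, use Lagrange interpolation in $m$ at the $D+1$ nodes $m=2d-1,\dots,2d-1+D$. The interpolant $\sum_k L_k(m)\,N_{(a,b)}(m_k,n)$ has coefficients that are polynomials in $n$ of degree at most $D$, and it agrees with $N_{(a,b)}$ at every fixed $n$ by the uniqueness of interpolation. This interpolant is the required polynomial, and subtracting the single-side terms gives the claim for $R_{(a,b)}$.
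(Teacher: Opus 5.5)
Your architecture matches the paper's proof: per-axis polynomiality from the frozen-run contraction, transposition for the other axis, Lemma~\ref{lem:interp} to glue the two, and subtraction of $2f_{(a,b)}(m)+2f_{(a,b)}(n)$ to pass to $R_{(a,b)}$. The gap is Step~2, the bound $d-2$ in each variable. You leave it as a charging heuristic with an unproved fallback, and the accounting behind it does not hold together.

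First, $k$ runs of positive length with fixed total $m-e$ admit $\binom{m-e-1}{k-1}$ compositions, which has degree $k-1$, not $k$. Second, the normalisation $h(1,1)=0$ and the parity condition fix an additive constant, not any run length, so they cost no gap. Removing a gap for them as well as for the fixed total would leave degree $d-3$, below what the single-side families of Lemma~\ref{lem:splitedge} attain.

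The missing ingredient is Lemma~\ref{lem:boundary}: the first and last lines always carry an extremum. Lemma~\ref{lem:frozen} makes every extremum-free interior line frozen. Together they put every maximal frozen run strictly between two consecutive extremum-bearing lines. There are at most $d$ such lines, so at most $d-1$ runs, and the composition count has degree at most $d-2$. The crude run count is therefore already $d-1$, not $2d$, and no charging over shared rows or ridges is needed. This is exactly the content of Lemma~\ref{lem:contract}, which the paper's proof simply cites.

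Your buffer of essential rows (extremum rows together with their neighbours) also breaks the stated region. A contracted type can then carry up to $3d-2$ essential rows. A type's binomial agrees with its count only once $m$ exceeds that number, so your construction gives polynomiality only on $m\ge 3d-1$. The claim that $m\ge 2d-1$ suffices does not follow.

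The buffer is unnecessary, since Lemma~\ref{lem:frozen} makes an extremum-free line frozen even when it borders an occupied one. Contracting without it leaves at most $d$ occupied lines and $d-1$ contracted runs, hence at most $2d-1$ lines, which is where the paper's threshold comes from.

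Lemma~\ref{lem:frozen} is also the real reason that deleting or duplicating a line preserves $(a,b)$, rather than the envelope lemmas you invoke. The step across a frozen line is one uniform $\pm1$ on the whole line, not a pattern varying by Voronoi region. A varying pattern could not be duplicated without breaking the unit differences along the line.
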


\begin{proof}
The frozen-run contraction of Section~\ref{sec:period} (Lemma~\ref{lem:contract})
collapses each height function to a \emph{type} on at most $2d-1$ columns and
preserves every extremum with its sign. A type therefore determines the split of every
height function contracting to it, so the types of a fixed split can be counted on
their own. For large $n$, a type with $r\le d-1$ runs contributes a polynomial in $n$ of
degree $r-1\le d-2$ (Lemma~\ref{lem:contract}). The threshold beyond
which this holds is the type's column count, at most $2d-1$, uniform in $m$ and
over the finitely many types. Summing over the types of the split leaves a single
polynomial in $n$ of degree at most $d-2$. Transposing the grid exchanges the axes
while preserving the number of minima and the number of maxima, so the same bound
holds along columns. Lemma~\ref{lem:interp} assembles the rows and columns into
one bivariate polynomial of degree at most $d-2$ in each variable. The
single-side term is a polynomial by Lemma~\ref{lem:binom}, so subtracting it
leaves $R_{(a,b)}$. The region $\{m,n\ge2d-1\}$ is smaller than the sharp high
region $\{m,n\ge\max(d-1,2)\}$ of Theorem~\ref{thm:poly}. This weaker per-split
statement is all that Proposition~\ref{prop:converse} below requires.
\end{proof}

From configurations with all apexes collinear on one boundary line, the
single-side contribution reaches the full degree $d-2$. The remainder counts every other
configuration, and whether it too reaches $d-2$ is the question.

\begin{conjecture}[Degree bound]\label{conj:G2}
For $d\ge5$ and every split $(a,b)$ with $a+b=d$ and $a\le b$, the remainder $R_{(a,b)}$
has total degree at most $d-3$.
\end{conjecture}

\begin{proposition}[Top-degree part]\label{prop:leading}
For $d\ge5$, under Conjecture~\ref{conj:G2}, the total-degree-$(d-2)$ part of
$E_d(m,n)$ is $C(d)\,(m^{d-2}+n^{d-2})$.
\end{proposition}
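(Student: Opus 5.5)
The plan is to assemble the top-degree part of $E_d$ split by split, using the decomposition of Lemma~\ref{lem:splitedge} together with the weighted sum~\eqref{eq:Ed}. First I fix the polynomial setting, so that "total-degree-$(d-2)$ part" has a meaning. By Lemma~\ref{lem:splitpoly}, each $N_{(a,b)}$ with $a+b=d$, $a\le b$, agrees on $\{m,n\ge 2d-1\}$ with a bivariate polynomial, and so does each remainder $R_{(a,b)}$. By~\eqref{eq:Ed}, $E_d$ then agrees on that region with the polynomial
\[
  \sum_{\substack{1\le a\le b\\ a+b=d}}(2-\delta_{ab})\big(2f_{(a,b)}(n)+2f_{(a,b)}(m)+R_{(a,b)}(m,n)\big).
\]
A polynomial is determined by its values on a translated quadrant, so its homogeneous components are well defined. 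The top-degree part of $E_d$ is therefore the sum of the top-degree parts of these finitely many summands.

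Next I separate the two kinds of summand. For the single-side terms, Proposition~\ref{prop:singleside} already computes the contribution to degree $d-2$. Only the balanced split survives, and it contributes $C(d)\,(m^{d-2}+n^{d-2})$ with $C(d)=4/(d-2)!$. Each $f_{(a,b)}$ has degree at most $d-2$ by Lemma~\ref{lem:binom}, so no single-side term rises above degree $d-2$. For the remainders, Conjecture~\ref{conj:G2}, valid for $d\ge5$, gives $\deg R_{(a,b)}\le d-3$ for every split. Weighting by the constants $2-\delta_{ab}$ and summing finitely many terms keeps the degree at most $d-3$. So the remainders contribute nothing in degree $d-2$ or above.

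Combining the two, $E_d$ has total degree at most $d-2$, and its degree-$(d-2)$ homogeneous part is exactly $C(d)\,(m^{d-2}+n^{d-2})$. This part is nonzero, so the total degree is exactly $d-2$. The hypothesis $d\ge5$ enters only through the conjecture, and $d\ge3$ is needed for Lemma~\ref{lem:splitedge}.

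The main obstacle is making "total-degree part" rigorous rather than formal. The counts are polynomial only on a region, and the remainder's degree in the conjecture must refer to the same polynomial extension used for $E_d$. I would handle this by working throughout with the unique polynomial extensions from Lemma~\ref{lem:splitpoly} on $\{m,n\ge 2d-1\}$. I would then note that, on the overlap, this polynomial coincides with the $p_d$ of Theorem~\ref{thm:poly}, so the conclusion transfers to the high region.
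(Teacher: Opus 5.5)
Your proposal is correct and follows the paper's proof: Proposition~\ref{prop:singleside} supplies $C(d)\,(m^{d-2}+n^{d-2})$ from the single-side terms, and Conjecture~\ref{conj:G2} keeps every remainder below degree $d-2$, so nothing else reaches that degree. The extra care you take is also consistent with the paper. You fix the polynomial setting through Lemma~\ref{lem:splitpoly} and identify the resulting polynomial with the $p_d$ of Theorem~\ref{thm:poly}, and the paper justifies speaking of each remainder's degree the same way in the paragraph before Lemma~\ref{lem:splitpoly}.
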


\begin{proof}
By Proposition~\ref{prop:singleside}, the single-side terms supply
$C(d)\,(m^{d-2}+n^{d-2})$, and every other top-degree term comes from the
remainders $R_{(a,b)}$. Under Conjecture~\ref{conj:G2}, each $R_{(a,b)}$ has total
degree at most $d-3$, so no remainder contributes at degree $d-2$. The
top-degree part of $E_d$ is therefore the single-side contribution alone,
carrying no mixed monomial.
\end{proof}

The implication reverses. The top-degree part of $E_d$ is an object the
interpolation of Section~\ref{sec:data} can determine without assuming anything
about the individual remainders, and knowing it is enough to bound all of them at once.

\begin{proposition}[Degree bound from the top-degree part]\label{prop:converse}
Let $d\ge5$, and suppose that on the high region $E_d$ carries no monomial of
total degree above $d-2$ and that its total-degree-$(d-2)$ part is
$C(d)\,(m^{d-2}+n^{d-2})$. Then every remainder $R_{(a,b)}$ with $a+b=d$ and
$a\le b$ has total degree at most $d-3$, so Conjecture~\ref{conj:G2} holds at
that $d$.
\end{proposition}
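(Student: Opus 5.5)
The plan is a positivity argument: cancellation in the top-degree part of $E_d$ is impossible because every remainder counts objects and so has a nonnegative leading behaviour. By \eqref{eq:Ed} and Lemma~\ref{lem:splitedge}, on the common region $\{m,n\ge2d-1\}$ where Lemma~\ref{lem:splitpoly} applies, we have
\[
  E_d=\text{(single-side terms)}+\sum_{\substack{a\le b\\ a+b=d}}(2-\delta_{ab})\,R_{(a,b)} .
\]
Each $R_{(a,b)}$ is a bivariate polynomial there. By Proposition~\ref{prop:singleside}, the single-side terms have total degree $d-2$, with top part exactly $C(d)(m^{d-2}+n^{d-2})$. The hypothesis on $E_d$ therefore says that the weighted sum $R:=\sum(2-\delta_{ab})R_{(a,b)}$ is a polynomial of total degree at most $d-3$. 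It carries no monomial above $d-2$, and its degree-$(d-2)$ part is the difference of two equal forms.

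The remaining task is to pass from the sum to each summand. Every $R_{(a,b)}$ counts configurations, so $R_{(a,b)}(m,n)\ge0$ on the region, and the weights $2-\delta_{ab}$ are positive. Hence $0\le R_{(a,b)}\le R$ pointwise for each split. Suppose some $R_{(a,b)}$ had total degree $D\ge d-2$, with top form $F\ne0$ of degree $D$. I would restrict to rays $(m,n)=(\lambda x,\lambda y)$ with $x,y>0$ rational and $\lambda\to\infty$ along integers with $\lambda x,\lambda y\in\Z$. Along such a ray, $R_{(a,b)}=F(x,y)\lambda^{D}+O(\lambda^{D-1})$, while $R=O(\lambda^{d-3})$.

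\begin{enumerate}
\item If $F(x,y)<0$ somewhere in the open positive quadrant, this contradicts $R_{(a,b)}\ge0$.
\item If $F(x,y)>0$ somewhere, this contradicts $R_{(a,b)}\le R$.
\end{enumerate}

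Since $F$ is a nonzero polynomial, it cannot vanish on the whole open positive quadrant, and rational points are dense there. So one of the two cases must occur, which is a contradiction. Thus each $R_{(a,b)}$ has total degree at most $d-3$. This is Conjecture~\ref{conj:G2} at that $d$, since total degree is a property of the polynomial and does not depend on the region used to identify it.

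The main obstacle is bookkeeping about regions and polynomial identity. The hypothesis on $E_d$ lives on the high region $\{m,n\ge\max(d-1,2)\}$, while Lemma~\ref{lem:splitpoly} gives polynomiality only on $\{m,n\ge2d-1\}$. I would work entirely on the smaller quadrant $\{m,n\ge2d-1\}$. Two polynomials agreeing on an infinite integer quadrant are identical, so the hypothesis transfers, and the ray argument needs only large $\lambda$. A second point to check is that the leading asymptotics along a ray are governed by the top homogeneous form. This is immediate because lower-degree monomials contribute $O(\lambda^{D-1})$ uniformly for fixed $(x,y)$.
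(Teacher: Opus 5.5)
Your proposal is correct and follows essentially the paper's proof. You subtract the single-side terms, use the hypothesis to conclude that the weighted remainder sum $\sum(2-\delta_{ab})R_{(a,b)}$ has total degree at most $d-3$, and then rule out cancellation using the nonnegativity of each count $R_{(a,b)}$ along rational rays. The only difference is cosmetic: you use the pointwise sandwich $0\le R_{(a,b)}\le\sum(2-\delta_{a'b'})R_{(a',b')}$ to handle each remainder on its own, whereas the paper takes the largest degree $D$ among the remainders and shows that their weighted degree-$D$ components, each nonnegative on the positive cone and not all zero, cannot cancel.
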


\begin{proof}
Subtracting the single-side terms from~\eqref{eq:Ed} leaves
\[
  E_d-(\text{single-side terms})
  =\sum_{\substack{1\le a\le b\\ a+b=d}}\bigl(2-\delta_{ab}\bigr)R_{(a,b)}.
\]
This holds at every integer point with $m,n\ge2$. Once $m$ and $n$ are large, each side
agrees with a polynomial, the left by Theorem~\ref{thm:poly} and Lemma~\ref{lem:binom}
and the right by Lemma~\ref{lem:splitpoly}, so it is an identity of polynomials.
By Proposition~\ref{prop:singleside} the single-side contribution has
total-degree-$(d-2)$ part $C(d)\,(m^{d-2}+n^{d-2})$, and it carries nothing above
degree $d-2$, since each single-side term is a polynomial of degree $d-2$ in one
variable. By hypothesis $E_d$ has that same total-degree-$(d-2)$ part and nothing
above it either, so the two cancel and the displayed difference has total degree
at most $d-3$.

\medskip\noindent
Each remainder is a polynomial for large $m,n$ by Lemma~\ref{lem:splitpoly}, so
its homogeneous components are defined; write $R^{[k]}_{(a,b)}$ for the degree-$k$ one.
Suppose some remainder had total degree at least $d-2$, and let $D\ge d-2$ be the
largest total degree attained among the $R_{(a,b)}$. Each remainder counts
configurations (Lemma~\ref{lem:splitedge}), so $R_{(a,b)}\ge0$ at every integer
point of the high region. Fix positive integers $m_0,n_0$;
evaluating along the multiples $t\,(m_0,n_0)$ and dividing by $t^{D}$,
\[
  R^{[D]}_{(a,b)}(m_0,n_0)
  =\lim_{t\to\infty}\frac{R_{(a,b)}(t m_0,t n_0)}{t^{D}}\ \ge\ 0 ,
\]
so each $R^{[D]}_{(a,b)}$ is nonnegative on a dense set of directions of the
positive cone $m,n>0$, hence on the whole cone by continuity. The weights $2-\delta_{ab}$
lie in $\{1,2\}$, so $\sum(2-\delta_{ab})R^{[D]}_{(a,b)}$ is nonnegative on that
cone as well. It is not identically zero. Some $R^{[D]}_{(a,b)}$ is a nonzero
polynomial, so it is nonzero at some point of the open cone. Since it is nonnegative
there, it is strictly positive at that point, where every other summand is at least
zero. That sum is the degree-$D$ component of the displayed difference, which
therefore has total degree $D\ge d-2$, contradicting the previous paragraph.
\end{proof}

This isolates the top-degree law in the single degree bound of
Conjecture~\ref{conj:G2}. Proposition~\ref{prop:singleside} supplies
$C(d)\,(m^{d-2}+n^{d-2})$ unconditionally, leaving only the remainders in question.
The bound on them holds for the separable configurations, those whose envelope splits
into a row and a column part, by Section~\ref{sec:separable} (Theorem~\ref{thm:sepcase}).
Direct enumeration confirms it through $d=7$, and
Proposition~\ref{prop:converse} settles $d=8$ from the independently determined
$p_8$ (Corollary~\ref{cor:G2d8}). The converse needs $p_d$ pinned without
assuming the bound, which the interpolation of Section~\ref{sec:data} achieves
only through $d=8$. It reaches $p_9$ and $p_{10}$ under Conjecture~\ref{conj:G2}
itself, so the non-separable case stays open for $d\ge9$.

% ===========================================================================
\section{Polynomiality on the high region}\label{sec:period}
% REOPENED 2026-08-04: residual R~_d now unconditional for d<=8, was d<=7.
% Section 9 (sec:period) LOCKED 2026-07-11 (Adversary lock-gate PASS: 0 CRITICAL,
% 0 WRONG, 0 blocking; rubrics math-proofs, fit, english-usage, paper-writing,
% source-grounding, latex-conventions. Full internal correctness + completeness +
% cross-section linkage verified; quantitative claims brute-forced: E_2=4,
% p_2..p_7 match E_d, per-axis leading coeff C(d)=4/(d-2)!, tab:corr incl
% B_5^{(2)}=40, Baxter -4*Bax(d-3), d=8 remark. Consumers (abstract, intro,
% sec:data) match thm:poly; lem:uniform d>=3 never invoked at d=2; 20 refs
% resolve; H_d not orphaned. thm:poly reconciled to d>=2/max(d-1,2) (commit
% 8e0f5630); para-3 colon (61d8b4c7); M1 (E_2=4 direct enumeration) + M2
% (m^{d-2}-coeff precision) folded in. Word-level prose walk complete.)
% ===========================================================================

Sections~\ref{sec:onedim}--\ref{sec:leading} draw the top-degree
part of $E_d$ from the lattice-point count. Theorem~\ref{thm:quasipoly} leaves
open whether $E_d$ is a single polynomial of period $1$ rather than a genuine
quasi-polynomial, which the lattice-point count does not settle. The
column-to-column transfer matrix does. It establishes that $E_d$ is eventually polynomial in each
variable, with the onset uniform in $m$. $E_d$ therefore agrees with a single polynomial on the rectangle $H_d=\{m,n\ge d-1\}$
for $d\ge3$. The question reduces to a per-axis statement, resolved by classifying
the columns that carry no extremum.

\subsection{Per-axis polynomiality}

With $m$ fixed, a height function is a left-to-right sequence of columns, each assigning integer
heights to the $m$ cells of the path $P_m$. Heights differ by $\pm1$ across each edge, so mod $3$
each column becomes a proper $3$-colouring, an element $\sigma\in\{0,1,2\}^m$ with adjacent entries
distinct. A transition $\sigma\to\sigma'$ is admissible iff $\sigma'_i\ne\sigma_i$
for all $i$. A
cell is a strict local extremum iff its neighbours share one colour, a
test spanning three consecutive columns. The bulk transfer
matrix therefore acts on admissible pairs of consecutive columns. It steps from a
pair $(\text{prev},\text{cur})$ to $(\text{cur},\text{next})$, emitting
$x^{\#\text{extrema of cur}}$ for the middle column of the triple
$(\text{prev},\text{cur},\text{next})$. Let $T_m(x)$ be this finite matrix over $\Z[x]$, indexed by the admissible pairs.

\begin{lemma}[Rational generating function]\label{lem:ratGF}
\[
  \sum_{n\ge2} c_{m,n}(x)\,z^n=z^2\,\mathbf u^{\!\top}\big(I-zT_m(x)\big)^{-1}\mathbf v,
\]
where $c_{m,n}(x)=\sum_h x^{\#\text{extrema}(h)}$ sums over the height functions $h$
on the $m\times n$ grid and $\mathbf u,\mathbf v$ are boundary vectors polynomial in
$x$ and independent of $z$. Hence $\sum_{n\ge2} E_d(m,n)\,z^n$ is the coefficient of $x^d$ in the
right-hand side, and so rational in~$z$.
\end{lemma}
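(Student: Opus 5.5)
The plan is to set up a standard transfer-matrix identity and then read off rationality from Cramer's rule. Fix $m\ge2$. A height function on $\Gmn$ is determined up to the normalisation $h(1,1)=0$ by its sequence of columns. Reduced mod $3$, each column is a proper $3$-colouring $\sigma^{(j)}\in\{0,1,2\}^m$.

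The first step is a bijection between normalised height functions and sequences $\sigma^{(1)},\dots,\sigma^{(n)}$ of proper column colourings with $\sigma^{(j+1)}_i\ne\sigma^{(j)}_i$ for all $i$, subject to one normalisation on the colour at $(1,1)$. This is the Ginepro--Hull/Cereceda lift recalled in Section~\ref{sec:background}: a proper $3$-colouring of the simply connected grid lifts to a height function, uniquely once $h(1,1)$ is fixed. I will spell out which normalisation makes the correspondence exactly one-to-one. One option is to fix $\sigma^{(1)}_1=0$. Alternatively one can sum freely and divide by $3$, which is harmless because $\mathbf u$ may carry a constant factor.

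The second step is locality of the extremum test. The proof of Lemma~\ref{lem:degextrema} shows that a cell is an extremum iff all its neighbours have a common height. For neighbours of a cell with height $h(v)$, whose heights lie in $\{h(v)\pm1\}$, equality of heights is the same as equality of colours mod $3$. So the number of extrema in column $j$ is a function of $(\sigma^{(j-1)},\sigma^{(j)},\sigma^{(j+1)})$ for $2\le j\le n-1$. Column $1$ depends only on $(\sigma^{(1)},\sigma^{(2)})$, and column $n$ only on $(\sigma^{(n-1)},\sigma^{(n)})$.

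The third step assembles the weights into a matrix product. Index by admissible pairs. Define $T_m(x)$ with entry $x^{e(\text{prev},\text{cur},\text{next})}$ from $(\text{prev},\text{cur})$ to $(\text{cur},\text{next})$, and $0$ otherwise. Let $\mathbf u$ be supported on initial pairs $(\sigma^{(1)},\sigma^{(2)})$ satisfying the normalisation, with entry $x^{e_{\text{left}}(\sigma^{(1)},\sigma^{(2)})}$. Let $\mathbf v$ have entry $x^{e_{\text{right}}(\sigma^{(n-1)},\sigma^{(n)})}$ at each pair. Then for $n\ge2$,
\[
  c_{m,n}(x)=\mathbf u^{\top}T_m(x)^{\,n-2}\mathbf v .
\]
For $n=2$ both columns are boundary columns, and the two boundary factors together correctly count every cell's extremum status. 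Summing against $z^n$ gives $z^2\mathbf u^{\top}\sum_k (zT_m)^k\mathbf v=z^2\mathbf u^\top(I-zT_m)^{-1}\mathbf v$ as formal power series in $z$ over $\Z[x]$. Since $\det(I-zT_m)$ has constant term $1$, Cramer's rule shows that the right-hand side is a ratio of polynomials in $z$ with coefficients in $\Z[x]$, with denominator invertible in $\Z[x][[z]]$.

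Finally, taking the coefficient of $x^d$ commutes with the $z$-expansion. Because $\det(I-zT_m(x))$ has constant term $1$, the series lies in $\Z[x][[z]]$ and equals $P(x,z)/D(x,z)$. Expanding $1/D$ as a power series in $x$ is awkward in general, so instead I will argue via the $x$-degree. Writing $T_m(x)=\sum_k T_k x^k$ with finitely many terms, the coefficient of $x^d$ in $\mathbf u^\top T^{n-2}\mathbf v$ is a finite sum of products. Equivalently, I will pass to the block matrix acting on $(\Z[x]/x^{d+1})$-modules. Here $T_m(x)$ mod $x^{d+1}$ becomes a finite block-triangular matrix over $\Z$ of size (pairs)$\times(d+1)$, and $E_d(m,n)$ is an entry-combination of its $(n-2)$nd power. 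Rationality in $z$ then follows by the same Cramer argument over $\Z$. This truncation step is the only mildly delicate point; the main obstacle overall is stating the normalisation and the boundary vectors precisely so that the bijection is exact.
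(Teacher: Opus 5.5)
Your proposal is correct and follows essentially the paper's route. Like the paper, you treat columns as proper $3$-colourings, build a transfer matrix on admissible pairs whose boundary vectors fix the corner, obtain $c_{m,n}(x)=\mathbf u^{\top}T_m(x)^{n-2}\mathbf v$, and then sum the geometric series and apply Cramer's rule. The one variation is the final step: you extract $[x^d]$ by truncating to $\Z[x]/(x^{d+1})$ to get a block-triangular integer matrix, where the paper expands $\operatorname{adj}(I-zT_m(x))/\det(I-zT_m(x))$ in $x$ using that the denominator is nonzero at $x=0$. Both are valid, and yours also shows directly that the denominator divides $\det(I-zT_m(0))^{d+1}$.
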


\begin{proof}
A height function on the $m\times n$ grid is a sequence of $n$ columns, each a
proper $3$-colouring of $P_m$ with consecutive columns admissible. Whether a cell is
a strict local extremum depends only on its column and the two adjacent columns, so
$\#\text{extrema}(h)$ splits into per-column contributions, each from a consecutive
triple. The matrix $T_m(x)$ weights each transition by its middle column's
contribution, so $x^{\#\text{extrema}(h)}$ factors as a product along the pair
sequence. Summing over all height functions gives
\[
  c_{m,n}(x)=\mathbf u^{\!\top}T_m(x)^{\,n-2}\mathbf v,
\]
where $\mathbf u,\mathbf v$ supply the two end columns and $\mathbf u$ fixes the
corner $h(1,1)=0$, so the count runs over height functions, not all colourings.
Summing the geometric series in $z$ gives the stated identity, the factor $z^2$
offsetting $n$ against the power $n-2$. Finally, the resolvent
$(I-zT_m(x))^{-1}=\operatorname{adj}(I-zT_m(x))/\det(I-zT_m(x))$ has entries that are
ratios of polynomials in $\Z[x,z]$ whose denominator does not vanish at $x=0$. Expanding in $x$
gives coefficients rational in $z$, so the coefficient of $x^d$ in the right-hand
side is rational in $z$.
\end{proof}

\begin{lemma}[Pole location]\label{lem:poles}
Let $T_0:=T_m(0)$ be the extremum-free transfer matrix. The poles of
$\sum_{n\ge2} E_d(m,n)\,z^n$ lie among the reciprocals of the nonzero eigenvalues
of $T_0$.
\end{lemma}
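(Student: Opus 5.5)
The plan is to expand the resolvent of Lemma~\ref{lem:ratGF} as a power series in $x$ around $x=0$ and read off the $x^d$ coefficient. Write $T_m(x)=T_0+xT_1(x)$, where $T_1$ is a matrix polynomial in $x$; this is possible because $T_m(x)$ has entries in $\Z[x]$. Since the bulk weight $x^{\#\text{extrema}}$ is $1$ exactly on extremum-free middle columns, $T_0$ is the transfer matrix restricted to triples whose middle column carries no extremum. Put $M=I-zT_0$. Then
\[
  I-zT_m(x)=M\bigl(I-xzM^{-1}T_1(x)\bigr),
\]
and as formal power series in $x$,
\[
  \bigl(I-zT_m(x)\bigr)^{-1}=\sum_{k\ge0}x^k z^k\bigl(M^{-1}T_1(x)\bigr)^k M^{-1}.
\]
Collecting the coefficient of $x^d$ gives a finite sum: only $k\le d$ contribute, and each term involves the coefficients of $T_1$ up to order $d$. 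Every term is a product of matrices polynomial in $z$ alternating with factors $M^{-1}$. The boundary vectors $\mathbf u,\mathbf v$ are polynomial in $x$ and independent of $z$, so their $x$-coefficients are constant in $z$ and introduce no poles.

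With this expansion, the coefficient of $x^d$ in $z^2\mathbf u^{\!\top}(I-zT_m(x))^{-1}\mathbf v$ is a $\Q$-linear combination, with polynomial-in-$z$ coefficients, of entries of products of $M^{-1}$ and constant matrices. By the adjugate formula, $M^{-1}=\operatorname{adj}(I-zT_0)/\det(I-zT_0)$. Every such entry is therefore a polynomial in $z$ divided by a power of $\det(I-zT_0)$. Writing
\[
  \det(I-zT_0)=\prod_\lambda(1-\lambda z)
\]
over the eigenvalues $\lambda$ of $T_0$ with multiplicity, the factors with $\lambda=0$ are identically $1$. The remaining factors vanish only at $z=1/\lambda$ with $\lambda\ne0$. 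Hence any pole of $\sum_{n\ge2}E_d(m,n)z^n$ lies among these reciprocals, and cancellation can only remove poles, never create new ones.

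The step that needs care is justifying that extracting the $x^d$ coefficient commutes with the matrix inversion. I would do this formally, in the ring $\Q(z)[[x]]$, or equivalently in $\Q[z][1/\det M][[x]]$, where $M$ is invertible. Two power series in $x$ that are both inverses of $I-zT_m(x)$ must coincide. The expression in Lemma~\ref{lem:ratGF} is, by its own proof, expanded in $x$ in exactly this way, so the identification is legitimate.

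It is also worth confirming that the $x$-expansion does not introduce spurious denominators, such as the full $\det(I-zT_m(x))$ evaluated at a general $x$. This is clear from the Neumann form above, since only $M^{-1}$ is ever inverted.
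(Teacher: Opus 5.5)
Your proposal is correct and follows the paper's proof essentially step for step. You split $T_m(x)=T_0+xB(x)$, expand the resolvent as a Neumann series about $R_0=(I-zT_0)^{-1}$, keep only the terms with $k\le d$ that reach $[x^d]$, and conclude that no denominator beyond $\det(I-zT_0)=\prod_i(1-z\lambda_i)$ appears. Your additional remark, that the expansion lives in $\Q[z][1/\det(I-zT_0)][[x]]$, makes explicit a formal step the paper leaves implicit; that ring embeds in $\Q[[x,z]]$ because $\det(I-zT_0)$ has constant term $1$, which is what ties the expansion to the $z$-series $\sum_n E_d(m,n)\,z^n$.
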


\begin{proof}
By Lemma~\ref{lem:ratGF}, $\sum_{n\ge2} E_d(m,n)\,z^n$ is $z^2$ times the $x^d$-coefficient of
$\mathbf u^{\!\top}(I-zT_m(x))^{-1}\mathbf v$, with $\mathbf u,\mathbf v$ polynomial in $x$. Its poles
are therefore among those of this coefficient, read off the resolvent at $x=0$,
which the extrema only perturb. At $x=0$, the resolvent is $R_0:=(I-zT_0)^{-1}$. Its poles are the
roots of $\det(I-zT_0)=\prod_i(1-z\lambda_i)$, the reciprocals $1/\lambda_i$ of the nonzero
eigenvalues of $T_0$. Writing $T_m(x)=T_0+xB(x)$ separates off the extrema, and the full resolvent
is the Neumann series
\[
  (I-zT_m(x))^{-1}=\sum_{k\ge0}\big(R_0\,zxB(x)\big)^k R_0.
\]
Each factor $zxB(x)$ carries a positive power of $x$, so the $k$-th term has $x$-degree at least
$k$. Only those with $k\le d$ contribute to $[x^d]$. The $x^d$-coefficient is thus a finite sum of
products of $R_0$, with no denominator beyond $\det(I-zT_0)$. The extrema therefore add no poles,
and every pole is one of the $1/\lambda_i$.
\end{proof}

\begin{lemma}[Frozen classification]\label{lem:frozen}
Let $a,b,c$ be three consecutive columns of a height function, so each is a proper
$3$-colouring of $P_m$, with $a_i\ne b_i$ and $c_i\ne b_i$ for all $i$. The middle
column $b$ carries no strict local extremum iff there is a slope
$k\in\{1,2\}$ with $a_i=b_i-k$ and $c_i=b_i+k$ in $\Z/3$ for all $i$. Such
a triple is \emph{frozen}.
\end{lemma}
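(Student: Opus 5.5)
We work with genuine heights rather than colours, and translate back at the end. Lift the three columns to integer heights $h$, and write $\alpha_i=h(a_i)-h(b_i)$, $\gamma_i=h(c_i)-h(b_i)$ and $\beta_i=h(b_{i+1})-h(b_i)$, all in $\{\pm1\}$. Two neighbours of a cell have equal height iff they have equal colour mod $3$, since both heights lie in $h(b_i)\pm1$ and $+1\not\equiv-1\pmod 3$. So, as in the proof of Lemma~\ref{lem:degextrema}, cell $b_i$ is a strict extremum iff $\alpha_i$, $\gamma_i$ and the available vertical differences $\beta_i$ and $-\beta_{i-1}$ all coincide. In these terms the frozen condition says: there is a sign $\varepsilon$ with $\alpha_i=-\varepsilon$ and $\gamma_i=\varepsilon$ for every $i$. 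Taking $k\equiv\varepsilon\pmod 3$, $\varepsilon=+1$ corresponds to slope $k=1$ and $\varepsilon=-1$ to slope $k=2$. The plan is to prove this reformulated equivalence.

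\emph{The easy direction.} If $\alpha_i=-\varepsilon$ and $\gamma_i=\varepsilon$, then cell $b_i$ has two horizontal neighbours of different heights. Hence it is not an extremum, and this holds for every $i$.

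\emph{The converse, first step: $\alpha_i=-\gamma_i$ for every $i$.} Suppose some row has $\alpha_i=\gamma_i=\varepsilon$. Since $b_i$ is not an extremum, it has a vertical neighbour, say $b_{i+1}$ (the case of $b_{i-1}$ is symmetric), with $\beta_i=-\varepsilon$. The unit square on rows $i,i+1$ of columns $a,b$ forces $h(a_{i+1})$ to lie within $1$ of both $h(a_i)=h(b_i)+\varepsilon$ and $h(b_{i+1})=h(b_i)-\varepsilon$. The only such value is $h(a_{i+1})=h(b_i)$, so $\alpha_{i+1}=\varepsilon$. The same argument in columns $b,c$ gives $\gamma_{i+1}=\varepsilon$. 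Thus row $i+1$ again has $\alpha=\gamma=\varepsilon$. Its neighbour $b_i$ lies at height $h(b_{i+1})+\varepsilon$, so being non-extremal forces $\beta_{i+1}=-\varepsilon$, and the propagation continues. Induction pushes this down to row $m$, where the only vertical neighbour is $b_{m-1}$, whose height is $h(b_m)+\varepsilon$. So $b_m$ is an extremum, a contradiction. The degenerate case $m=1$ is immediate, since there are no vertical neighbours at all.

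\emph{The converse, second step: constancy of $\alpha$.} With $\gamma_i=-\alpha_i$ everywhere, compare consecutive rows. Adjacency within column $a$ requires
\[
  h(a_{i+1})-h(a_i)=\beta_i+(\alpha_{i+1}-\alpha_i)\in\{\pm1\}.
\]
Adjacency within column $c$ requires
\[
  h(c_{i+1})-h(c_i)=\beta_i-(\alpha_{i+1}-\alpha_i)\in\{\pm1\}.
\]
If $\alpha_{i+1}\ne\alpha_i$, the difference $\alpha_{i+1}-\alpha_i$ is $\pm2$, and exactly one of $\beta_i\pm2$ has absolute value $1$. The two conditions then cannot both hold. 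So $\alpha$ is constant down the column, say $\alpha_i=-\varepsilon$, and with it $\gamma_i=\varepsilon$. Translating back mod $3$ gives the stated slope $k$.

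I expect the first step to be the main obstacle. Non-extremality is a local and disjunctive condition, so an argument by finite propagation to the boundary row is needed to exclude $\alpha_i=\gamma_i$. The bookkeeping there must check that, after one step, the vertical neighbour already visited cannot supply the required disagreement.
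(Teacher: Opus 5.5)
Your proof is correct and follows the paper's argument. The paper also splits rows into aligned ($\alpha_i=\gamma_i$) and rainbow ones, rules out aligned rows by propagating alignment along the forced vertical step until the end row becomes an extremum, and then uses the two column-properness constraints to force a constant offset. The differences are only in presentation: you work with lifted $\pm1$ height differences instead of $\Z/3$ differences in $\{1,2\}$, and you run the propagation as a direct walk from any aligned row instead of the paper's maximal-block bookkeeping, which also lets you handle $m=1$ explicitly.
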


\begin{proof}
Index the rows $0,\dots,m-1$ and work in $\Z/3$. Let $\alpha_i=a_i-b_i$,
$\gamma_i=c_i-b_i$, and $s_i=b_{i+1}-b_i$. These are nonzero, hence in $\{1,2\}$,
because $b$ is proper and $a_i\ne b_i$, $c_i\ne b_i$. Since $a_{i+1}\ne a_i$ and
$c_{i+1}\ne c_i$,
\begin{equation}\label{eq:uwproper}
  \alpha_{i+1}\ne\alpha_i-s_i,\qquad \gamma_{i+1}\ne\gamma_i-s_i.
\end{equation}
Row $i$ is \emph{aligned}, with value $r:=\alpha_i$, when $\alpha_i=\gamma_i$, and
\emph{rainbow} when $\alpha_i\ne\gamma_i$. A rainbow row shows $a_i,b_i,c_i$ in all
three colours, so $\gamma_i=3-\alpha_i$.

\medskip\noindent
Each present neighbour of a cell differs from it by $\pm1$, so the cell is a strict
local extremum exactly when its neighbours share a single colour. For the cell in
row $i$ of $b$, the horizontal neighbours $a_i$ and $c_i$ share the colour
$b_i+\alpha_i$ precisely when the row is aligned. The up-neighbour
$b_{i-1}=b_i-s_{i-1}$, present for $i>0$, carries that colour when
$s_{i-1}=3-\alpha_i$. The down-neighbour $b_{i+1}=b_i+s_i$, present for $i<m-1$,
does so when $s_i=\alpha_i$. An extremum therefore occurs only on an aligned row,
and an aligned row $i$ carries one exactly when
\begin{equation}\label{eq:extremum}
  \begin{cases}
    s_{i-1}=3-\alpha_i \text{ and } s_i=\alpha_i, & 0<i<m-1,\\
    s_0=\alpha_0, & i=0,\\
    s_{m-2}=3-\alpha_{m-1}, & i=m-1.
  \end{cases}
\end{equation}

\medskip\noindent
Alignment propagates along $b$. If row $i$ is aligned with value $r$ and
$s_i=3-r$, then \eqref{eq:uwproper} leaves $\alpha_{i+1}$ and $\gamma_{i+1}$
different from $3-r$. Both therefore equal $r$, so row $i+1$ is aligned with value
$r$. Symmetrically, $s_{i-1}=r$ aligns row $i-1$ with value $r$.

\medskip\noindent
Suppose first that $b$ is extremum-free. If some row were aligned, take a maximal
block $[p,q]$ of consecutive rows aligned with a single value $r$. Maximality pins
the two end steps. If $q<m-1$ then $s_q=r$, since $s_q=3-r$ would align row $q+1$.
If $p>0$ then $s_{p-1}=3-r$, since $s_{p-1}=r$ would align row $p-1$. No aligned row
is an extremum, so by \eqref{eq:extremum} an interior row $j$ with $s_{j-1}=3-r$ has
$s_j=3-r$. The value $3-r$ enters at the lower end: as $s_{p-1}$ when $p>0$, and as
$s_0$ when $p=0$, since the non-extremal row $0$ makes $s_0=3-r$. It propagates up
to $s_{q-1}$. When $q<m-1$ it reaches $s_q=3-r$ as well, through interior row $q$
for $q>0$ and as $s_0$ for $q=0$, contradicting $s_q=r$. When $q=m-1$, the value
$s_{m-2}=3-r$ makes row $m-1$ an extremum. Both cases are impossible, so no row is
aligned, and every row is rainbow with $\gamma_i=3-\alpha_i$. The $\gamma$-clause of
\eqref{eq:uwproper} now reads $\alpha_{i+1}\ne\alpha_i+s_i$, which with
$\alpha_{i+1}\ne\alpha_i-s_i$ forces $\alpha_{i+1}=\alpha_i$. Hence $\alpha$ is the
constant $-k$ for some $k\in\{1,2\}$, and $\gamma=3-\alpha=k$. Then $a=b-k$ and
$c=b+k$, so the middle column is frozen.

\medskip\noindent
Conversely, suppose $a=b-k$ and $c=b+k$ with $k\in\{1,2\}$. Then $\alpha_i=-k$ and
$\gamma_i=k$ at every row, and $-k\ne k$ in $\Z/3$, so no row is aligned. An
extremum occurs only on an aligned row, so $b$ is extremum-free.
\end{proof}

By contrast, a boundary column has only one horizontal neighbour, and so always
carries an extremum.

\begin{lemma}[Boundary extremum]\label{lem:boundary}
In every height function on $\Gmn$ with $m,n\ge2$, the first and last columns each
carry a strict local extremum.
\end{lemma}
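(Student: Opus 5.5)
The plan is to argue directly from the envelope encoding, taking the first column $j=1$; the last column follows by reflecting the grid left to right. A reflection preserves height functions up to the normalisation at $(1,1)$, and a global shift restores that without moving any extremum. Let $h$ be a height function on $\Gmn$ with $m,n\ge2$. The strategy is to find, inside column $1$, a cell that is a strict local minimum or maximum of $h$. Since the column is a path of $m\ge2$ cells, I will use the one-dimensional walk structure of $h$ restricted to it, together with the single horizontal neighbour each cell has in column $2$.

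The first step is to restrict attention to column $1$. Write $g(i)=h(i,1)$, a $\pm1$ walk on $P_m$, and $\delta(i)=h(i,2)-h(i,1)\in\{\pm1\}$. A cell $(i,1)$ is a strict local minimum iff $g$ has a strict local minimum at $i$, with endpoints counted as in Lemma~\ref{lem:binom}, and $\delta(i)=+1$. It is a strict local maximum iff $g$ has a strict local maximum at $i$ and $\delta(i)=-1$. The key constraint on $\delta$ comes from the $4$-cycle formed by $(i,1),(i+1,1),(i+1,2),(i,2)$. Going around it, the four unit differences sum to zero, which gives
\[
\bigl(h(i+1,2)-h(i,2)\bigr)-\bigl(g(i+1)-g(i)\bigr)=\delta(i+1)-\delta(i)\in\{0,\pm2\}.
\]
The cases where $\delta$ flips are then read off as follows. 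If $\delta(i)=+1$ and $\delta(i+1)=-1$, the vertical step in column $2$ differs from that in column $1$ by $-2$. This forces $g(i+1)-g(i)=+1$ and $h(i+1,2)-h(i,2)=-1$. The flip $\delta(i)=-1$, $\delta(i+1)=+1$ similarly forces $g(i+1)-g(i)=-1$.

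The second step is a short case argument along the walk $g$. The walk has at least two extrema, since its endpoints count, and these alternate between minima and maxima. Suppose no extremum of column $1$ is an extremum of $h$. Then $\delta=-1$ at every local minimum of $g$ and $\delta=+1$ at every local maximum of $g$. Take a consecutive minimum $i_0$ and maximum $i_1$ of $g$; the walk is monotone between them. If $i_0<i_1$, then $g$ increases on $[i_0,i_1]$, while $\delta$ changes from $-1$ to $+1$ over that interval. So somewhere along it $\delta$ flips from $-1$ to $+1$ at a step where $g$ increases. This contradicts the flip rule above, which requires $g$ to decrease at such a step. If $i_1<i_0$, then $g$ decreases on $[i_1,i_0]$, while $\delta$ goes from $+1$ to $-1$. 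A $+1\to-1$ flip requires $g$ to increase, again a contradiction. Hence some cell of column $1$ is an extremum of $h$.

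The main obstacle is getting the $4$-cycle sign bookkeeping right, in particular that each direction of flip in $\delta$ pins the sign of the step in $g$, and treating the endpoint cells of the column correctly as extrema of $g$. For an endpoint cell $(1,1)$, being an extremum of $h$ requires only its two neighbours $(2,1)$ and $(1,2)$ to agree, and that is exactly "$g$ is an endpoint extremum at $1$ with matching $\delta$". Alternatively, the whole argument can be phrased via Lemma~\ref{lem:frozen}, since a boundary column lacks the frozen triple structure. The direct $4$-cycle argument avoids needing a phantom column, however, so I would present that one.
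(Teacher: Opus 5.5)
Your proof is correct and takes essentially the paper's route: the column walk with offset $\delta$ to the second column, the $4$-cycle flip rule ($\delta$ changes from $-1$ to $+1$ only on a descent and from $+1$ to $-1$ only on an ascent), and the local characterisation of extrema in the boundary column. The only difference is the final contradiction: you apply the flip rule to a single monotone run of $g$ between consecutive extrema, whereas the paper first shows the offset must be constant and then propagates the steps along the column until the last cell is forced to be an extremum.
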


\begin{proof}
It suffices to treat the first column, the last being symmetric. Its heights
$h_0,\dots,h_{m-1}$ form a $\pm1$ walk with steps $\varepsilon_i:=h_{i+1}-h_i$. The
second column's heights $h'_0,\dots,h'_{m-1}$ form a $\pm1$ walk too, at offset
$\eta_i:=h'_i-h_i\in\{\pm1\}$ from the first.

\medskip\noindent
A cell of the first column is a strict local extremum when its present neighbours
all lie above it or all below. Each differs from $h_i$ by $1$, so cell $i$ is an
extremum exactly when
\begin{equation}\label{eq:boundchar}
  \begin{cases}
    \varepsilon_0=\eta_0, & i=0,\\
    -\varepsilon_{i-1}=\varepsilon_i=\eta_i, & 0<i<m-1,\\
    \varepsilon_{m-2}=-\eta_{m-1}, & i=m-1.
  \end{cases}
\end{equation}

\medskip\noindent
Because the second column is a $\pm1$ walk,
$h'_{i+1}-h'_i=\varepsilon_i+(\eta_{i+1}-\eta_i)$ lies in $\{\pm1\}$, forcing
$\varepsilon_i=\eta_i$ wherever $\eta$ changes. Step $i$ is an \emph{ascent} when
$\varepsilon_i=+1$ and a \emph{descent} when $\varepsilon_i=-1$. Then $\eta$ changes from
$-1$ to $+1$ only at a descent, and from $+1$ to $-1$ only at an ascent.

\medskip\noindent
Suppose no cell is an extremum. Then $\eta$ is constant. Otherwise, by symmetry,
$\eta$ changes from $+1$ to $-1$ at some step $i$, an ascent. A rise back to $+1$
would need a descent, which \eqref{eq:boundchar} forbids at the interior cells while
$\eta=-1$, so $\varepsilon_j=+1$ and $\eta_j=-1$ for all $j>i$. Then
$\varepsilon_{m-2}=+1=-\eta_{m-1}$ makes cell $m-1$ an extremum, a contradiction.
With $\eta$ constant at $\eta^\ast$, cell $0$ forces $\varepsilon_0=-\eta^\ast$, and
the interior conditions carry $\varepsilon_i=-\eta^\ast$ up the column. Then
$\varepsilon_{m-2}=-\eta^\ast=-\eta_{m-1}$ makes cell $m-1$ an extremum, again a
contradiction. Hence some cell is an extremum.
\end{proof}

\begin{lemma}[Rotation quotient]\label{lem:quotient}
For every $m\ge2$, the generating function $\sum_{n\ge2}E_d(m,n)\,z^n$ has its only
pole at $z=1$, so $E_d(m,\cdot)$ is eventually a polynomial in $n$.
\end{lemma}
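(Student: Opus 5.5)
By Lemma~\ref{lem:poles}, every pole of $\sum_{n\ge2}E_d(m,n)\,z^n$ is the reciprocal $1/\lambda$ of a nonzero eigenvalue $\lambda$ of the extremum-free transfer matrix $T_0$. The plan is to determine the spectrum of $T_0$ exactly, using Lemma~\ref{lem:frozen}, and to show that every nonzero eigenvalue is a root of unity. The mod-$3$ colour-rotation normalisation will then collapse these roots of unity to $\lambda=1$.

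By Lemma~\ref{lem:frozen}, a transition $(a,b)\to(b,c)$ has weight $x^0$ iff $c=b+k$ with $a=b-k$ for some $k\in\{1,2\}$. So $T_0$ sends the pair $(b-k,b)$ to $(b,b+k)$, with the same $k$, and has no other nonzero entries. Writing each admissible pair as $(b-k,b)$ for $k\in\{1,2\}$ with $b$ arbitrary, $T_0$ is the partial permutation $(b,k)\mapsto(b+k,k)$. A pair $(a,b)$ that is not a uniform shift, meaning $b-a$ is not constant, is a zero row of $T_0$. Its column also vanishes, because the image $(b,b+k)$ is always a uniform shift. Restricted to uniform-shift pairs, $T_0$ is a genuine permutation. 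Each orbit is the $3$-cycle $b\mapsto b+k\mapsto b+2k\mapsto b$ (mod $3$), so $T_0^3$ acts as the identity there and $T_0$ is nilpotent elsewhere. The nonzero eigenvalues of $T_0$ are therefore cube roots of unity, and the generating function has poles only at cube roots of unity. This already shows that $E_d(m,\cdot)$ is an eventual quasi-polynomial of period dividing $3$.

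The remaining task, and the main obstacle, is to remove the poles at the primitive cube roots $\omega,\omega^2$. I would do this using the normalisation. Each height function fixes a lift of its colouring, and the column-by-column count is really a count of colourings. Globally adding $1$ mod $3$ to every colour commutes with $T_m(x)$, since extremality depends only on colour equalities. I would therefore pass to the quotient by this $\Z/3$ action. On the quotient, the frozen step $(b-k,b)\mapsto(b,b+k)$ lands in the same rotation class as its source, because $(b,b+k)=(b-k,b)+k$. The restricted permutation thus becomes the identity on orbits, and $T_0$ becomes (identity on frozen classes) $\oplus$ (nilpotent).

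Concretely, I would check that the boundary vectors are rotation-invariant after the $h(1,1)=0$ normalisation is accounted for. The cleanest route is to count all colourings with $\mathbf u$ uniform and then divide by $3$, which is legitimate because every colouring lifts to exactly one normalised height function up to the rotation, as in the Ginepro--Hull normalisation. With that check in place, $\mathbf u$ and $\mathbf v$ factor through the quotient. The quotient matrix $\bar T_m(x)$ then satisfies the Neumann-series argument of Lemma~\ref{lem:poles} with $\bar T_0$, whose only nonzero eigenvalue is $1$. The only pole is then at $z=1$. This subtlety is delicate: extrema are determined by colour equalities, not by the height lift, and Lemma~\ref{lem:ratGF} already sums over colourings with fixed corner colour.

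Once the only pole is $z=1$, the rational function has the form $P(z)/(1-z)^{K}$, and standard coefficient extraction shows that $E_d(m,n)$ is a polynomial in $n$ for $n$ beyond $\deg P$. Lemma~\ref{lem:boundary} is not needed for pole location itself. It matters for the order of the pole and the threshold: every extremum-free column is frozen and strictly interior, which bounds the number of frozen runs and hence the degree. That refinement belongs to the later contraction lemma rather than to this statement.
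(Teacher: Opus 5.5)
Your proposal is correct and is essentially the paper's proof. Lemma~\ref{lem:frozen} makes $T_0$ a permutation of the frozen pairs, in $3$-cycles $(b-k,b)\mapsto(b,b+k)=\rho^{k}(b-k,b)$, plus a zero block; passing to the colour-rotation quotient (the paper phrases it as projecting onto the $\rho$-invariant subspace) then collapses each $3$-cycle to a self-loop, leaving $1$ as the only nonzero eigenvalue for Lemma~\ref{lem:poles}, and the boundary check you flag as delicate is automatic: $\mathbf v$ is $\rho$-invariant and $\rho$ commutes with $T_m(x)$, so $\mathbf u^{\top}(I-zT_m(x))^{-1}\mathbf v$ factors through the quotient whatever corner normalisation $\mathbf u$ carries.
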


\begin{proof}
The rotation $\rho:\sigma\mapsto\sigma+1$ on $\Z/3$ preserves admissibility and the
extremum count, so it commutes with $T_m(x)$ for every $x$. It acts freely on
admissible pairs, since $(\sigma+1,\sigma'+1)=(\sigma,\sigma')$ is impossible. Fixing
the corner picks one representative per rotation orbit. Since the extremum count is
$\rho$-invariant, summing over these representatives averages over each orbit,
which projects the boundary vectors onto the $\rho$-invariant subspace. As $T_0$
commutes with that projection, the count sees only $T_0^{\mathrm{triv}}$, the action
of $T_0$ on $\rho$-invariant states.

\medskip\noindent
By Lemma~\ref{lem:frozen}, a pair $(\mathrm{prev},\mathrm{cur})$ takes an
extremum-free step to $(\mathrm{cur},\mathrm{next})$ exactly when the triple is
frozen of some slope $k$. Then $\mathrm{prev}=\mathrm{cur}-k$ and
$\mathrm{next}=\mathrm{cur}+k$. The states on cycles of $T_0$ are therefore the
frozen pairs $(\sigma-k,\sigma)$, each stepping to its unique successor
$(\sigma,\sigma+k)$. Every other pair has no extremum-free edge in or out, so it is
transient, of eigenvalue $0$. A frozen pair advances $\sigma$ by $k$ at each step
and returns after three, since $3k\equiv0$, so it lies on a $3$-cycle. Each
$3$-cycle contributes eigenvalues $\{1,\omega,\omega^2\}$, with
$\omega=e^{2\pi i/3}$, and these make up the unit-circle spectrum of $T_0$. The
$\omega$ and $\omega^2$ modes are carried by the rotation.

\medskip\noindent
The successor satisfies $(\sigma,\sigma+k)=\rho^{\,k}(\sigma-k,\sigma)$, so the
transfer step acts as $\rho^{\,k}$ on each $3$-cycle. On the $\rho$-invariant
subspace, $\rho$ acts as the identity, so each $3$-cycle collapses to a self-loop. Hence
$T_0^{\mathrm{triv}}$ is the identity on the frozen orbits and zero on the
transient ones, with spectrum $\{0,1\}$. The count is $\rho$-invariant, so only the
block $T_0^{\mathrm{triv}}$ contributes to its poles, and there the only nonzero
eigenvalue is $1$. Lemma~\ref{lem:poles}, applied to this block, therefore leaves
$z=1$ as the sole pole of $\sum_{n\ge2} E_d(m,n)\,z^n$. A rational function with its only pole at $z=1$ has
eventually-polynomial coefficients, so $E_d(m,\cdot)$ is eventually a polynomial in
$n$.
\end{proof}

The onset of this polynomial form may still grow with $m$. Lemma~\ref{lem:uniform}
below bounds the onset at $d-1$ for $d\ge3$ and the per-axis degree at $d-2$,
uniformly in $m$. This uniformity turns per-axis polynomiality into agreement on a
genuine rectangle.

Let $\Pi:=T_0^{\mathrm{triv}}$ be the projection the count sees
(Lemma~\ref{lem:quotient}), and $Q:=I-\Pi$. The boundary
vectors of Lemma~\ref{lem:ratGF} split by the extremum count of the boundary column they
carry, $\mathbf u=\sum_{i\ge1}x^i\mathbf u_i$ and
$\mathbf v=\sum_{i\ge1}x^i\mathbf v_i$. The sum starts at $i=1$ because
Lemma~\ref{lem:boundary} makes every boundary column carry an extremum. The
$\mathbf u_i$ and $\mathbf v_i$ do not depend on $z$. The components $\mathbf u_1$ and
$\mathbf v_1$ record a boundary column with a single extremum. The sharp onset
rests on a fixed-point identity for these, one that removes the single-extremum
boundary terms and so lowers the numerator degree bound in
Lemma~\ref{lem:uniform}, sharpening its onset.

\begin{lemma}[Frozen single-extremum boundary]\label{lem:frozenbdy}
On the count block, $\mathbf u_1^{\!\top}Q=0$ and $Q\mathbf v_1=0$.
\end{lemma}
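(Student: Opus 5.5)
The plan is to show that $\mathbf u_1$ and $\mathbf v_1$ are supported entirely on \emph{frozen pairs} $(\sigma-k,\sigma)$. On the $\rho$-invariant count block, the proof of Lemma~\ref{lem:quotient} showed that $\Pi=T_0^{\mathrm{triv}}$ acts as the identity on the frozen orbits and as zero on the transient ones. It then follows at once that $\Pi\mathbf v_1=\mathbf v_1$ and $\mathbf u_1^{\!\top}\Pi=\mathbf u_1^{\!\top}$, that is, $Q\mathbf v_1=0$ and $\mathbf u_1^{\!\top}Q=0$. This reduces the lemma to a purely combinatorial statement about a boundary column together with its single horizontal neighbour.

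The combinatorial claim is this. Let a boundary column carry exactly one strict local extremum, with heights $h_i$, and let its neighbouring column have heights $h'_i$. Then the offset $\eta_i=h'_i-h_i\in\{\pm1\}$ is constant in $i$. Reducing mod $3$, a constant offset $\eta^\ast$ gives a pair whose columns differ by the constant slope $k\equiv\eta^\ast\pmod 3$ in every row. This is exactly a frozen pair in the sense of Lemma~\ref{lem:frozen}, read as $(\mathrm{prev},\mathrm{cur})$ for $\mathbf u$ or as $(\mathrm{cur},\mathrm{next})$ for $\mathbf v$. By the left--right reflection symmetry, it suffices to treat the first column, that is, $\mathbf u_1$.

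To prove the claim I reuse the notation and the characterisation \eqref{eq:boundchar} from the proof of Lemma~\ref{lem:boundary}. Suppose $\eta$ is not constant. By the symmetry $h\mapsto-h$, assume it changes from $+1$ to $-1$ between rows $i$ and $i+1$. As shown there, this forces $\varepsilon_i=\eta_i=+1$, so step $i$ is an ascent.

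\emph{Forward.} The argument of Lemma~\ref{lem:boundary} already yields an extremum at some row $\ge i+1$. Either a descent occurs at an interior cell while $\eta=-1$, which makes that cell an extremum by \eqref{eq:boundchar}, or the ascents persist to the end and cell $m-1$ is an extremum.

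\emph{Backward.} I run the mirror induction from row $i$ downward. Row $i$ has $\eta_i=+1$ and $\varepsilon_i=+1$. If $i>0$ and $\varepsilon_{i-1}=-1$, then row $i$ is an extremum. Otherwise $\varepsilon_{i-1}=+1$. If moreover $\eta_{i-1}=+1$, the same situation recurs one row lower. If instead $\eta$ switches at step $i-1$, then since a switch from $-1$ up to $+1$ happens only at a descent, we would need $\varepsilon_{i-1}=-1$, a contradiction. Descending in this way, we reach either an interior extremum at some row $\le i$, or row $0$ with $\varepsilon_0=\eta_0=+1$, which is an extremum by \eqref{eq:boundchar}.

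These two extrema lie in the disjoint ranges $\le i$ and $\ge i+1$. So the column carries at least two extrema, contradicting the assumption that it has exactly one, and therefore $\eta$ is constant.

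The main obstacle I expect is bookkeeping rather than the combinatorics: identifying precisely which pair state each boundary vector component indexes, and checking that the corner normalisation and the $\rho$-averaging keep the support statement intact on the count block. For the averaging, I will note that $\rho$ maps frozen pairs to frozen pairs, so the projection of a vector supported on frozen pairs stays supported on frozen orbits. I also need to confirm that $\mathbf u_1$ and $\mathbf v_1$ index pairs whose own boundary column is the one whose extrema are being counted, so that the offset $\eta$ above is indeed the slope of the recorded pair.
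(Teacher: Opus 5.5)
Your proof is correct, but it is organised differently from the paper's.

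The paper proves the fixed-point identity $\mathbf u_1^{\!\top}T_0=\mathbf u_1^{\!\top}$ directly on states. For each admissible pair $(b,c)$ it counts the columns $a$ that carry one extremum against $b$ and leave $b$ extremum-free, in two stages. First it shows that any such $a$ is a uniform height shift $b\pm1$: the walk of $a$ is unimodal, so $\eta$ falls to $-1$ at the peak, rises after, and is pinned at $-1$ at both corners. Then a ``shift count'' case analysis on the extrema of $b$ against $c$ shows that exactly one shift is valid precisely when $b$ has a single extremum.

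You prove only the support statement and let Lemma~\ref{lem:quotient} do the linear algebra. For a $\rho$-invariant vector $\mathbf w$, the unique $T_0$-predecessor of a frozen pair $(\sigma,\sigma+k)$ is $\rho^{-k}(\sigma,\sigma+k)$. Hence $\mathbf w^{\!\top}T_0$, and likewise $T_0\mathbf w$, is just $\mathbf w$ restricted to frozen pairs, so support on frozen pairs is equivalent to being fixed.

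In effect, the paper's shift count re-derives in height language the invariance $\mathbf u_1(b-k,b)=\mathbf u_1(b,b+k)$. Your route gets this for free, because the extremum test (neighbours sharing one colour) is colour-rotation invariant. You should state that invariance explicitly, since it is the one fact beyond the support claim that your reduction uses.

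Your proof of the support claim also differs from the unimodality argument, and is somewhat cleaner. A switch of $\eta$ from $+1$ to $-1$ at step $i$ forces an ascent there. Propagating forward and backward through \eqref{eq:boundchar} then yields extrema in the disjoint ranges $\le i$ and $\ge i+1$.

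The paper's route stays self-contained in the height-walk picture and does not lean on the orbit description of $T_0$. Yours is shorter, and it cleanly separates the combinatorics (support on frozen pairs) from the structure of $\Pi$.
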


\begin{proof}
By left--right symmetry, it suffices to prove $\mathbf u_1^{\!\top}T_0=\mathbf u_1^{\!\top}$ on the
count block. Its mirror identity $T_0\mathbf v_1=\mathbf v_1$
then holds, and $\mathbf u_1^{\!\top}Q=0$ and $Q\mathbf v_1=0$ follow. Written on
the states, $\mathbf u_1^{\!\top}T_0=\mathbf u_1^{\!\top}$ concerns the columns $a$ that may sit to
the left of $b$. A column $a$ is \emph{valid} when it is admissible with $b$, carries one
extremum with right neighbour $b$, and leaves $b$ extremum-free in the triple
$(a,b,c)$. The claim is that for every admissible pair $(b,c)$,
\begin{equation}\label{eq:frozenid}
  \#\{\text{valid }a\}=[\,b\ \text{has one extremum with right neighbour }c\,].
\end{equation}
A left neighbour can only destroy extrema of $b$, since an extremum needs its
present neighbours to share a colour and a new neighbour may break that. So a valid
$a$ takes the third colour at each extremum cell of $b$, lying above $b$ at a
maximum and below at a minimum.

\medskip\noindent
The columns $a$ and $b$ lift to height walks $\alpha,\beta$ as in the proof of
Lemma~\ref{lem:boundary}, with steps $\varepsilon_i:=\alpha_{i+1}-\alpha_i$ and
offset $\eta_i:=\beta_i-\alpha_i$, both in $\{\pm1\}$, and $\beta$ again a $\pm1$
walk. That proof's flip rule lets $\eta$ change from $-1$ to $+1$
only on a descent, and from $+1$ to $-1$ only on an ascent.

\medskip\noindent\emph{Every valid $a$ is a uniform shift.} A valid $a$ has one
extremum; say a maximum at $p$, so $\eta_p=-1$. Then $\alpha$ has a single local
maximum. Suppose instead a second maximum $p''>p$, with a local minimum $\mu$
between them; the case $p''<p$ is symmetric. On the descent $[p,\mu]$, the flip
rule bars every $\eta$-change $+1\to-1$. So either $\eta$ reaches $+1$ before $\mu$,
making the walk minimum $\mu$ an extremum of $a$, as $\eta_\mu=+1$ there; or $\eta$
stays $-1$ through $\mu$, and on the ascent $[\mu,p'']$ the flip rule bars
$-1\to+1$, so $\eta_{p''}=-1$ makes $p''$ an extremum. Either way, $a$ has a second
extremum, against validity. So $\alpha$ is unimodal, with $\varepsilon_i=+1$ for
$i<p$ and $-1$ for $i\ge p$. Then the increment
\[
  \eta_{i+1}-\eta_i=(\beta_{i+1}-\beta_i)-\varepsilon_i
\]
is $\le0$ for $i<p$ and $\ge0$ for $i\ge p$, so $\eta$ falls to $\eta_p=-1$ at the peak and rises after. It
remains to pin both corners at $-1$. Consider cell $0$. If $p=0$, then
$\eta_0=\eta_p=-1$. Otherwise, $p>0$ gives $\varepsilon_0=+1$ by unimodality, and
cell $0$ is not an extremum, so the test $\varepsilon_0=\eta_0$ of
Lemma~\ref{lem:boundary} fails, forcing $\eta_0=-1$. Cell $m-1$ is the same. If
$p=m-1$, then $\eta_{m-1}=-1$. Otherwise, $\varepsilon_{m-2}=-1$ and cell $m-1$ is
not an extremum, so $\varepsilon_{m-2}=-\eta_{m-1}$ fails, forcing $\eta_{m-1}=-1$.
A $\pm1$ sequence non-increasing to $-1$ then non-decreasing back
to $-1$ is constant, so $\eta\equiv-1$ and $a=b+1$. A minimum gives $a=b-1$.

\medskip\noindent\emph{The shift count.} The column $b+1$ lies above $b$, so it
destroys exactly the maxima of $b$ and leaves its minima. Its own extrema against
$b$ are the local maxima of $\beta$, endpoints included and independent of $c$;
likewise, $b-1$ destroys the minima and has extrema at the local minima. A shift is
valid only if it destroys every extremum of $b$, so $b+1$ can be valid only when
$b$ has no minimum, and $b-1$ only when $b$ has no maximum. By
Lemma~\ref{lem:boundary}, $b$ carries at least one extremum. Suppose it carries
exactly one, a maximum. Then $b$ has no minimum, and the same unimodality applied
to $b$ against $c$ gives $\beta$ a single local maximum; so $b+1$ has one extremum
and is valid, while $b-1$ leaves that maximum and is not. A single minimum is
symmetric. Suppose instead $b$ carries two or more extrema. Then either it has both
a maximum and a minimum, so neither shift destroys all of them; or it has two of
one type, say two maxima, so $b+1$ inherits two extrema from the two local maxima
of $\beta$. Either way, no shift is at once valid and single-extremum. The valid
columns therefore number $[\,b\ \text{has one extremum with right neighbour }c\,]$,
which is~\eqref{eq:frozenid}.
\end{proof}

The per-axis degree rests on a separate contraction, of frozen runs rather than
boundary terms.

\begin{lemma}[Frozen-run contraction]\label{lem:contract}
Fix $d\ge3$ and $m\ge2$. Contracting every maximal frozen run of columns of a
height function on $\Gmn$ with $d$ extrema to a single column preserves every
adjacent height difference, hence every extremum together with its sign, and
yields a height function on a grid of at most $2d-1$ columns, its \emph{type}.
For fixed $m$ the types form a finite set, and a type with $r\le d-1$ runs is
recovered on $\Gmn$ by extending each run to length at least one; for large
$n$ its preimages number a polynomial in $n$ of degree $r-1\le d-2$.
\end{lemma}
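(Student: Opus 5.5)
The plan is to work in heights rather than colours. The first step is to show that a frozen triple, as in Lemma~\ref{lem:frozen}, is a uniform horizontal ramp. If $(a,b,c)$ is frozen with slope $k$, then $a_i-b_i\in\{\pm1\}$ and $a_i-b_i\equiv -k\pmod 3$. This pins $a_i-b_i$ to one sign $-\varepsilon$, independent of $i$, and likewise $c_i-b_i=+\varepsilon$. So in heights $a=b-\varepsilon$ and $c=b+\varepsilon$ cellwise.

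Next I would take two consecutive frozen columns $b$ and $b'=b+\varepsilon$. Writing the frozen condition at $b'$ with its own sign $\varepsilon'$ gives $b=b'-\varepsilon'$, so $\varepsilon'=\varepsilon$. Hence a maximal frozen run of $L$ columns is a translate-by-$\varepsilon$ staircase $b,\,b+\varepsilon,\dots,b+(L-1)\varepsilon$. It is flanked by non-frozen columns $a=b-\varepsilon$ and $c=b+L\varepsilon$; these flanking columns exist because, by Lemma~\ref{lem:boundary}, the first and last columns carry extrema and so are not frozen.

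The contraction deletes the last $L-1$ columns of the run and subtracts $(L-1)\varepsilon$ from every column to its right. Within a column nothing changes. Across the seam, the kept column $b$ now meets $c-(L-1)\varepsilon=b+\varepsilon$, exactly as $b$ met $b+\varepsilon$ before. So every horizontal and vertical height difference that survives is one that already occurred, and the result is a height function. The normalisation $h(1,1)=0$ is untouched, since the first column is never deleted or shifted.

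To show extrema are preserved with their signs, I would check that every surviving column sees the same triple of neighbouring columns, up to a common additive shift.
\begin{itemize}
\item A non-frozen column away from the seam, and the column $a$ on the left of the run, keep their exact triples.
\item The column $c$ still has left neighbour $c-\varepsilon$ after the shift.
\item The kept column $b$ still has neighbours $b\mp\varepsilon$.
\end{itemize}
Extremality depends only on these relative differences, so the extremum set and the sign of each extremum carry over. The deleted columns were frozen and hence extremum-free by Lemma~\ref{lem:frozen}, so no extremum is lost.

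For the size bound, Lemma~\ref{lem:frozen} says a column is extremum-free iff it is frozen, with boundary columns covered by Lemma~\ref{lem:boundary}. So in the type every non-frozen column carries at least one of the $d$ extrema, giving at most $d$ non-frozen columns. Each gap between consecutive non-frozen columns holds at most one contracted frozen column, and there are at most $d-1$ gaps. This gives at most $2d-1$ columns and $r\le d-1$ runs.

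Finiteness for fixed $m$ follows because a type has boundedly many columns, each a $\pm1$ walk on $P_m$. All heights are then bounded in terms of $m$ and $2d-1$ through the corner normalisation.

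For recovery, I would reverse the construction: given a type with $r$ frozen slots and $c_0$ non-frozen columns, re-extend slot $\ell$ to length $L_\ell\ge1$ by inserting the staircase and shifting the right part back. This is a height function on $\Gmn$ with $n=c_0+\sum L_\ell$. The inserted columns form frozen triples, so they are extremum-free, and the flanking columns see the same relative neighbourhoods, so the extremum data is again preserved. Contraction recovers the type and the $L_\ell$, and extension inverts contraction, so this is a bijection. The preimages of the type on $\Gmn$ are therefore counted by the compositions of $n-c_0$ into $r$ positive parts. That count is $\binom{n-c_0-1}{r-1}$, a polynomial in $n$ of degree $r-1\le d-2$ once $n\ge c_0+r$, and $c_0+r\le 2d-1$.

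I expect the main obstacle to be the seam bookkeeping in the preservation argument: confirming that the kept column and the column $c$ really see identical relative neighbourhoods, including at row $0$ and row $m-1$ where vertical neighbours are missing. I would handle this by stating once that extremality at a cell is a function of the differences to its present neighbours, and that the shift changes no such difference.
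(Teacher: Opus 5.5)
Your proposal is correct and follows the paper's own route. Both arguments classify columns as occupied or frozen via Lemmas~\ref{lem:frozen} and~\ref{lem:boundary}, contract each maximal frozen run (a single-slope staircase) to one column, bound the occupied columns by $d$ and the runs by $d-1$, and recover the preimages of a type as the compositions of the run lengths. Your height-level checks of the staircase and the seam, and the explicit count $\binom{n-c_0-1}{r-1}$, supply details that the paper only asserts.
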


\begin{proof}
A column is \emph{occupied} when it carries an extremum. By
Lemma~\ref{lem:boundary} the two boundary columns are occupied, and by
Lemma~\ref{lem:frozen} a middle column is occupied exactly when its triple is not
frozen, so the $d$ extrema lie in at most $d$ occupied columns. A maximal run of
consecutive frozen columns lies strictly between two occupied columns and, by
Lemma~\ref{lem:frozen}, has a single slope in $\{1,2\}$; its columns step by that
slope from the first, so the run is fixed by its first column, its slope, and its
length. Each run lies between two consecutive occupied columns, so the runs number
at most one fewer than the occupied columns, hence at most $d-1$. Contracting every maximal frozen run to a single column and shifting
the later columns to match preserves every adjacent height difference and so
every extremum together with its sign. The image has at
most $2d-1$ columns. For fixed $m$ the types form a finite set, and a type with
$r$ runs is recovered by extending each run to length at least one, a count
polynomial in $n$ of degree $r-1$ for large $n$.
\end{proof}

\begin{lemma}[Uniform onset and degree]\label{lem:uniform}
For every $d\ge3$ and $m\ge2$, the count $E_d(m,\cdot)$ agrees on
$\{n\ge d-1\}$ with a single polynomial in $n$ of degree at most $d-2$.
\end{lemma}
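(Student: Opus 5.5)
We prove Lemma~\ref{lem:uniform} by working with the generating function $F_m(z):=\sum_{n\ge2}E_d(m,n)\,z^n$ and controlling the degree of its numerator. By Lemma~\ref{lem:quotient} its only pole is at $z=1$. The aim is to write
\[
  F_m(z)=\frac{P(z)}{(1-z)^{d-1}}
\]
with $\deg P\le d$. Once this holds, the coefficient of $z^n$ equals $\sum_k P_k\binom{n-k+d-2}{d-2}$. Each summand is a polynomial in $n$ of degree $d-2$ as soon as $n\ge k-(d-2)$, which is at most $n\ge2$. The binomial $\binom{n-k+d-2}{d-2}$, read as a polynomial, vanishes at $n=k-1,\dots,k-d+2$ and so agrees with the true coefficient, which is zero when $n<k$. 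The agreement region is therefore set by how large $\deg P$ is relative to the pole order. This is the standard fact that a rational function with pole order $r$ at $1$ and numerator degree $D$ has polynomial coefficients for $n>D-r$. So the whole task is to bound the pole order by $d-1$ and the numerator degree by $d$, uniformly in $m$.

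\textbf{Step 1: pole order.} Expand $[x^d]$ of the resolvent on the count block as in Lemma~\ref{lem:poles}. Write $R_0=(I-zT_0)^{-1}$ and use $T_0^{\mathrm{triv}}=\Pi$, which is idempotent with spectrum $\{0,1\}$ (Lemma~\ref{lem:quotient}). This gives
\[
  R_0=\frac{\Pi}{1-z}+Q(I-zT_0)^{-1}Q,
\]
where the second term is a polynomial in $z$ because $T_0$ is nilpotent on the transient part. By Lemma~\ref{lem:boundary} the boundary vectors carry at least one factor of $x$ each. A term of the Neumann series with $k$ insertions of $xB$ therefore has total $x$-degree at least $k+2$, so $k\le d-2$. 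Such a term has at most $k+1$ factors of $R_0$, hence pole order at most $d-1$. This matches the degree bound $d-2$ and is essentially the frozen-run count of Lemma~\ref{lem:contract}, with at most $d-1$ runs.

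\textbf{Step 2: numerator degree.} Each $R_0$ factor contributes $\Pi/(1-z)$ plus a polynomial part whose $z$-degree is bounded by the nilpotency index of $T_0$ on transient states. That index is $1$ or $2$: by Lemma~\ref{lem:frozen}, a non-frozen state has no extremum-free edge in or out, so the $Q$-part of $zT_0$ contributes at most a bounded power, plausibly just the constant $Q$. I would verify that $QT_0Q=0$ on the count block, so that $Q R_0 Q=Q$. With this, each $R_0$ is $\Pi/(1-z)+Q$, and each insertion $zxB$ adds one power of $z$. The prefactor $z^2$, the $k\le d-2$ insertions, and the $(1-z)^{k+1}$ denominators then combine to numerator degree about $d$.

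\textbf{Step 3: the extremal terms.} This is the main obstacle, and it is where Lemma~\ref{lem:frozenbdy} enters. The terms attaining the maximal pole order, or the maximal numerator degree relative to it, are those in which the boundary vectors carry exactly one extremum. These are $\mathbf u_1$ and $\mathbf v_1$, the only components that leave the full budget of $d-2$ insertions. Since $\mathbf u_1^{\top}Q=0$ and $Q\mathbf v_1=0$, the polynomial $Q$-parts adjacent to the boundary vanish exactly for these terms. This removes the extra power of $z$ that would push the onset past $d-1$. The remaining terms, with $\mathbf u_i$ or $\mathbf v_i$ for $i\ge2$, spend more of the $x$-budget at the boundary. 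They have strictly fewer insertions and so have slack in both pole order and numerator degree.

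I expect the careful bookkeeping of this comparison, showing that every term satisfies $\deg(\text{numerator})-\text{pole order}\le 1$ and hence onset $n\ge d-1$, to be the delicate part. I would organise it by the pair (number of insertions, boundary extremum counts $i,j$), and check the bound case by case. Independence from $m$ is automatic, because none of these bounds refers to $m$.
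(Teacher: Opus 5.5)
Your route is the paper's own. It uses the generating function in $z$ and expands the resolvent about the idempotent $T_0=\Pi$, so that $R_0=\Pi/(1-z)+Q=(I-zQ)/(1-z)$. Both boundary vectors carry a power of $x$, so $k\le d-2$ and the pole order is at most $d-1$; this alone already gives the degree bound. Lemma~\ref{lem:frozenbdy} then removes the $Q$-parts next to single-extremum boundaries.

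The gap is the quantitative target. You aim for $F_m=P/(1-z)^{d-1}$ with $\deg P\le d$, i.e.\ numerator degree minus pole order at most $1$. By the fact you quote, this gives polynomial coefficients for all $n\ge2$, not $n\ge d-1$, so your closing ``hence onset $n\ge d-1$'' does not follow. The target also already fails at $d=4$. If it held for every $m$, transposition and Lemma~\ref{lem:interp} with $N=2$ would make $E_d$ a single polynomial on all of $\{m,n\ge2\}$, which would kill every correction in Table~\ref{tab:corr}. Concretely, $E_4(2,n)=p_4(2,n)+B_4^{(2)}(n)=2n^2-2n-4$ for $n\ge3$, which vanishes at $n=2$, whereas $E_4(2,2)=2$ (the two checkerboards). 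So at $m=2$ the numerator over $(1-z)^3$ has degree $5$, not at most $4$.

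Your Step~2 estimate undercounts for two reasons. Each of the $k+1$ resolvent factors has numerator $I-zQ$ of degree one, and Lemma~\ref{lem:frozenbdy} removes only the two outermost; the interior factors between successive $B_j$ survive. In addition, terms with $k<d-2$ must be padded by $(1-z)^{d-2-k}$ to reach the common denominator.

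The correct target is $\deg P\le 2d-3$, which gives agreement for $n>(2d-3)-(d-1)=d-2$. The bookkeeping you defer is exactly how one gets it. A term with boundary components $\mathbf u_i,\mathbf v_{i'}$ and insertions $B_{j_1},\dots,B_{j_k}$ contributes
\[
  z^{2+k}(1-z)^{d-2-k}\,\mathbf u_i^{\top}(I-zQ)B_{j_1}(I-zQ)\cdots B_{j_k}(I-zQ)\,\mathbf v_{i'}
\]
to $P$, of degree at most $d+k+1$. Suppose $s\in\{0,1,2\}$ of the indices $i,i'$ equal $1$. Then Lemma~\ref{lem:frozenbdy} deletes $s$ factors $I-zQ$. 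At the same time $i+i'\ge4-s$ forces $k\le j_1+\dots+j_k=d-i-i'\le d-4+s$. The two effects cancel, giving degree at most $2d-3$ in every case. For $s=2$ the two deleted factors are distinct, because the $d-2\ge1$ interior extrema force $k\ge1$. So the $s=0$ terms need no shaving at all, and the $s=2$ terms need two shavings, not the single ``extra power'' you describe.
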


\begin{proof}
The degree bound follows from the frozen-run contraction of
Lemma~\ref{lem:contract}, the sharper onset from the generating function. For the
degree: wherever $E_d(m,\cdot)$ agrees with a polynomial in $n$, that polynomial
has degree at most $d-2$, since a height function with $d$ extrema contracts to
one of finitely many types on at most $2d-1$ columns and a type with $r\le d-1$
runs is recovered as a polynomial in $n$ of degree $r-1\le d-2$. The onset $n\ge d-1$
comes from bounding the numerator degree of $\sum_{n\ge2}E_d(m,n)\,z^n$, with
Lemma~\ref{lem:frozenbdy} shaving the single-extremum boundary terms.

\medskip\noindent By Lemma~\ref{lem:ratGF}, the generating function
is
\[
  \sum_{n\ge2}E_d(m,n)\,z^n=z^2\,[x^d]\,\mathbf u^{\!\top}(I-zT_m(x))^{-1}\mathbf v .
\]
Split $T_m(x)=T_0+\sum_{j\ge1}x^jB_j$, where $B_j$ marks a middle column carrying
$j$ extrema. On the count block $T_0$ is the idempotent $\Pi$ of
Lemma~\ref{lem:quotient}; expanding the resolvent about $T_0$ gives
\[
  (I-zT_m(x))^{-1}=\sum_{k\ge0}R_0\Big(z\textstyle\sum_{j\ge1}x^jB_j\,R_0\Big)^k,
  \qquad R_0:=(I-zT_0)^{-1}=\frac{I-zQ}{1-z},
\]
the closed form following from $\Pi^2=\Pi$ and $Q=I-\Pi$.

\smallskip\noindent
A term of the $k$-th summand carries $\mathbf u_i,\mathbf v_{i'}$ and factors
$B_{j_1},\dots,B_{j_k}$, reaching $[x^d]$ only when $i+i'+j_1+\dots+j_k=d$. Since
$i,i'\ge1$ and each $j_\ell\ge1$, the interior extrema $p:=\sum_\ell j_\ell$
number at most $d-2$, and $k\le p$. Its only pole is at $z=1$, of order at most
$k+1\le d-1$. With the generating function written as $M(z)/(1-z)^{d-1}$, this term
contributes
\[
  z^{\,2+k}(1-z)^{d-2-k}\,
  \mathbf u_i^{\!\top}(I-zQ)B_{j_1}(I-zQ)\cdots B_{j_k}(I-zQ)\,\mathbf v_{i'}
\]
to $M$. Its $z$-degree is at most $(2+k)+(d-2-k)+(k+1)=d+k+1$: the $z^{2+k}$
prefactor, the $(1-z)^{d-2-k}$ power, and the $k+1$ resolvent factors in turn.

\smallskip\noindent
Boundary shaving lowers this bound. By Lemma~\ref{lem:frozenbdy}, a unit index
removes one factor, giving $\mathbf u_1^{\!\top}(I-zQ)=\mathbf u_1^{\!\top}$ when $i=1$
and $(I-zQ)\mathbf v_1=\mathbf v_1$ when $i'=1$. When $i=i'=1$ these shave two
distinct factors, since $p=d-2\ge1$ forces $k\ge1$ and hence at least two
$(I-zQ)$ factors, so both may be removed. Let $s\in\{0,1,2\}$ be the
number of unit indices. Removing $s$ factors drops the degree by $s$, while
$i+i'\ge4-s$ bounds $k\le p\le d-4+s$. The two effects cancel:
\[
  (d+k+1)-s \;\le\; d+(d-4+s)+1-s \;=\; 2d-3 .
\]
Every term therefore has degree at most $2d-3$, so $\deg M\le2d-3$. The
coefficients of $M(z)/(1-z)^{d-1}$ then agree with a single polynomial in $n$
for $n>(2d-3)-(d-1)=d-2$, that is for $n\ge d-1$.
\end{proof}

\subsection{Bivariate interpolation}

The count $E_d$ is polynomial in each variable separately
(Lemma~\ref{lem:uniform}). The following lemma assembles these one-variable
polynomials into a single bivariate polynomial.

\begin{lemma}[Bivariate interpolation]\label{lem:interp}
Let $N\ge1$ and $D\ge0$. If a function $F:\{N,N+1,\dots\}^2\to\Q$ has row
functions $F(m_0,\cdot)$ and column functions $F(\cdot,n_0)$ that are all
polynomials of degree at most $D$, then $F$ agrees on $\{m,n\ge N\}$ with a
single bivariate polynomial of degree at most $D$ in each variable.
\end{lemma}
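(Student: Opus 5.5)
Fix the finite set of nodes $N,N+1,\dots,N+D$ and build the candidate polynomial by Lagrange interpolation in both variables. Let $L_0,\dots,L_D$ be the Lagrange basis polynomials of degree $D$ on these nodes, so $L_k(N+\ell)=\delta_{k\ell}$. Define
\[
  P(m,n)=\sum_{k=0}^{D}\sum_{\ell=0}^{D}F(N+k,N+\ell)\,L_k(m)\,L_\ell(n),
\]
which is a bivariate polynomial over $\Q$ of degree at most $D$ in each variable. The plan is to show $F=P$ on the whole quadrant $\{m,n\ge N\}$ in two one-variable extension steps.

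First, I will match the columns through the nodes. Fix $m_0\ge N$ and a node $n_0=N+\ell$. The column function $F(\cdot,N+\ell)$ is by hypothesis a polynomial of degree at most $D$, hence it equals its own Lagrange interpolant $\sum_k F(N+k,N+\ell)L_k(m)$ on the $D+1$ nodes. By construction, $P(m,N+\ell)$ is exactly this interpolant, since $L_{\ell'}(N+\ell)=\delta_{\ell\ell'}$. So $F(m_0,N+\ell)=P(m_0,N+\ell)$ for every $m_0\ge N$ and every node index $\ell$.

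Second, I will extend along rows. For fixed $m_0\ge N$, both $F(m_0,\cdot)$ (by hypothesis) and $P(m_0,\cdot)$ (by construction) are polynomials in $n$ of degree at most $D$. The first step shows they agree at the $D+1$ distinct points $N,\dots,N+D$. Two polynomials of degree at most $D$ agreeing at $D+1$ points are identical, so $F(m_0,n)=P(m_0,n)$ for all $n\ge N$. Since $m_0\ge N$ was arbitrary, $F$ agrees with $P$ on $\{m,n\ge N\}$.

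No step is a real obstacle; the only care needed is that "row function is a polynomial" means agreement on the whole half-line $\{N,N+1,\dots\}$, which is what lets a polynomial identity be read off from finitely many values. Only the column hypothesis at the $D+1$ node columns and the row hypothesis are actually used. The degree bound "at most $D$ in each variable" is immediate from the product form of $P$, and uniqueness of $P$ follows from the same finite-node argument.
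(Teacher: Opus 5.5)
Your proof is correct and is essentially the paper's argument: Lagrange interpolation on the $D+1$ nodes $N,\dots,N+D$, followed by the fact that two polynomials of degree at most $D$ agreeing at $D+1$ points coincide. The paper interpolates only in $m$, with the row polynomials $F(m_0,\cdot)$ at the node rows as coefficients, and then checks every column; your tensor-product form gives the same polynomial and runs the check in the mirrored order, node columns first and then all rows.
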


\begin{proof}
Let $m_0$ range over the $D+1$ consecutive integers $N,N+1,\dots,N+D$. Let
$g_{m_0}(n):=F(m_0,n)$ be the corresponding row polynomials, each of degree at
most $D$. Set
\[
  P(m,n)=\sum_{m_0}g_{m_0}(n)\,L_{m_0}(m),
\]
where $L_{m_0}$ is the degree-$D$ Lagrange basis polynomial for these nodes. Then
$P$ has degree at most $D$ in each variable. For each fixed $n_0$, the map
$m\mapsto P(m,n_0)$ is the degree-$\le D$ interpolant of the values $F(m_0,n_0)$
at the nodes, and $m\mapsto F(m,n_0)$ is itself a polynomial of degree at most
$D$. Two such polynomials agreeing at $D+1$ nodes coincide, so
$P(m,n_0)=F(m,n_0)$ for all $m\ge N$. Since $n_0$ is arbitrary, $P\equiv F$.
\end{proof}

\begin{theorem}[Polynomiality on the high region]\label{thm:poly}
For each $d\ge2$, $E_d$ agrees on $\{m,n\ge\max(d-1,2)\}$ with a single symmetric
bivariate polynomial $p_d(m,n)$ of degree exactly $d-2$ in each variable. The
existence, symmetry, region, and per-axis degree are unconditional. The total
degree of $p_d$ is exactly $d-2$, unconditionally for $d\le8$ and under the
degree bound of Conjecture~\ref{conj:G2} for $d\ge9$.
\end{theorem}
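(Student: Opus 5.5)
The plan assembles Theorem~\ref{thm:poly} from the per-axis results of this section, then reads off the degrees from the single-side analysis of Sections~\ref{sec:onedim}--\ref{sec:leading}. Set $N=\max(d-1,2)$ and $D=d-2$.

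\emph{Existence for $d\ge3$.} Lemma~\ref{lem:uniform} says that for every $m\ge2$ the row function $E_d(m,\cdot)$ agrees on $\{n\ge d-1\}$ with a polynomial of degree at most $d-2$. The transpose $h\mapsto h^{\top}$ maps height functions on $\Gmn$ bijectively to those on $G_{n,m}$ and preserves extrema. This gives $E_d(m,n)=E_d(n,m)$, so the column functions are polynomial on the same range. Lemma~\ref{lem:interp}, applied on $\{N,N+1,\dots\}^2$, then produces a single bivariate $p_d$ of degree at most $d-2$ in each variable.

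\emph{The case $d=2$.} Here the region is $m,n\ge2$. By Lemma~\ref{lem:coneenv} the configuration has one apex, and the maximum is the unique farthest cell, which must be a corner. So $E_2$ is constant, namely $4$: the apex sits at one of the four corners, and the offset is pinned by $h(1,1)=0$. This is the degree-$0$ polynomial.

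\emph{Symmetry of $p_d$.} The identity $E_d(m,n)=E_d(n,m)$ holds on the whole high region. The polynomial $p_d(m,n)-p_d(n,m)$ therefore vanishes on an infinite grid $\{N,\dots\}^2$, so it is the zero polynomial.

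\emph{Exact per-axis degree.} Fix $m$ large and compare the $n^{d-2}$ coefficient. By Lemma~\ref{lem:splitedge}, $E_d$ is a nonnegative combination of $N_{(a,b)}$. The single-side rows contribute $2f_{(a,b)}(n)$, with positive leading coefficient $4/(d-2)!$ in total by Proposition~\ref{prop:singleside}. The column families contribute $2f_{(a,b)}(m)$, which is constant in $n$. The remainders are nonnegative, with $n$-degree at most $d-2$ by Lemma~\ref{lem:splitpoly}.

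Write the coefficient of $n^{d-2}$ in $p_d(m,\cdot)$ as $c(m)$, a polynomial in $m$. It equals $4/(d-2)!$ plus the $n^{d-2}$ coefficient of $\sum(2-\delta_{ab})R_{(a,b)}(m,\cdot)$. Since $R\ge0$ for large $n$, its top coefficient is $\ge0$, so $c(m)\ge4/(d-2)!>0$ for every large $m$. Hence $p_d$ has degree exactly $d-2$ in $n$, and by symmetry also in $m$. This is all unconditional.

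\emph{Total degree.} Since each variable has degree at most $d-2$, the total degree is at least $d-2$; this already follows from the exact per-axis degree. The upper bound $d-2$ is the genuine issue, because a mixed monomial $m^in^j$ with $i+j>d-2$ is not excluded by per-axis bounds. Under Conjecture~\ref{conj:G2}, the total degree of every remainder is at most $d-3$ for $d\ge5$, and the single-side terms have degree $d-2$. So $\deg p_d=d-2$ by the decomposition of Proposition~\ref{prop:leading}.

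\emph{Small and unconditional cases.} For $d\le4$ the total-degree bound is immediate. Each mixed monomial $m^in^j$ with $i,j\le d-2\le2$ would need checking, and I would use the explicit closed forms of Section~\ref{sec:data} (equivalently those of~\cite{Gupta2026companion}) through $d=7$. For $d=8$, I would invoke Proposition~\ref{prop:converse} via Corollary~\ref{cor:G2d8}, whose interpolated $p_8$ is obtained without the conjecture.

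\emph{Main obstacle.} The hard step is this unconditional total-degree bound for $5\le d\le8$. It depends on the data section pinning $p_d$ from finitely many exact values. Interpolating a polynomial of degree at most $d-2$ in each variable needs only $(d-1)^2$ values on the high region, which the transfer matrix supplies. From those I would read off the absence of monomials of total degree above $d-2$ directly.
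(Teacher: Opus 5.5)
Your proposal is correct and follows essentially the paper's proof. Existence and symmetry come from Lemma~\ref{lem:uniform}, transposition and Lemma~\ref{lem:interp}. The exact per-axis degree comes from the positive single-side coefficient $4/(d-2)!$ together with nonnegativity of the remainders. The total degree comes from Proposition~\ref{prop:leading} under Conjecture~\ref{conj:G2}, with $d\le8$ certified computationally.

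The one real variation is at $5\le d\le7$. There you read the total degree directly off the unconditional tensor-grid interpolant of $p_d$, which is what the paper itself does at $d=8$; the paper instead goes through its enumeration certification of the degree bound. Your claim that the bound is ``immediate'' for $d\le4$ holds only at $d=2$. At $d=3$ the per-axis bound still allows $mn$, and at $d=4$ it allows $m^2n$, $mn^2$ and $m^2n^2$, so these must be excluded by the computed forms, as your next sentence concedes.
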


\begin{proof}
For $d=2$, direct enumeration gives the constant $E_2\equiv4$ on
$\{m,n\ge2\}=\{m,n\ge\max(d-1,2)\}$; the constant $p_2\equiv4$ is symmetric with
per-axis and total degree $0=d-2$, all unconditional. Assume henceforth
$d\ge3$, so that $\max(d-1,2)=d-1$.

\medskip\noindent
The count $E_d(m,\cdot)$ agrees on $\{n\ge d-1\}$ with a polynomial in $n$ of
degree at most $d-2$, uniform in $m$ (Lemma~\ref{lem:uniform}). Transposition
$\Gmn\cong G_{n,m}$ preserves height functions and their extrema, so
$E_d(\cdot,n)$ is dually a polynomial in $m$ of degree at most $d-2$ on
$\{m\ge d-1\}$ for every $n$, and $E_d$ is symmetric. Bivariate interpolation
with threshold $d-1$ and degree $d-2$ (Lemma~\ref{lem:interp}) yields a
polynomial $p_d$ of degree at most $d-2$ in each variable agreeing with $E_d$ on
$H_d$. Both $p_d(m,n)$ and $p_d(n,m)$ agree with the symmetric $E_d$ on the symmetric
region $H_d$, and two polynomials agreeing on an infinite rectangle coincide, so
$p_d(m,n)=p_d(n,m)$. Below $H_d$, the count is recorded by
explicit corrections in Section~\ref{sec:data}.

\medskip\noindent
The per-axis degree is exactly $d-2$. The single-side count
(Lemma~\ref{lem:splitedge}) contributes a pure term
$C(d)\,m^{d-2}$ with $C(d)=4/(d-2)!>0$ (Proposition~\ref{prop:singleside}). The remainder has $m$-degree at most
$d-2$ and is a nonnegative count, so for each fixed $n$ it is a polynomial in $m$
nonnegative on $\{m\ge d-1\}$. Its coefficient of $m^{d-2}$ is therefore nonnegative,
equal to the leading coefficient when the degree is $d-2$ and zero otherwise. The $m^{d-2}$-coefficient of $E_d$ is
thus at least $C(d)>0$, so with the upper bound $d-2$ the degree in $m$ is
exactly $d-2$. By symmetry, so is the degree in $n$.

\medskip\noindent
With the per-axis degree $d-2$, the total degree is at most $2(d-2)$ a priori.
For $d=3,4$, direct computation gives total degree $d-2$. For $d\ge5$,
Proposition~\ref{prop:leading} identifies total degree $d-2$ with the degree bound
of Conjecture~\ref{conj:G2}. The single-side families supply the top-degree part
$C(d)(m^{d-2}+n^{d-2})$, and the bound rules out every monomial of higher total
degree. That bound holds unconditionally through $d=8$, and for the separable configurations at every $d$
(Theorem~\ref{thm:sepcase}); only the non-separable case at $d\ge9$ is open. Through
$d=8$ the bound is unconditional by direct enumeration for $d\le7$ and, at $d=8$, from
the independently determined $p_8$ (Corollary~\ref{cor:G2d8}). These certifications rest on the per-axis
polynomiality established above, not on the present total-degree claim, so the
total degree is $d-2$ unconditionally for $d\le8$ and under
Conjecture~\ref{conj:G2} for $d\ge9$.
\end{proof}

The interpolation route makes period $1$ a consequence of per-axis
polynomiality, as the row polynomials carry no period, so their Lagrange
combination carries none. The apparent period-$3$ behaviour seen before the quotient is
exactly the colour-rotation artefact removed by passing to $\OFG$.

% ===========================================================================
\section{The separable case of the degree bound}\label{sec:separable}
% Section 10 (sec:separable) LOCKED 2026-07-12 (holistic Adversary LOCK-WHOLE:
% 0 CRITICAL/WRONG, no blocking. Intro conj:G2 restatement faithful; arc
% intro -> lem:sep -> lem:dimid -> thm:sepcase -> residual -> d=7 certification
% complete + non-redundant (the d=4 break appears in BOTH subsections but as
% DISTINCT separable/non-separable families; "again" flags the parallel).
% Cross-section linkage confirmed: thm:sepcase supports the sec:leading
% back-citation; conj:G2 one meaning throughout; the partitions N=single-side+R
% (sec:leading) and N=N^s+N^ns (sec:separable) are bridged (single-side within
% separable); the six feeder-lemma restatements match; d<=7-certified /
% d>=8-open consistent across abstract/intro/sec:leading/sec:period; the split
% list is complete (a=1 always separable). Notation/sort clean section-wide
% (10.1 D1 + 10.2 E3 hold). Subsections locked: 10.1 = 494458c8, 10.2 = 3f72d6f1.
% Rubrics math-proofs, fit, english-usage, paper-writing, source-grounding, latex.)
% ===========================================================================

The total degree $d-2$ of $p_d$ rests on the
degree bound of Section~\ref{sec:leading} (Conjecture~\ref{conj:G2}), the
assertion that no remainder $R_{(a,b)}$ reaches the degree attained by the
single-side families. The bound is proved here for the separable case,
leaving the non-separable residual as the sole gap.

\subsection{The separable configurations}
% Subsection 10.1 (separable configurations) LOCKED 2026-07-12 (Adversary
% drafts-review PASS: D1 count-as-set sort fix, D2 dangling participle->Let, D3
% two content parens->product-zero form, D4 type-anchor tighten, D5 emph
% separable/non-separable (paper's 12-precedent definiendum convention, incl the
% twin single-side), D6 comma splice; applied + verified correct/complete; math
% re-derived ((d-2)-D identity, thm:sepcase both cases, d=4 necessity). Rubrics
% math-proofs, fit, english-usage, paper-writing, source-grounding, latex.)

A configuration is \emph{separable} if its envelope factors additively,
$E_{A,c}(i,j)=\phi(i)+\psi(j)$ for some functions $\phi,\psi$, equivalently if its
envelope has tropical rank one~\cite{DevelinSantosSturmfels2005}, and
\emph{non-separable} otherwise. Every single-side configuration is separable, since
Lemma~\ref{lem:edgered} splits its envelope into a row part and a column part. A
single-apex configuration is likewise separable, since its apex set is a $1\times1$ product
grid.

\begin{lemma}[Separability Criterion]\label{lem:sep}
An admissible configuration $(A,c)$ is separable iff its
apex set is a product grid $A=R\times C$ of a row set $R$ and a column set $C$,
and its offsets factor into a row part $\alpha_r$ and a column part
$\beta_\kappa$, $c_{(r,\kappa)}=\alpha_r+\beta_\kappa$.
\end{lemma}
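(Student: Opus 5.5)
The plan is to prove the two directions separately. The reverse direction is a direct computation with the lower envelope~\eqref{eq:Eac}. The forward direction goes through Theorem~\ref{thm:envelope}, which identifies the apexes with the strict local minima and the offsets with the envelope values there.

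\emph{($\Leftarrow$)} Suppose $A=R\times C$ and $c_{(r,\kappa)}=\alpha_r+\beta_\kappa$. Since $\dgrid$ is the Manhattan distance, each cone splits as
\[
c_{(r,\kappa)}+\dgrid((r,\kappa),(i,j))=\big(\alpha_r+|i-r|\big)+\big(\beta_\kappa+|j-\kappa|\big).
\]
A minimum over the product index set $R\times C$ of a sum $f(r)+g(\kappa)$ is $\min_r f(r)+\min_\kappa g(\kappa)$. Hence
\[
E_{A,c}(i,j)=\phi(i)+\psi(j),\qquad \phi(i)=\min_{r\in R}(\alpha_r+|i-r|),\qquad \psi(j)=\min_{\kappa\in C}(\beta_\kappa+|j-\kappa|),
\]
so the configuration is separable. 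No admissibility is needed for this direction.

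\emph{($\Rightarrow$)} Suppose $E:=E_{A,c}=\phi(i)+\psi(j)$. Since $(A,c)$ is admissible, $E$ is a height function by Theorem~\ref{thm:envelope}.

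First I would show that $\phi$ and $\psi$ are $\pm1$ walks. Across a vertical edge $(i,j)\to(i+1,j)$ only the $\phi$ term changes, so $|\phi(i+1)-\phi(i)|=1$. Likewise horizontal edges give $|\psi(j+1)-\psi(j)|=1$.

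Next I would characterise the strict local minima of $E$. A vertical neighbour of $(i,j)$ changes only $\phi$, and a horizontal neighbour changes only $\psi$. So $(i,j)$ is a strict local minimum of $E$ iff $i$ is a strict local minimum of $\phi$ on $P_m$ and $j$ is a strict local minimum of $\psi$ on $P_n$. Here endpoints count as extrema, as in Lemma~\ref{lem:binom}. For a degenerate side ($m=1$ or $n=1$) the single index is vacuously a minimum, and the condition in that direction is empty. Thus the set of strict local minima is $R\times C$, where $R$ is the set of strict minima of $\phi$ and $C$ that of $\psi$.

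By Theorem~\ref{thm:envelope}, $A$ equals this set and $c_{(r,\kappa)}=E(r,\kappa)=\phi(r)+\psi(\kappa)$. Setting $\alpha_r:=\phi(r)$ and $\beta_\kappa:=\psi(\kappa)$ gives the required factorisation. The split of the additive constant between $\phi$ and $\psi$ is immaterial.

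The main point needing care is the local-minimum characterisation. The argument rests on the fact that a neighbour in one axis moves only one summand, together with the unit-step property, which excludes level neighbours. Boundary cells need the endpoint convention, and the $1\times n$ edge cases need the vacuous-direction convention. Once that is in place, the uniqueness part of Theorem~\ref{thm:envelope} does the rest.
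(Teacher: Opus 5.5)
Your proposal is correct and follows essentially the same route as the paper: the $(\Leftarrow)$ direction is the same minimisation over the product index set $R\times C$. In the $(\Rightarrow)$ direction you likewise separate the neighbour differences to identify the strict local minima of $E_{A,c}$ as $R\times C$, then read off $A=R\times C$ and $c_{(r,\kappa)}=\phi(r)+\psi(\kappa)$. You cite Theorem~\ref{thm:envelope} where the paper uses Lemmas~\ref{lem:minatapex} and~\ref{lem:activemin} with activity directly, which is the same content, and your explicit treatment of the endpoint convention and the degenerate $m=1$ or $n=1$ case is a harmless refinement.
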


\begin{proof}
Suppose first that $A=R\times C$ and $c_{(r,\kappa)}=\alpha_r+\beta_\kappa$.
Because $(r,\kappa)$ ranges over the full product $R\times C$, the row and column
minimisations are independent, so
\[
  E_{A,c}(i,j)=\min_{r\in R,\,\kappa\in C}\big(\alpha_r+\beta_\kappa+|i-r|+|j-\kappa|\big)
        =\min_{r\in R}\big(\alpha_r+|i-r|\big)+\min_{\kappa\in C}\big(\beta_\kappa+|j-\kappa|\big).
\]
The first summand depends only on $i$ and the second only on $j$, so $(A,c)$ is
separable. Conversely, suppose $E_{A,c}=\phi+\psi$. The neighbour differences
separate,
\[
  E_{A,c}(i\pm1,j)-E_{A,c}(i,j)=\phi(i\pm1)-\phi(i),\qquad
  E_{A,c}(i,j\pm1)-E_{A,c}(i,j)=\psi(j\pm1)-\psi(j).
\]
Hence $(i,j)$ is a strict local minimum of $E_{A,c}$ iff $i$ is a strict
local minimum of $\phi$ and $j$ is a strict local minimum of $\psi$; the same holds
with maxima in place of minima. Let $R$ and
$C$ be the strict local minima of $\phi$ and $\psi$. Those of $E_{A,c}$ are then exactly the product $R\times C$. Every strict local
minimum is an apex (Lemma~\ref{lem:minatapex}), and by activity every apex is a
strict local minimum (Lemma~\ref{lem:activemin}), so these minima are exactly the
apexes and $A=R\times C$. At each apex, activity makes the envelope value the offset,
$c_{(r,\kappa)}=E_{A,c}(r,\kappa)=\phi(r)+\psi(\kappa)$, exhibiting the offset
factorisation.
\end{proof}

A product-grid apex set alone does not force separability, since the offsets must
also factor, a linear condition. Separability therefore carves out a
polyhedral subfamily, splitting the count
$N_{(a,b)}=N^{\mathrm{s}}_{(a,b)}+N^{\mathrm{ns}}_{(a,b)}$ into separable and
non-separable parts, each piecewise quasi-polynomial.

\begin{lemma}[Degree identity]\label{lem:dimid}
Among the configurations counted by $N^{\mathrm{s}}_{(a,b)}$, those whose minima occupy $\rho$ rows
and $\gamma$ columns and whose maxima occupy $\rho'$ rows and $\gamma'$ columns
form a family, with $a=\rho\gamma$ and $b=\rho'\gamma'$. Its count is a product of
two one-dimensional walk counts, one per axis, nonzero only when $|\rho-\rho'|\le1$
and $|\gamma-\gamma'|\le1$. When nonzero, the count has degree $\rho+\rho'-2$ in
$m$ and $\gamma+\gamma'-2$ in $n$, so total degree
\[
  D=(\rho+\rho'-2)+(\gamma+\gamma'-2).
\]
For all $\rho,\gamma,\rho',\gamma'\ge1$,
\[
  (d-2)-D=(\rho-1)(\gamma-1)+(\rho'-1)(\gamma'-1)\ \ge\ 0.
\]
\end{lemma}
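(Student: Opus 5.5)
The plan is to reduce a separable height function to a pair of one-dimensional $\pm1$ walks, one per axis, and then read off the count, the degrees and the identity from Lemma~\ref{lem:binom}.

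First, by Lemma~\ref{lem:sep}, the envelope of a separable admissible configuration is $E_{A,c}(i,j)=\phi(i)+\psi(j)$.
\begin{itemize}
\item Along a column, $\phi(i+1)-\phi(i)=E_{A,c}(i+1,j)-E_{A,c}(i,j)\in\{\pm1\}$ by Lemma~\ref{lem:parityvalid}. So $\phi$ is a $\pm1$ walk on $P_m$, and likewise $\psi$ is a $\pm1$ walk on $P_n$.
\item The neighbour differences separate, as shown in the proof of Lemma~\ref{lem:sep}. Hence $(i,j)$ is a strict local minimum iff $i$ and $j$ are strict local minima of $\phi$ and $\psi$, and the same holds for maxima. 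Here endpoints count as extrema, since a boundary cell simply lacks the corresponding neighbour.
\item It follows that the minima form $R\times C$ and the maxima form $R'\times C'$, where $R'$ and $C'$ are the maxima sets of $\phi$ and $\psi$. Writing $\rho=|R|$, $\gamma=|C|$, $\rho'=|R'|$, $\gamma'=|C'|$ gives $a=\rho\gamma$ and $b=\rho'\gamma'$.
\end{itemize}

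Second, I will establish the bijection. Send each separable height function in the family to the pair $(\phi,\psi)$ taken modulo the simultaneous shift $(\phi+t,\psi-t)$. This is the same as a pair of walks each taken up to an additive constant, with the one remaining global constant fixed by $h(1,1)=0$.

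Conversely, take any walks $\phi$ on $P_m$ with $\rho$ minima and $\rho'$ maxima, and $\psi$ on $P_n$ with $\gamma$ minima and $\gamma'$ maxima. Their sum $\phi+\psi$, normalised at $(1,1)$, has unit edge differences, so it is a height function. Its extrema are the products described above, so it lies in the family. By Theorem~\ref{thm:envelope} it is the envelope of a unique admissible configuration, and that configuration is separable by definition.

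Hence the family's count is $f_{(\rho,\rho')}(m)\,f_{(\gamma,\gamma')}(n)$. By Lemma~\ref{lem:binom} this vanishes unless $|\rho-\rho'|\le1$ and $|\gamma-\gamma'|\le1$. When nonzero, the two factors are polynomials of exact degrees $\rho+\rho'-2$ in $m$ and $\gamma+\gamma'-2$ in $n$. Their product therefore has total degree $D$, with top-degree part the product of the two positive leading monomials.

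Third, the identity is pure algebra. Expanding gives
\[
(\rho-1)(\gamma-1)+(\rho'-1)(\gamma'-1)=\rho\gamma+\rho'\gamma'-(\rho+\rho'-2)-(\gamma+\gamma'-2)=d-D .
\]
Subtracting $2$ from both sides yields $(d-2)-D$ equal to the stated sum. Each summand is a product of nonnegative integers because all four parameters are at least $1$, so $(d-2)-D\ge0$.

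The step needing the most care is the bijection. One must check that the additive decomposition is unique up to the shift, so no height function is counted twice. One must also check that the boundary-extremum convention for walks in Lemma~\ref{lem:binom} matches exactly which grid cells are extrema. This is where the product characterisation of minima and maxima, including boundary cells, has to be verified explicitly. I would handle it by reading the four neighbour directions through the separated differences: a missing neighbour in the grid corresponds exactly to a missing neighbour of the walk endpoint on that axis.
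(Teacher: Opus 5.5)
Your approach matches the paper's. Lemma~\ref{lem:sep} splits the envelope as $\phi+\psi$. The family count is the product $f_{(\rho,\rho')}(m)\,f_{(\gamma,\gamma')}(n)$ of two walk counts from Lemma~\ref{lem:binom}, the per-axis degrees add, and the inequality is algebra. Your treatment of the bijection is more explicit than the paper's, which simply asserts that the two walks are chosen independently: you show the splitting is unique up to $(\phi+t,\psi-t)$, you prove the converse via Theorem~\ref{thm:envelope}, and you check the boundary-extremum convention.

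The one step that fails as written is the final algebra. Expanding gives
\[
(\rho-1)(\gamma-1)+(\rho'-1)(\gamma'-1)=\rho\gamma+\rho'\gamma'-(\rho+\rho'+\gamma+\gamma')+2 .
\]
Your middle expression $\rho\gamma+\rho'\gamma'-(\rho+\rho'-2)-(\gamma+\gamma'-2)$ carries $+4$ instead of $+2$. Your display therefore claims the sum equals $d-D$, which is false: at $\rho=\gamma=\rho'=\gamma'=1$ the sum is $0$ while $d-D=2$. Subtracting $2$ from both sides of that display would give $(d-2)-D$ equal to the sum minus $2$, not the stated identity. The correct expansion gives $d-(D+4)+2=(d-2)-D$ directly, since $D+4=\rho+\rho'+\gamma+\gamma'$, so no subtraction step is needed. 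With that fix the argument is complete.
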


\begin{proof}
The configurations counted by $N^{\mathrm{s}}_{(a,b)}$ encode height functions, hence are
admissible, so Lemma~\ref{lem:sep} presents each
as a pair $(\phi,\psi)$ with $E_{A,c}=\phi+\psi$. Here $\phi$ is a row walk on
$\{1,\dots,m\}$ and $\psi$ a column walk on $\{1,\dots,n\}$, both $\pm1$ walks by
the unit edge differences of $E_{A,c}$ (Lemma~\ref{lem:parityvalid}). The two
walks are chosen independently, with the normalisation $E_{A,c}(1,1)=0$ fixing the
additive constant of their sum. The row walk $\phi$ carries $\rho$ minima and
$\rho'$ maxima; the column walk $\psi$ carries $\gamma$ minima and $\gamma'$ maxima.
The family count is thus the product of the two walk counts of Lemma~\ref{lem:binom}.
These vanish unless $|\rho-\rho'|\le1$ and $|\gamma-\gamma'|\le1$, and otherwise have
degrees $\rho+\rho'-2$ and $\gamma+\gamma'-2$ with nonzero leading coefficients, so the
two degrees sum to the total degree. The inequality follows from the algebraic identity
\[
  (d-2)-D=\rho\gamma+\rho'\gamma'-(\rho+\rho'+\gamma+\gamma')+2
    =(\rho-1)(\gamma-1)+(\rho'-1)(\gamma'-1),
\]
using $a=\rho\gamma$, $b=\rho'\gamma'$, $d=a+b$, and
$D=\rho+\rho'+\gamma+\gamma'-4$. Both products are non-negative, so $(d-2)-D\ge0$.
\end{proof}

Equality $D=d-2$ holds exactly when $(\rho-1)(\gamma-1)=0$ and
$(\rho'-1)(\gamma'-1)=0$, the minima and maxima each collinear. Such families
are non-empty, so the bound is attained there.

\begin{theorem}[Separable case of the degree bound]\label{thm:sepcase}
For $d\ge5$, a separable family contributes total degree $d-2$ to
$N^{\mathrm{s}}_{(a,b)}$ only if it is single-side; every other separable family
contributes total degree at most $d-3$.
\end{theorem}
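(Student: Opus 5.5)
The plan is to read the theorem off the degree identity of Lemma~\ref{lem:dimid}. First, decompose $N^{\mathrm{s}}_{(a,b)}$ into the finitely many families indexed by $(\rho,\gamma,\rho',\gamma')$ with $\rho\gamma=a$ and $\rho'\gamma'=b$. Each family is a product of two walk counts, and Lemma~\ref{lem:dimid} gives its total degree exactly:
\[
D=(d-2)-(\rho-1)(\gamma-1)-(\rho'-1)(\gamma'-1).
\]
So a family reaches $D=d-2$ iff $(\rho-1)(\gamma-1)=0$ and $(\rho'-1)(\gamma'-1)=0$. Otherwise both products are nonnegative integers and one is positive, so $D\le d-3$. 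This settles the second clause once the first is established.

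The substance is to show that every nonzero family with both products zero is single-side. By Lemma~\ref{lem:sep}, $A=R\times C$ with $|R|=\rho$ and $|C|=\gamma$, and $(\rho-1)(\gamma-1)=0$ means $\rho=1$ or $\gamma=1$. Suppose $\rho=1$. Then the row walk $\phi$ has a single strict local minimum. By Lemma~\ref{lem:binom}, $|\rho-\rho'|\le1$, so $\rho'\in\{1,2\}$ (since $\rho'\ge1$).

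Next I will use $d\ge5$ to pin down $\rho'$. The walk $\phi$ on $P_m$ has $\rho+\rho'$ extrema, with endpoints counted and alternating.
\begin{itemize}
\item If $\rho'=2$, then $\phi$ runs max--min--max, so its unique minimum is interior. Then $(\rho'-1)(\gamma'-1)=0$ forces $\gamma'=1$, hence $\gamma\le2$, giving $a=\gamma\le2$ and $b=2$, so $d\le4$. This contradicts $d\ge5$.
\item Hence $\rho'=1$. Then $\phi$ has one minimum and one maximum, so it is monotone with its minimum at row $1$ or row $m$.
\end{itemize}
Therefore all apexes $R\times C=\{r\}\times C$ lie on a boundary row, and the configuration is single-side. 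The case $\gamma=1$ is the same argument with rows and columns exchanged, yielding a boundary column.

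It remains to check that the leading coefficient of each family is positive, so that "contributes total degree" is meaningful and there is no cancellation between families. This follows because each nonzero family count is a product of two walk counts with positive leading coefficients $(1+\delta)/(k)!$ by Lemma~\ref{lem:binom}.

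I expect the main obstacle to be the boundary bookkeeping at small $d$. I will verify carefully that $\rho'=2$ together with $\gamma'=1$ really forces $d\le4$, and that the monotone case places the unique minimum at an endpoint row rather than merely in some row. The threshold $d\ge5$ enters exactly there, since the $d=4$ separable family with minima in a single interior row is not single-side yet attains $D=d-2$.
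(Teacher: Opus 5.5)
Your proposal is correct and follows essentially the same route as the paper: you read $D=d-2$ off the identity of Lemma~\ref{lem:dimid}, reduce to $\rho=1$ by transposition, rule out $\rho'=2$ because it forces $\gamma'=1$, $\gamma\le2$ and hence $d\le4$, and in the remaining case $\rho'=1$ place the unique minimum of the monotone row walk on a boundary row. Your extra paragraph on positive leading coefficients is harmless but not needed: the statement is per family, and Lemma~\ref{lem:dimid} already gives the exact degree of each nonempty family.
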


\begin{proof}
By the degree identity of Lemma~\ref{lem:dimid}, $D=d-2$ holds exactly when
$(\rho-1)(\gamma-1)=0$ and $(\rho'-1)(\gamma'-1)=0$, that is when the minima and
the maxima are each collinear. The two cases below show that for $d\ge5$ the only
such family is single-side. By the $m\leftrightarrow n$ symmetry, assume $\rho=1$,
so all minima share one row.

\medskip\noindent\emph{Case $\rho'=1$.} The row walk has one strict local minimum
and one strict local maximum. Both endpoints of a $\pm1$ walk are strict local
extrema, and its extrema alternate, so a walk with only these two has them at its
endpoints and is monotone in between. Its minimum is then a boundary row. Since $\rho=1$, all $a$ minima lie on that row, so the family is single-side.

\medskip\noindent\emph{Case $\rho'\ge2$.} Then $\gamma'=1$, since $(\rho'-1)(\gamma'-1)=0$. Lemma~\ref{lem:binom}
gives the per-axis walk constraints $|\rho-\rho'|\le1$ on the row walk and
$|\gamma-\gamma'|\le1$ on the column walk. At $\rho=1$ the first gives $\rho'=2$,
and at $\gamma'=1$ the second gives $\gamma\le2$. Hence
$a=\rho\gamma=\gamma\le2$ and $b=\rho'\gamma'=2$, so $d=a+b\le4$, against
$d\ge5$.

\medskip\noindent
Only the first case survives, so $D=d-2$ forces the family to be single-side. Every
other separable family therefore has $D\ne d-2$; with $D\le d-2$ always
(Lemma~\ref{lem:dimid}) and $D$ an integer, $D\le d-3$. Its contribution to
$N^{\mathrm{s}}_{(a,b)}$ has degree $D$ when nonempty and is zero otherwise, so
total degree at most $d-3$.
\end{proof}

Conjecture~\ref{conj:G2} thus holds for the separable configurations counted by
$R_{(a,b)}$.
The restriction to $d\ge5$ is necessary. At $d=4$ the second case occurs
at $\gamma=2$, giving a separable family with two minima on an interior row, of
total degree $d-2=2$.

\subsection{The non-separable residual}
% REOPENED 2026-08-04: added the paragraph reaching d=8 via prop:converse rather
% than by enumerating families.
% Subsection 10.2 (non-separable residual) LOCKED 2026-07-12 (Adversary
% drafts-review PASS: E1' (subject-verb, retain existential 'some'), E2 ('any'
% ban), E3 (count-as-set sort fix, same pattern as 10.1 D1), E4 (false-universal
% -> 'some ... admit') applied + verified; N1/N2 left; claim-calibration honest
% (bound never claimed proven, 'certified through d=7'). ALL empirical numbers
% independently reproduced (3 enum tools): splits, deg N^ns<=d-3 (2/3/3/3/4),
% R_(3,4) deg 4, N^ns_(2,5) deg 3, d=4 (2,2)=4mn+4m+4n-36, census (3,3)->2
% (3,4)->3 (2,2)->0. Rubrics math-proofs, fit, english-usage, paper-writing,
% source-grounding, latex.)

Theorem~\ref{thm:sepcase} leaves exactly one part of Conjecture~\ref{conj:G2}
open, the non-separable bound $\deg N^{\mathrm{ns}}_{(a,b)}\le d-3$ for $d\ge5$.
For $a=2$, the cost of non-separability is visible locally. A non-separable pair
shares neither a row nor a column, so its two cones meet along a diagonal ridge.
Requiring exactly $b$ maxima pins the ridge length, coupling the two axes and
removing one degree of freedom. This recovers the Ridge Lemma of~\cite{Gupta2026companion} as
the two-apex case of the Maxima Criterion (Lemma~\ref{lem:maxima}).

For $a\ge3$, this localisation fails. A non-separable apex set still contains a diagonal
pair, but the pair need not meet along an exposed ridge. For instance, some
non-separable configurations with apexes on two parallel rows at four distinct
columns have cones meeting only along axis-parallel bisectors, with no diagonal
ridge. The degree drop is then a
global property of the configuration, the joint effect of the maxima constraint
across all $a$ cones, rather than the signature of a single local feature. This
is what a uniform argument must capture, and what makes the ridge-based method that
explains the $a=2$ drop insufficient on its own.
The envelope of Section~\ref{sec:envelope} is a weighted Voronoi diagram, and its
combinatorial types stratify the space of configurations in the tropical
framework, yet the bound is open even there.

Absent such a uniform argument, the residual is certified through $d=7$ by direct
enumeration. This enumeration reads the exact total degree of each family by finite
differences. The reading is faithful because each family contributes a nonnegative
count, so nonnegative top-degree parts cannot cancel and no family of degree $d-2$
can hide. No non-separable family
reaches $d-2$ through $d=7$, across the splits $(2,3)$ for $d=5$, $(2,4)$ and
$(3,3)$ for $d=6$, and $(2,5)$ and $(3,4)$ for $d=7$. At $d=7$, for instance,
the top-degree-$5$ part of $N_{(3,4)}$ is entirely the single-side contribution
(Proposition~\ref{prop:singleside}), so its remainder $R_{(3,4)}$ carries no degree-$5$
monomial. No separable family meets $a=2$, $b=5$ under the walk constraints of
Lemma~\ref{lem:dimid}, so every configuration counted by $N_{(2,5)}$ is
non-separable, and $N_{(2,5)}$ carries no degree-$5$ monomial either. Both $\deg R_{(3,4)}$ and $\deg N^{\mathrm{ns}}_{(2,5)}$ are therefore at most
$4=d-3$. The separable formula
$\rho+\gamma+\rho'+\gamma'-4$ is exact for separable families
(Lemma~\ref{lem:dimid}). It is observed to bound the degree of the non-separable
families as well, with strict inequality in every computed case. But it exceeds
$d-3$ for families with widely spread maxima, so it does not close the gap on
its own.

At $d=8$ the residual is reached from the other end, without inspecting the
families. The interpolation of Section~\ref{sec:data} determines $p_8$
without assuming the bound, and its top-degree part is exactly the single-side
contribution; every remainder must then fall short of degree $d-2$
(Proposition~\ref{prop:converse}, Corollary~\ref{cor:G2d8}). This route needs
only the top-degree part of a polynomial the construction already produces, so it
extends to any degree at which $p_d$ can be determined independently of the
bound. What it does not give is a uniform argument, since it must be rerun degree
by degree.

The residual statement is again special to $d\ge5$. At $d=4$, the
balanced split $(2,2)$ has a non-separable family of two apexes on
the two parallel horizontal sides at distinct columns. Its independent cross-axis
offsets supply an $mn$ top term, so it reaches total degree $d-2=2$. A census of apex
positions shows that some configurations with $a\ge2$ admit interior apexes, up
to $2$ for the split $(3,3)$ and up to $3$ for $(3,4)$. The residual is therefore
genuinely a degree statement rather than the claim that all apexes lie on the
boundary, which holds only at $d=4$.

% ===========================================================================
\section{Explicit polynomials and conjectures}\label{sec:data}
% REOPENED 2026-08-04: added cor:G2d8 applying prop:converse to the
% unconditionally determined p_8.
% Section 11 (sec:data) LOCKED 2026-07-12 (14 hardened prose edits: 4 prose
% colons C1-C4, 9 content parentheticals P1-P8+M1, 1 cross-section
% single-side-term reconciliation X1; two Adversary hardening passes +
% post-apply Adversary verification, all 14 clean, zero residual colons/parens).
% ===========================================================================

\subsection{Closed forms on the high region}\label{subsec:high}

Assembling $E_d$ from the envelope enumeration and interpolating on the high
region (Theorem~\ref{thm:poly}) gives the following closed forms. The two lowest
cases, $p_2\equiv4$ and
$p_3=4(m+n)-16$, are elementary, both of total degree $d-2$. The next two,
$p_4$ and $p_5$, recover the formulas of~\cite{Gupta2026companion}; $p_6$ and
$p_7$ are new; $p_8$, $p_9$ and $p_{10}$ extend the list via the transfer matrix
of Lemma~\ref{lem:ratGF}. The envelope-enumeration forms $p_2$--$p_7$ were
cross-checked against two independent computations, direct enumeration of
$\OFG(\Mm)$ on small grids and the transfer matrix.

The samples come from the transfer matrix in every case, but two interpolations
fit them under different assumptions.
Lemma~\ref{lem:uniform} bounds the degree in each variable by $d-2$
unconditionally, so the tensor grid whose rows and columns run over
$d-1,\dots,2d-3$ determines $p_d$ with no further hypothesis, leaving the total
degree to be read off rather than imposed. That route gives $p_8$, from the rows
$m=7,\dots,13$. The fitted polynomial carries no monomial of total degree above
$d-2$, so the total degree $d-2$ is observed and not assumed, and it agrees with $E_8$ at
fourteen further points off the grid. Fitting a symmetric form of total degree
$d-2$ instead needs fewer nodes, on rows no higher than
$\lfloor(3d-4)/2\rfloor$, but presupposes Conjecture~\ref{conj:G2}. That cheaper
route gives $p_9$ and
$p_{10}$, whose agreement at held-out nodes is accordingly a test of the
conjecture at those degrees rather than an independent check of the polynomial.
The forms $p_9$ and $p_{10}$ below are thus conditional on
Conjecture~\ref{conj:G2}, while $p_2,\dots,p_8$ are unconditional.
{\allowdisplaybreaks
\begin{align*}
  p_4 &= 2(m^2+n^2) + 6mn - 10(m+n) - 4,\\
  p_5 &= \tfrac{2}{3}(m^3+n^3) + 2(m^2+n^2) + 50mn - \tfrac{392}{3}(m+n) + 264,\\
  p_6 &= \tfrac{1}{6}(m^4+n^4) + \tfrac{1}{3}(m^3+n^3) + 38(m^2n+mn^2)\\
      &\quad{} - \tfrac{229}{6}(m^2+n^2) - 272mn - \tfrac{103}{3}(m+n) + 1176,\\
  p_7 &= \tfrac{1}{30}(m^5+n^5) + \tfrac{37}{3}(m^3n+mn^3) + 18m^2n^2
         + \tfrac{7}{6}(m^3+n^3) + 25(m^2n+mn^2)\\
      &\quad{} - 496(m^2+n^2) - \tfrac{3818}{3}mn + \tfrac{14354}{5}(m+n) - 904,\\
  p_8 &= \tfrac{1}{180}(m^6+n^6)\\
      &\quad{} - \tfrac{1}{60}(m^5+n^5) + 3(m^4n+mn^4) + \tfrac{14}{3}(m^3n^2+m^2n^3)\\
      &\quad{} + \tfrac{95}{36}(m^4+n^4) + \tfrac{91}{3}(m^3n+mn^3) + 210 m^2n^2\\
      &\quad{} - \tfrac{1399}{12}(m^3+n^3) - \tfrac{2780}{3}(m^2n+mn^2)\\
      &\quad{} - \tfrac{41609}{45}(m^2+n^2) - \tfrac{11822}{3}mn
        + \tfrac{125353}{5}(m+n) - 56788,\\
  p_9 &= \tfrac{1}{1260}(m^7+n^7)\\
      &\quad{} - \tfrac{1}{180}(m^6+n^6) + \tfrac{11}{15}(m^5n+mn^5)
        + (m^4n^2+m^2n^4) + 2m^3n^3\\
      &\quad{} + \tfrac{97}{180}(m^5+n^5) + \tfrac{7}{2}(m^4n+mn^4)
        + 99(m^3n^2+m^2n^3)\\
      &\quad{} - \tfrac{155}{36}(m^4+n^4) + \tfrac{449}{3}(m^3n+mn^3) - 400 m^2n^2\\
      &\quad{} - \tfrac{68852}{45}(m^3+n^3) - \tfrac{16657}{2}(m^2n+mn^2)\\
      &\quad{} + \tfrac{640274}{45}(m^2+n^2) + \tfrac{220166}{5}mn
        + \tfrac{4101458}{105}(m+n) - 368848,\\
  p_{10} &= \tfrac{1}{10080}(m^8+n^8)\\
         &\quad{} - \tfrac{1}{840}(m^7+n^7) + \tfrac{13}{90}(m^6n+mn^6)
           + \tfrac{4}{15}(m^5n^2+m^2n^5) + \tfrac{7}{18}(m^4n^3+m^3n^4)\\
         &\quad{} + \tfrac{67}{720}(m^6+n^6) - \tfrac{21}{20}(m^5n+mn^5)
           + \tfrac{91}{3}(m^4n^2+m^2n^4) + \tfrac{472}{9}m^3n^3\\
         &\quad{} + \tfrac{101}{30}(m^5+n^5) + \tfrac{2101}{18}(m^4n+mn^4)
           + \tfrac{3407}{18}(m^3n^2+m^2n^3)\\
         &\quad{} - \tfrac{421393}{1440}(m^4+n^4) - \tfrac{120535}{36}(m^3n+mn^3)
           - \tfrac{10004}{3}m^2n^2\\
         &\quad{} - \tfrac{562697}{120}(m^3+n^3) - \tfrac{159259}{5}(m^2n+mn^2)\\
         &\quad{} + \tfrac{455062963}{2520}(m^2+n^2) + \tfrac{19134272}{45}mn\\
         &\quad{} - \tfrac{137300147}{210}(m+n) - 885004.
\end{align*}}%
\noindent The top-degree coefficient matches $C(d)=4/(d-2)!$ of
Proposition~\ref{prop:singleside} for $d\ge3$. Only $d=4$ carries a mixed
top monomial (Table~\ref{tab:topdeg}).

At $d=8$ this reading is unconditional, and that is enough to settle the degree
bound there.

\begin{corollary}[The degree bound at $d=8$]\label{cor:G2d8}
Every remainder $R_{(a,b)}$ with $a+b=8$ and $a\le b$ has total degree at most
$5$. Equivalently, Conjecture~\ref{conj:G2} holds at $d=8$, the non-separable
splits included.
\end{corollary}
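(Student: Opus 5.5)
The plan is to apply Proposition~\ref{prop:converse} at $d=8$, so the work reduces to checking its two hypotheses for $E_8$ on the high region. Neither hypothesis may rely on Conjecture~\ref{conj:G2}.

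\emph{Step 1: existence and shape of $p_8$.} Theorem~\ref{thm:poly} gives existence, symmetry and the region $\{m,n\ge7\}$ unconditionally. It also gives per-axis degree at most $6$, through Lemma~\ref{lem:uniform} and Lemma~\ref{lem:interp}.

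\emph{Step 2: determine $p_8$ without the bound.} A polynomial of degree at most $6$ in each variable is fixed by its values on the $7\times7$ tensor grid $\{7,\dots,13\}^2$. I would take those $49$ values of $E_8$ from the transfer matrix of Lemma~\ref{lem:ratGF} and interpolate. The tensor interpolation needs only the per-axis bound, so the resulting $p_8$ is unconditional. The fourteen off-grid agreements serve as a consistency check only.

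\emph{Step 3: read off the hypotheses.} Inspect the displayed $p_8$ coefficient by coefficient:
\begin{itemize}
\item every monomial $m^in^j$ with $i+j\ge7$ has coefficient zero;
\item the degree-$6$ part is $\tfrac1{180}(m^6+n^6)$;
\item since $C(8)=4/6!=1/180$, this equals $C(8)(m^6+n^6)$.
\end{itemize}
Proposition~\ref{prop:converse} then gives $\deg R_{(a,b)}\le5$ for the splits $(1,7)$, $(2,6)$, $(3,5)$ and $(4,4)$. This includes the non-separable parts, since the proposition bounds the whole remainder. The polynomiality it needs comes from Lemma~\ref{lem:splitpoly}, whose region $\{m,n\ge15\}$ lies inside the high region, so the identity of polynomials used there holds.

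\emph{Main obstacle.} The mathematical argument is entirely the converse proposition; the real risk is the exactness of Step 2. The vanishing of all $15$ coefficients of total degree $7$ through $12$, and the exact value $1/180$, are claims about exact rational arithmetic on exact transfer-matrix counts. So the plan requires:
\begin{itemize}
\item computing $T_m(x)$ symbolically in $x$ for $m\le13$, restricted to the rotation-invariant block, keeping terms up to $x^8$ in exact integers;
\item checking that the onset $n\ge7$ used in the samples is the sharp one of Lemma~\ref{lem:uniform}, not an empirical one.
\end{itemize}
With those two points secured, the corollary follows from the displayed $p_8$ without further argument.
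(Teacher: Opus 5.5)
Your proposal is correct and follows the paper's own proof. Both determine $p_8$ from the $7\times7$ tensor grid $\{7,\dots,13\}^2$ using only the per-axis bound of Lemma~\ref{lem:uniform}, check that nothing survives above total degree $6$ and that the degree-$6$ part is $\tfrac{1}{180}(m^6+n^6)=C(8)\,(m^6+n^6)$, and then invoke Proposition~\ref{prop:converse}. One harmless slip: a polynomial of degree at most $6$ in each variable has $21$ monomials of total degree $7$ through $12$ ($12$ up to the $m\leftrightarrow n$ symmetry), not $15$, and all of them must vanish.
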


\begin{proof}
The tensor grid $m,n=7,\dots,13$ determines $p_8$ through
Lemma~\ref{lem:uniform} alone, with no appeal to Conjecture~\ref{conj:G2}. The
resulting polynomial carries no monomial of total degree above $6=d-2$, and its
total-degree-$6$ part is $\tfrac{1}{180}(m^6+n^6)$, which is $C(8)\,(m^6+n^6)$.
Proposition~\ref{prop:converse} applies.
\end{proof}

\begin{table}[!ht]
\centering
\begin{tabular}{@{}cccc@{}}
\toprule
$d$ & $C(d)=4/(d-2)!$ & Non-mixed top-degree part & Mixed top-degree part \\
\midrule
$2$ & ---              & $p_2\equiv 4$                & ---   \\
$3$ & $4$              & $4(m+n)$                    & ---   \\
$4$ & $2$              & $2(m^2+n^2)$                & $6mn$ \\
$5$ & $\tfrac{2}{3}$   & $\tfrac{2}{3}(m^3+n^3)$     & ---   \\
$6$ & $\tfrac{1}{6}$   & $\tfrac{1}{6}(m^4+n^4)$     & ---   \\
$7$ & $\tfrac{1}{30}$  & $\tfrac{1}{30}(m^5+n^5)$    & ---   \\
$8$  & $\tfrac{1}{180}$   & $\tfrac{1}{180}(m^6+n^6)$   & ---   \\
$9$  & $\tfrac{1}{1260}$  & $\tfrac{1}{1260}(m^7+n^7)$  & ---   \\
$10$ & $\tfrac{1}{10080}$ & $\tfrac{1}{10080}(m^8+n^8)$ & ---   \\
\bottomrule
\end{tabular}
\caption{Top-degree parts of $p_d$. The constant $p_2$ lies outside the
top-degree law, shown dashed. The rows $d=9,10$ presuppose
Conjecture~\ref{conj:G2} (Section~\ref{subsec:high}).}
\label{tab:topdeg}
\end{table}

\subsection{The sub-top-degree coefficient}\label{subsec:subtop}

The degree bound of Conjecture~\ref{conj:G2} is the one open assumption the main
results rest on. The further conjectures below refine the sub-leading and boundary
structure without bearing on $p_d$ or its degree. These are Conjectures~\ref{conj:subtop}
and~\ref{conj:c2} here, and Conjecture~\ref{conj:baxter} in the next subsection.

The sub-top pure coefficient obeys a per-split law.
Lemma~\ref{lem:splitedge} splits each $N_{(a,b)}$ into its single-side term and
an interior remainder $R_{(a,b)}(m,n) = N_{(a,b)}(m,n) - 2f_{(a,b)}(n) - 2f_{(a,b)}(m)$,
which carries that coefficient.

\begin{conjecture}[Sub-top pure coefficient, per split]\label{conj:subtop}
For every $d\ge5$ and every split $(a,b)$ with $a+b=d$ and $a\le b$, the
coefficient of $m^{d-3}$, equivalently $n^{d-3}$, in $R_{(a,b)}$
is
\[
  [m^{d-3}]\,R_{(a,b)}(m,n) \;=\;
  \begin{cases}
    \dfrac{8\,(1+\delta_{ab})}{(d-3)!} & \text{if } |a-b|\le 1, \\[2pt]
    0 & \text{if } |a-b|>1.
  \end{cases}
\]
\end{conjecture}

\begin{corollary}[Sub-top pure of $E_d$]\label{cor:subtop}
Under Conjecture~\ref{conj:subtop}, for every $d\ge5$,
\[
  [m^{d-3}]\,E_d(m,n) \;=\; \frac{2(7-d)}{(d-3)!}.
\]
\end{corollary}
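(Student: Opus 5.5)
We aim to write $E_d$ through \eqref{eq:Ed} and to read off the $m^{d-3}$ coefficient from each summand. By Lemma~\ref{lem:splitedge}, each $N_{(a,b)}$ is the single-side part $2f_{(a,b)}(n)+2f_{(a,b)}(m)$ plus $R_{(a,b)}$. Therefore
\[
  [m^{d-3}]\,E_d=\sum_{\substack{1\le a\le b\\ a+b=d}}(2-\delta_{ab})\Big(2\,[m^{d-3}]f_{(a,b)}(m)+[m^{d-3}]R_{(a,b)}\Big).
\]
The term $f_{(a,b)}(n)$ contributes no pure $m$ monomial, so it drops out. By Lemma~\ref{lem:binom} and Conjecture~\ref{conj:subtop}, both remaining pieces vanish unless $|a-b|\le1$. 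Among the splits with $a\le b$, only the balanced one $a=\lfloor d/2\rfloor$, $b=\lceil d/2\rceil$ survives, exactly as in the proof of Proposition~\ref{prop:singleside}.

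The first step is to extract the sub-leading coefficient of the binomial. Writing $k=d-2$, we have $\binom{n-2}{k}=\frac{1}{k!}\prod_{i=0}^{k-1}(n-2-i)$. The coefficient of $n^{k-1}$ is $-\frac{1}{k!}\sum_{i=0}^{k-1}(2+i)=-\frac{1}{k!}\cdot\frac{k(k+3)}{2}=-\frac{k+3}{2(k-1)!}$. Hence $[m^{d-3}]f_{(a,b)}(m)=-(1+\delta_{ab})\frac{d+1}{2(d-3)!}$. Next, Conjecture~\ref{conj:subtop} gives $[m^{d-3}]R_{(a,b)}=\frac{8(1+\delta_{ab})}{(d-3)!}$ for the balanced split.

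It remains to combine these. The bracket equals $(1+\delta_{ab})\frac{-(d+1)+8}{(d-3)!}=(1+\delta_{ab})\frac{7-d}{(d-3)!}$. Multiplying by the weight $2-\delta_{ab}$ and using $(2-\delta_{ab})(1+\delta_{ab})=2$ for both parities of $d$ gives $\frac{2(7-d)}{(d-3)!}$, the claim.

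The only care point is justifying that the coefficient of $m^{d-3}$ in $E_d$ is well defined and additive over the splits. This follows because, on a common high region, $E_d$ agrees with $p_d$ (Theorem~\ref{thm:poly}) and each $N_{(a,b)}$ and $R_{(a,b)}$ agrees with a polynomial (Lemma~\ref{lem:splitpoly}). Identity \eqref{eq:Ed} is therefore an identity of polynomials, and coefficients may be compared termwise. The arithmetic is routine, so we expect no real obstacle beyond that bookkeeping. As a sanity check, the result should match the data at $d=5$: there $[m^2]p_5=2$, and the formula gives $2\cdot2/2!=2$.
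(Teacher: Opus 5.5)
Your proposal is correct and follows essentially the same argument as the paper. You expand $E_d$ by \eqref{eq:Ed} and Lemma~\ref{lem:splitedge}, keep only the near-balanced split, combine $[m^{d-3}]\binom{m-2}{d-2}=-\tfrac{d+1}{2(d-3)!}$ with Conjecture~\ref{conj:subtop}, and collapse the weights via $(2-\delta_{ab})(1+\delta_{ab})=2$; your added remark that \eqref{eq:Ed} becomes a polynomial identity on a common high region (Theorem~\ref{thm:poly}, Lemma~\ref{lem:splitpoly}) is a harmless justification the paper leaves implicit.
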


\begin{proof}
By~\eqref{eq:Ed}, $E_d = \sum_{(a,b):\,a\le b}
(2-\delta_{ab})\,N_{(a,b)}$; expanding each $N_{(a,b)}$ by
Lemma~\ref{lem:splitedge} and extracting $[m^{d-3}]$,
\[
  [m^{d-3}]\,E_d = \sum_{(a,b)} (2-\delta_{ab})\bigl(
     [m^{d-3}]\,2f_{(a,b)}(m) + [m^{d-3}]\,R_{(a,b)}
  \bigr),
\]
since $f_{(a,b)}(n)$ contributes nothing to $[m^{d-3}]$. By Lemma~\ref{lem:binom}
the single-side count $2f_{(a,b)}(m)$ vanishes unless $|a-b|\le1$, so among the
$a\le b$ splits only the near-balanced one contributes; there
$2f_{(a,b)}(m)=2(1+\delta_{ab})\binom{m-2}{d-2}$. Expanding the falling factorial,
\[
  [m^{d-3}]\binom{m-2}{d-2}=-\frac{1}{(d-2)!}\sum_{k=2}^{d-1}k
    =-\frac{(d-2)(d+1)}{2\,(d-2)!}=-\frac{d+1}{2(d-3)!},
\]
so $[m^{d-3}]\,2f_{(a,b)}(m)=-(1+\delta_{ab})(d+1)/(d-3)!$; weighting by
$(2-\delta_{ab})$ gives $-2(d+1)/(d-3)!$.
The $R$ term, by Conjecture~\ref{conj:subtop}, contributes
$(2-\delta_{ab})(1+\delta_{ab})\cdot 8/(d-3)! = 16/(d-3)!$. Summing,
$16/(d-3)! - 2(d+1)/(d-3)! = (14-2d)/(d-3)! = 2(7-d)/(d-3)!$.
\end{proof}

Combining Corollary~\ref{cor:subtop} with Lemma~\ref{lem:splitedge} and
Lemma~\ref{lem:binom} gives, for every $d\ge5$,
\[
  p_d(m,n) = 4\binom{m-2}{d-2} + 4\binom{n-2}{d-2}
           + \frac{16}{(d-3)!}\bigl(m^{d-3}+n^{d-3}\bigr)
           + \tilde R_d(m,n),
\]
where the residual $\tilde R_d$ has total degree at most $d-3$ under
Conjecture~\ref{conj:G2}, and unconditionally for $d\le8$. By
Corollary~\ref{cor:subtop} it carries no pure $m^{d-3}$ or $n^{d-3}$ monomial. The
single-side term is proved; the sub-top pure layer follows from
Corollary~\ref{cor:subtop}; the residual $\tilde R_d$ is uncharacterised.

\begin{remark}[Explicit residuals at small $d$]
For the first few cases:
{\allowdisplaybreaks
\begin{align*}
  \tilde R_5(m,n) &= 50\,mn - 148(m+n) + 296,\\
  \tilde R_6(m,n) &= 38(m^2n+mn^2) - 50(m^2+n^2) - 272\,mn\\
                   &\quad{} - \tfrac{26}{3}(m+n) + 1136,\\
  \tilde R_7(m,n) &= \tfrac{37}{3}(m^3n+mn^3) + 18\,m^2n^2
                      + 25(m^2n+mn^2) - 4(m^3+n^3)\\
                   &\quad{} - \tfrac{1430}{3}(m^2+n^2) - \tfrac{3818}{3}mn
                      + 2836(m+n) - 856.
\end{align*}}%
Each has total degree exactly $d-3$. The mixed top layer is nonzero in every
case, namely $50\,mn$, $38(m^2n+mn^2)$, and $\tfrac{37}{3}(m^3n+mn^3) + 18\,m^2n^2$. The
pure-power constraint of Conjecture~\ref{conj:subtop} does not reach these mixed terms.
\end{remark}

Numerically, the rescaled coefficient $(d-3)!\,[m^{d-3}]E_d(m,n) = 2(7-d)$ is
linear in $d$, verified at $d=5,\dots,10$ by the transfer matrix of
Lemma~\ref{lem:ratGF}, and for $d\le7$ independently by exact-integer finite
differences on samples of $p_d$. The vanishing at non-near-balanced splits underlying
Conjecture~\ref{conj:subtop} was verified at $d=5,6,7,8$ across all $(a,b)$
splits by the transfer matrix extended to track $(\#\mathrm{min},
\#\mathrm{max})$ separately. The type is detected by the colour-rotation-invariant
rule $c\equiv c'+1\pmod3$ for a minimum versus $c\equiv c'-1\pmod3$ for a maximum, where $c$ is
the shared neighbour colour and $c'$ is the cell colour.

The transfer-matrix setting offers a second view of the sub-top coefficient. The generating
function of $E_d(m,n)$ in $n$ is rational with its only pole at $z=1$
(Lemma~\ref{lem:quotient}), of order at most $d-1$ (Lemma~\ref{lem:uniform}). Its Laurent expansion is
\[
  \sum_{n\ge2}E_d(m,n)\,z^n
   = \frac{c_1(m)}{(1-z)^{d-1}} + \frac{c_2(m)}{(1-z)^{d-2}} + \cdots,
\]
and Proposition~\ref{prop:leading} pins $c_1(m)=4$ uniformly in $m\ge2$.

\begin{conjecture}[Sub-leading Laurent coefficient]\label{conj:c2}
For every $d\ge5$ and every $m\ge2$,
\[
  c_2(m) \;=\; -4(d-4).
\]
\end{conjecture}

This was verified at $d=5,6,7,8$ across $m\in\{d-1,d,d+1,d+2,d+3\}$, twenty
cases in all, by exact-rational polynomial fit of $E_d(m,n)$.
Conjecture~\ref{conj:c2} splits into two components, that $c_2(m)$ is
$m$-independent and that its value is $-4(d-4)$.

\begin{proposition}[Laurent coefficient and the degree bound]\label{prop:laurentrel}
Under Conjecture~\ref{conj:G2}, the coefficient $c_2(m)$ is independent of $m$.
Moreover, Conjecture~\ref{conj:c2} implies Corollary~\ref{cor:subtop},
independently of Conjecture~\ref{conj:subtop}.
\end{proposition}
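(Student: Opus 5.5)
The plan is to translate the Laurent coefficients $c_1(m),c_2(m)$ into coefficients of $p_d$ in the variable $n$, using the binomial basis $\frac{1}{(1-z)^{k+1}}=\sum_{n\ge0}\binom{n+k}{k}z^n$. Fix $m\ge d-1$. By Lemma~\ref{lem:uniform}, $E_d(m,n)=p_d(m,n)$ for $n\ge d-1$, with degree at most $d-2$ in $n$. The finitely many values at $2\le n\le d-2$, together with the polynomial's values at $n<2$ that the sum omits, differ from the full series $\sum_{n\ge0}p_d(m,n)z^n$ by a polynomial in $z$. Such a polynomial does not affect the principal part at $z=1$.

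Next, expand $p_d(m,n)$ in $n$ as
$p_d(m,n)=c_1(m)\binom{n+d-2}{d-2}+c_2(m)\binom{n+d-3}{d-3}+(\text{lower binomials})$.
Comparing coefficients of $n^{d-2}$ and $n^{d-3}$ gives
\[
  c_1(m)=(d-2)!\,[n^{d-2}]p_d(m,n),\qquad
  c_2(m)=(d-3)!\,[n^{d-3}]p_d(m,n)-\frac{d-1}{2}\,c_1(m).
\]
The second identity uses $[n^{d-3}]\binom{n+d-2}{d-2}=\frac{1}{(d-2)!}\sum_{k=1}^{d-2}k=\frac{d-1}{2(d-3)!}$. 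Both right-hand sides are polynomials in $m$ by Theorem~\ref{thm:poly}, so these identities hold as polynomial identities in $m$.

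For the first claim, assume Conjecture~\ref{conj:G2}. Then $p_d$ has total degree $d-2$, and by Proposition~\ref{prop:leading} its top part is $C(d)(m^{d-2}+n^{d-2})$, with no mixed monomial. Any monomial $m^kn^{d-2}$ with $k\ge1$ exceeds total degree $d-2$, so $c_1(m)=(d-2)!\,C(d)=4$. A monomial $m^kn^{d-3}$ with $k\ge2$ also exceeds total degree $d-2$, and $k=1$ would be a mixed top monomial, which is excluded. Hence $[n^{d-3}]p_d$ is the constant pure coefficient, and $c_2(m)$ is independent of $m$ on $m\ge d-1$.

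The main obstacle is the range $2\le m\le d-2$, where $p_d$ no longer equals $E_d$. There I would use Proposition~\ref{prop:meta}: for fixed small $m$, $E_d(m,n)-p_d(m,n)$ is a polynomial in $n$. I would try to show this correction has degree at most $d-4$ in $n$, for example via the frozen-run contraction of Lemma~\ref{lem:contract}, so that it cannot reach the two top Laurent orders. If that bound fails, I would restrict the claim to $m\ge d-1$.

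For the second claim, Conjecture~\ref{conj:c2} gives $c_2\equiv-4(d-4)$, a constant. The pure coefficient $[n^{d-3}]E_d$ (equivalently $[m^{d-3}]$ by symmetry) is the $m^0$ term of $[n^{d-3}]p_d(m,n)$. I would obtain it by evaluating the polynomial identity above at $m=0$, using the $m^0$ term of $c_1$, which is the pure coefficient $(d-2)!\,C(d)=4$ from Proposition~\ref{prop:singleside} and Table~\ref{tab:topdeg}. This yields
\[
  [n^{d-3}]p_d=\frac{-4(d-4)+2(d-1)}{(d-3)!}=\frac{2(7-d)}{(d-3)!},
\]
which is Corollary~\ref{cor:subtop}. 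This route never uses the per-split Conjecture~\ref{conj:subtop}.

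The delicate point is that identifying the constant term of $c_1$ with $4$ needs the pure $n^{d-2}$ coefficient of each remainder to vanish. I would take this from Conjecture~\ref{conj:G2}, or unconditionally for $d\le8$ from Corollary~\ref{cor:G2d8}, and state that dependence explicitly.
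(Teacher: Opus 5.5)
Your argument is correct and is essentially the paper's. The binomial-basis expansion gives $[n^{d-3}]p_d(m,n)=\frac{(d-1)\,c_1(m)}{2\,(d-3)!}+\frac{c_2(m)}{(d-3)!}$. Conjecture~\ref{conj:G2} then forces every coefficient $[m^in^{d-3}]$ with $i\ge1$ to vanish and gives $c_1\equiv4$; the paper likewise takes $c_1=4$ from Proposition~\ref{prop:leading}, so you are right to state that dependence explicitly for the second claim. Finally, the $m^0$ term under Conjecture~\ref{conj:c2} gives $2(7-d)/(d-3)!$.

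The extension to $2\le m\le d-2$ cannot succeed, however. Table~\ref{tab:corr} gives $B_6^{(2)}$ and $B_7^{(2)}$ degree $d-3$ in $n$, which shifts $c_2(2)$ to $0\ne-8$ at $d=6$ and to $4\ne-12$ at $d=7$. So restricting to $m\ge d-1$ is necessary, not a fallback. That is also where the paper's proof implicitly works, since it reads $E_d$ as the polynomial $p_d$.
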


\begin{proof}
The Laurent expansion gives
\[
  [n^{d-3}]E_d(m,n) = \frac{(d-1)\,c_1(m)}{2\,(d-3)!} + \frac{c_2(m)}{(d-3)!},
\]
and $c_1(m)=4$ by Proposition~\ref{prop:leading}, so
$[m^i\,n^{d-3}]E_d = [m^i]\,c_2(m)/(d-3)!$ for every $i\ge1$.

\medskip\noindent
Under Conjecture~\ref{conj:G2}, the top-degree part of $p_d$ is the
single-side layer $C(d)(m^{d-2}+n^{d-2})$ by Proposition~\ref{prop:leading}, so
$[m^i\,n^{d-3}]E_d = 0$ for every $i\ge1$. At $i=1$ the monomial $m\,n^{d-3}$
has total degree $d-2$ but is mixed, while that layer carries only pure powers;
for $i\ge2$ it exceeds degree $d-2$. Hence $[m^i]\,c_2(m)=0$ for
every $i\ge1$, so $c_2(m)$ is constant in $m$.

\medskip\noindent
The $m$-free part of the expansion is
$[m^0\,n^{d-3}]E_d = 2(d-1)/(d-3)! + [m^0]\,c_2(m)/(d-3)!$; with
$c_2(m)=-4(d-4)$ from Conjecture~\ref{conj:c2} this equals $2(7-d)/(d-3)!$.
The symmetry $p_d(m,n)=p_d(n,m)$ then gives
$[m^{d-3}]E_d = [m^0\,n^{d-3}]E_d = 2(7-d)/(d-3)!$, which is
Corollary~\ref{cor:subtop}.
\end{proof}

\subsection{Boundary corrections below threshold}\label{subsec:below}

Below threshold, with $s=\min(m,n)<d-1$ the smaller dimension and $n$ the
larger, the count differs from $p_d$ by the correction
\[
  B_d^{(s)}(n) := E_d(s,n) - p_d(s,n).
\]
Lemma~\ref{lem:uniform}, applied at $m=s\ge2$, makes $E_d(s,\cdot)$ agree on
$\{n\ge d-1\}$ with a single polynomial in $n$, so $B_d^{(s)}(\cdot)$ is a
polynomial in $n$ on that range, as Table~\ref{tab:corr} records.
\begin{table}[!ht]
\centering
\begin{tabular}{@{}llll@{}}
\toprule
$d$ & $s$ & $B_d^{(s)}(n)$ & $\deg$\\
\midrule
$4$ & $2$ & $12-4n$ & $1$\\
$5$ & $2$ & $40$ & $0$\\
$5$ & $3$ & $32-8n$ & $1$\\
$6$ & $2$ & $\tfrac43n^3-62n^2+\tfrac{1538}{3}n-1056$ & $3$\\
$6$ & $3$ & $-6n^2+30n+80$ & $2$\\
$6$ & $4$ & $120-24n$ & $1$\\
$7$ & $2$ & $\tfrac23n^4-\tfrac{106}{3}n^3+\tfrac{1204}{3}n^2
            -\tfrac{1484}{3}n-3016$ & $4$\\
$7$ & $3$ & $6n^3-244n^2+2390n-6024$ & $3$\\
$7$ & $4$ & $-28n^2+196n+104$ & $2$\\
$7$ & $5$ & $528-88n$ & $1$\\
\bottomrule
\end{tabular}
\caption{Boundary corrections below threshold.}
\label{tab:corr}
\end{table}
Each correction has degree $d-1-s$, linear at $s=d-2$ and rising as $s$
decreases. The lone departure is $B_5^{(2)}$, a constant where the pattern gives
degree $2$. Every other pair computed follows the pattern, including each of
$s=2,\dots,6$ at $d=8$.

\begin{conjecture}[Baxter onset defect]\label{conj:baxter}
For every $d\ge4$, the correction $B_d^{(d-2)}(n)$ is linear in $n$
with leading coefficient $-4\,\mathrm{Bax}(d-3)$, where $\mathrm{Bax}(k)$ is the
$k$-th Baxter number $1,2,6,22,92,\dots$\footnote{\url{https://oeis.org/A001181}; the
listing there repeats the initial term, so $\mathrm{Bax}(k)$ is its $(k{+}1)$-th entry.}.
\end{conjecture}

The leading coefficients $-4,-8,-24,-88$ in Table~\ref{tab:corr} are $-4$ times
the Baxter numbers $1,2,6,22$. The transfer matrix of Lemma~\ref{lem:ratGF}
extends the agreement through $d=11$, matching the next four Baxter numbers
$92,422,2074,10754$. Since the Baxter numbers are positive,
Conjecture~\ref{conj:baxter} would make every $B_d^{(d-2)}(\cdot)$ nonzero. The count
would then depart from $p_d$ at the smaller dimension $s=d-2$, one step below
threshold, so the threshold $d-1$ of the closed form would be sharp for every $d$. No proof is known; Section~\ref{sec:discussion} makes the obstruction explicit through
a finite-support form of the underlying transfer object.

\begin{proposition}[Boundary-correction structure]\label{prop:meta}
For every $d\ge4$ and every $m,n$ with exactly one of them below $d-1$, the
count $E_d(m,n)$ differs from $p_d(m,n)$ by a polynomial in the larger
variable, determined by $\min(m,n)$.
\end{proposition}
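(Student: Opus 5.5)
By the symmetry $E_d(m,n)=E_d(n,m)$ and $p_d(m,n)=p_d(n,m)$ (Theorem~\ref{thm:poly}), it suffices to treat the case $m=s<d-1\le n$, where $s=\min(m,n)$ and $n$ is the larger variable. The aim is to show that $n\mapsto E_d(s,n)-p_d(s,n)$ agrees with a single polynomial in $n$ on $\{n\ge d-1\}$, and that this polynomial depends only on $s$ and $d$. The case $s=1$ falls outside the range $m,n\ge2$ of Lemma~\ref{lem:uniform}. There I would fall back on Lemma~\ref{lem:binom}: on $G_{1,n}$ a height function is a $\pm1$ walk, so $E_d(1,n)=\sum_{a+b=d}f_{(a,b)}(n)$ is a polynomial in $n$. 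If the statement intends $m,n\ge2$, this case can simply be excluded.

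The main step is to fix $s$ with $2\le s\le d-2$ and invoke Lemma~\ref{lem:uniform} at $m=s$. Its hypotheses are $d\ge3$ and $m\ge2$, and nothing in it requires $m$ to lie in the high region, because the transfer matrix $T_s(x)$ is defined for every $s\ge2$. The lemma makes $E_d(s,\cdot)$ agree on $\{n\ge d-1\}$ with a polynomial $q_{d,s}(n)$ of degree at most $d-2$. Separately, $p_d$ is a bivariate polynomial with rational coefficients, so its specialisation $p_d(s,n)$ is a polynomial in $n$. Their difference
\[
  B_d^{(s)}(n)=q_{d,s}(n)-p_d(s,n)
\]
is therefore a polynomial in $n$ on $\{n\ge d-1\}$. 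It depends only on $s$ and $d$, because $q_{d,s}$ is determined by $T_s(x)$ and the boundary vectors, and $p_d$ is fixed.

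One point needs care: Lemma~\ref{lem:uniform} must be applied at the fixed small $m=s$, not only on $H_d$. The onset $n\ge d-1$ in that lemma is uniform in $m\ge2$, and this uniformity is exactly what the argument uses. With it, the correction is a polynomial throughout the region where exactly one variable falls below $d-1$ (and $n\ge d-1$). It need not be merely eventually polynomial with an onset that depends on $s$.

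I expect this onset uniformity to be the only genuine obstacle. I would check that the numerator-degree bound $\deg M\le 2d-3$ in the proof of Lemma~\ref{lem:uniform} never used $m\ge d-1$. It relied only on Lemmas~\ref{lem:boundary}, \ref{lem:quotient} and~\ref{lem:frozenbdy}, each of which is stated for all $m\ge2$. Once that is confirmed, the proposition follows.
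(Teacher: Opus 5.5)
Your proposal is correct and follows the paper's own proof. For $s\ge2$ you apply Lemma~\ref{lem:uniform} at $m=s$ and subtract the polynomial $p_d(s,\cdot)$, and for $s=1$ you use Lemma~\ref{lem:binom}, which the paper sums explicitly to $E_d(1,n)=2\binom{n-2}{d-2}$. The uniform onset you want to double-check is already part of the statement of Lemma~\ref{lem:uniform}, which holds for every $m\ge2$. The case $s=1$ is genuinely covered by the proposition, so keep it rather than excluding it.
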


\begin{proof}
By the symmetry of $E_d$ and $p_d$, let $s=\min(m,n)$, so that $s<d-1$ and the
larger dimension is at least $d-1$. For $s\ge2$, Lemma~\ref{lem:uniform} makes
$E_d(s,\cdot)$ agree on $\{n\ge d-1\}$ with a single polynomial in $n$, and
$p_d(s,\cdot)$ is a polynomial in $n$, so the difference $B_d^{(s)}$ is one too,
and it depends on the pair only through $s$.

\medskip\noindent
For $s=1$ the grid is the path $P_n$, on which a height function is a $\pm1$
walk and the extrema are its strict local extrema, endpoints included. By
Lemma~\ref{lem:binom} the number of such walks with $a$ minima and $b$ maxima
is $(1+\delta_{ab})\binom{n-2}{d-2}$ when $|a-b|\le1$ and $0$ otherwise. Among
the splits of $d$ the surviving ones are the single balanced split for $d$ even
and the two near-balanced splits for $d$ odd, so either way the sum is
$E_d(1,n)=2\binom{n-2}{d-2}$, a polynomial in $n$ determined by $s$.
\end{proof}

The finitely many pairs with both dimensions below $d-1$, which arise only for
$d\ge4$, are covered by direct computation.

% ===========================================================================
\section{Discussion}\label{sec:discussion}
% Section 12 (sec:discussion) LOCKED 2026-07-12 (R1 display-punctuation fix
% "supported"->"It is supported" + R2 pronoun-naming "realises it"->"realises
% that subfamily"; Adversary-hardened drafts, all six math checks pass, zero
% residual prose colons/parentheticals across all four paragraphs).
% ===========================================================================

For every degree, the construction reduces the vertex count to a lattice-point
enumeration. A height function is encoded by an integer configuration
(Theorem~\ref{thm:envelope}), and the configurations with exactly $d$ strict
local extrema satisfy the linear Maxima-Criterion inequalities
(Lemma~\ref{lem:maxima}). Their number is therefore piecewise quasi-polynomial
in $(m,n)$ (Theorem~\ref{thm:quasipoly}), and on the high region
$m,n\ge\max(d-1,2)$ a single symmetric polynomial $p_d$
(Theorem~\ref{thm:poly}), explicit through $d=10$. The existence, symmetry,
region, and per-axis degree of $p_d$ are unconditional. Two features rest on
conjecture, and the construction turns each from a question about the flip graph
into one about the lattice-point count.

The first, whether the total degree stays $d-2$, is the degree bound
(Conjecture~\ref{conj:G2}). For each split the admissible configurations form a
Presburger family (Theorem~\ref{thm:quasipoly}). Fixing which
Maxima-Criterion inequalities are tight cuts it into subfamilies, each counted
by a quasi-polynomial whose total degree counts free axis parameters.
Lemma~\ref{lem:dimid} makes this explicit for the separable families. Total degree
$d-2$ then requires a subfamily with $d-2$ such parameters, which the bound
asserts no non-separable configuration supplies. Whether a subfamily reaches
that degree is fixed by which inequalities are tight, independently of whether a
height function realises it. The two directions are not symmetric. Excluding
every non-separable subfamily with $d-2$ free parameters would prove the bound,
whereas exhibiting one would not refute it unless a height function realises that subfamily.
The construction settles the bound where the count factors into independent row
and column contributions (Theorem~\ref{thm:sepcase}); classifying the
non-separable subfamilies is what remains open.

The second, where the count departs from $p_d$, is the boundary. Below the
threshold $m,n\ge\max(d-1,2)$, each count differs from $p_d$ by a correction
whose leading coefficients Conjecture~\ref{conj:baxter} identifies with the
Baxter numbers. These enumerate Baxter permutations and non-intersecting
lattice-path triples\footnote{\url{https://oeis.org/A001181}} and correspond to
plane bipolar orientations and quadrant walks~\cite{Felsner2011,BousquetMelou2019}. The obstruction to a proof
is the dependence on the number of rows. At fixed $m$, the correction has a
rational generating function from a finite transfer matrix, whereas the Baxter
numbers are $P$-recursive and surface only as $m$ grows. The boundary quantity
$\xi_k(m):=\mathbf u_1^{\!\top}(B_1Q)^kB_1\mathbf v_1$ is built from the
single-extremum boundary vectors $\mathbf u_1,\mathbf v_1$ and the column marker
$B_1$ of Section~\ref{sec:period}. Through $m=9$ it has the closed form
\[
  \xi_k(m)=
  \begin{cases}
    12 & k=0,\\
    12(m+2-k)\,\mathrm{Bax}(k) & 1\le k\le m-2,\\
    24\,\mathrm{Bax}(m-1) & k=m-1,\\
    0 & k\ge m.
  \end{cases}
\]
This form is verified at those widths rather than proved. It is supported on
$0\le k\le m-1$ at fixed $m$, so a single width shows only
$\mathrm{Bax}(1),\dots,\mathrm{Bax}(m-1)$ and no width carries the whole sequence;
the Baxter numbers emerge only across widths, in the difference
$\xi_k(m+1)-\xi_k(m)=12\,\mathrm{Bax}(k)$ for $1\le k\le m-2$. The rotation
quotient (Lemma~\ref{lem:quotient}) divides this increment by three to
$4\,\mathrm{Bax}(k)$, the magnitude at $k=d-3$ of the boundary correction's
leading coefficient $-4\,\mathrm{Bax}(d-3)$ (Conjecture~\ref{conj:baxter}). Before the quotient the
transfer object already exhibits this Baxter dependence, matching through $d=11$. A proof must
therefore control the passage from $m$ to $m+1$ rows, through a sign-reversing
involution realising the corrections as non-intersecting lattice paths, or a
functional equation for the row-refined generating function.

The two obstructions are internal to the enumeration, yet the degree sequence
itself carries a direct mechanical reading. The locally flat-foldable
mountain-valley
assignments of the Miura-ori are the candidate states of a reconfigurable
metamaterial~\cite{Zhai2021metamaterials}, and a face flip is a single
reconfiguration between them~\cite{Pratapa2021reprogrammable}. Mountain-valley
reassignment underlies the stable states of a Miura
stack~\cite{Liu2021miura} and the multistable, reprogrammable devices built from
it~\cite{Jamalimehr2022lockable,Liu2023cellular}. A vertex degree is then the
number of single-flip reconfigurations available to a state. For $d\ge5$, and
subject to the degree bound, the number of states with
$d$ reconfigurations grows like $m^{d-2}+n^{d-2}$. The single-side configurations
carry this top-degree part, and their apexes lie along one boundary line. The degree sequence thus records how
many locally flat-foldable states offer each number of single-flip
reconfigurations; which of
them a device realises is set by global foldability and by its energy landscape,
both outside the model.

% ===========================================================================
\section{Conclusion}\label{sec:conclusion}
% ===========================================================================

The Miura-ori flip-graph degree sequence was known only in special cases, each
by a separate argument. One construction computes it uniformly, giving, for every $d$,
the number of degree-$d$ vertices as a single symmetric
polynomial $p_d(m,n)$ on the region $m,n\ge\max(d-1,2)$. These polynomials are
explicit through $d=10$, with total degree $d-2$ subject to a degree bound. That
bound is proved whenever the count splits into independent row and column
factors, and it holds in every case through $d=8$. Two questions remain
open, whether the degree bound holds
beyond that split, and where the count first departs from $p_d$ below the region,
the boundary at which the Baxter numbers surface. The degree sequence is an
invariant of the Miura-ori design space, and one construction now reaches it at
every degree.

\section*{Acknowledgements}
Simon Watts, an independent researcher, is thanked for a valuable correspondence,
and for independently proving the degree bound at $d=8$
(Corollary~\ref{cor:G2d8})~\cite{Watts2026}.

\bibliographystyle{alpha}
\bibliography{references}

\newcommand{\etalchar}[1]{$^{#1}$}
\begin{thebibliography}{HMNTS22}

\bibitem[ADE{\etalchar{+}}20]{Akitaya2020faceflips}
Hugo~A. Akitaya, Vida Dujmovi\'c, David Eppstein, Thomas~C. Hull, Kshitij Jain,
  and Anna Lubiw.
\newblock Face flips in origami tessellations.
\newblock {\em Journal of Computational Geometry}, 11(1), 2020.
\newblock Preliminary version: arXiv:1910.05667.

\bibitem[Aur91]{Aurenhammer1991}
Franz Aurenhammer.
\newblock Voronoi diagrams: A survey of a fundamental geometric data structure.
\newblock {\em ACM Computing Surveys}, 23(3):345--405, 1991.

\bibitem[BMFR20]{BousquetMelou2019}
Mireille Bousquet-M{\'e}lou, {\'E}ric Fusy, and Kilian Raschel.
\newblock Plane bipolar orientations and quadrant walks.
\newblock {\em S\'eminaire Lotharingien de Combinatoire}, 81:Article B81l,
  2020.

\bibitem[BW03]{BarvinokWoods2003}
Alexander Barvinok and Kevin Woods.
\newblock Short rational generating functions for lattice point problems.
\newblock {\em Journal of the American Mathematical Society}, 16(4):957--979,
  2003.

\bibitem[BW22]{BogartWoods2022}
Tristram Bogart and Kevin Woods.
\newblock A plethora of polynomials: A toolbox for counting problems.
\newblock {\em The American Mathematical Monthly}, 129(3):203--222, 2022.

\bibitem[CHO{\etalchar{+}}25]{Christensen2025origami}
Lumi Christensen, Thomas~C. Hull, Emma O'Neil, Valentina Pappano, Natalya
  Ter-Saakov, and Kacey Yang.
\newblock The origami flip graph of the $2 \times n$ {Miura}-ori, 2025.

\bibitem[CJS22]{CriadoJoswigSantos2021}
Francisco Criado, Michael Joswig, and Francisco Santos.
\newblock Tropical bisectors and voronoi diagrams.
\newblock {\em Foundations of Computational Mathematics}, 22(6):1923--1960,
  2022.

\bibitem[CvdHJ09]{Cereceda2009mixing}
Luis Cereceda, Jan van~den Heuvel, and Matthew Johnson.
\newblock Mixing 3-colourings in bipartite graphs.
\newblock {\em European Journal of Combinatorics}, 30(7):1593--1606, 2009.

\bibitem[DS04]{DevelinSturmfels2004}
Mike Develin and Bernd Sturmfels.
\newblock Tropical convexity.
\newblock {\em Documenta Mathematica}, 9:1--27, 2004.

\bibitem[DSS05]{DevelinSantosSturmfels2005}
Mike Develin, Francisco Santos, and Bernd Sturmfels.
\newblock On the rank of a tropical matrix.
\newblock In Jacob~E. Goodman, J{\'a}nos Pach, and Emo Welzl, editors, {\em
  Combinatorial and Computational Geometry}, volume~52 of {\em MSRI
  Publications}, pages 213--242. Cambridge University Press, 2005.

\bibitem[FFNO11]{Felsner2011}
Stefan Felsner, {\'E}ric Fusy, Marc Noy, and David Orden.
\newblock Bijections for {Baxter} families and related objects.
\newblock {\em Journal of Combinatorial Theory, Series A}, 118(3):993--1020,
  2011.

\bibitem[GH14]{GineproHull2014counting}
Jessica Ginepro and Thomas~C. Hull.
\newblock Counting {Miura}-ori foldings.
\newblock {\em Journal of Integer Sequences}, 17(10):Article 14.10.8, 2014.

\bibitem[Gup26]{Gupta2026companion}
Chakshu Gupta.
\newblock Height functions on the $m \times n$ {Miura}-ori flip graph: degree
  sequence and diameter, 2026.

\bibitem[HMNTS22]{Hull2022maximal}
Thomas~C. Hull, Manuel Morales, Sarah Nash, and Natalya Ter-Saakov.
\newblock Maximal origami flip graphs of flat-foldable vertices: Properties and
  algorithms.
\newblock {\em Journal of Graph Algorithms and Applications}, 26(4):503--517,
  2022.

\bibitem[Hul03]{Hull2003counting}
Thomas~C. Hull.
\newblock Counting mountain-valley assignments for flat folds.
\newblock {\em Ars Combinatoria}, 67:175--187, 2003.

\bibitem[JMAP22]{Jamalimehr2022lockable}
Amin Jamalimehr, Morad Mirzajanzadeh, Abdolhamid Akbarzadeh, and Damiano
  Pasini.
\newblock Rigidly flat-foldable class of lockable origami-inspired
  metamaterials with topological stiff states.
\newblock {\em Nature Communications}, 13:1816, 2022.

\bibitem[LFXW21]{Liu2021miura}
Zuolin Liu, Hongbin Fang, Jian Xu, and K.~W. Wang.
\newblock A novel origami mechanical metamaterial based on {Miura}-variant
  designs: exceptional multistability and shape reconfigurability.
\newblock {\em Smart Materials and Structures}, 30(8):085029, 2021.

\bibitem[LFXW23]{Liu2023cellular}
Zuolin Liu, Hongbin Fang, Jian Xu, and K.~W. Wang.
\newblock Cellular automata inspired multistable origami metamaterials for
  mechanical learning.
\newblock {\em Advanced Science}, 10(34):2305146, 2023.

\bibitem[Miu94]{Miura1994map}
Koryo Miura.
\newblock Map fold a la {Miura} style, its physical characteristics and
  application to the space science.
\newblock In R.~Takaki, editor, {\em Research of Pattern Formation}, pages
  77--90. KTK Scientific Publishers, Tokyo, Japan, 1994.

\bibitem[PLVP21]{Pratapa2021reprogrammable}
Phanisri~P. Pratapa, Ke~Liu, Siva~P. Vasudevan, and Glaucio~H. Paulino.
\newblock Reprogrammable kinematic branches in tessellated origami structures.
\newblock {\em Journal of Mechanisms and Robotics}, 13(3):031004, 2021.

\bibitem[Wat26]{Watts2026}
Simon Watts.
\newblock The degree-eight case of the {Miura}-ori flip-graph degree bound.
\newblock Zenodo, 2026.
\newblock \url{https://doi.org/10.5281/zenodo.22073084}.

\bibitem[Woo15]{Woods2015}
Kevin Woods.
\newblock Presburger arithmetic, rational generating functions, and
  quasi-polynomials.
\newblock {\em The Journal of Symbolic Logic}, 80(2):433--449, 2015.

\bibitem[ZWJ21]{Zhai2021metamaterials}
Zirui Zhai, Lingling Wu, and Hanqing Jiang.
\newblock Mechanical metamaterials based on origami and kirigami.
\newblock {\em Applied Physics Reviews}, 8(4):041319, 2021.

\end{thebibliography}

\end{document}